\documentclass[12pt]{article}
\setlength{\parindent}{0pt}
\usepackage[english]{babel}
\usepackage{subfig}
\usepackage{hyperref,url, enumerate}
\usepackage[numbers,sort&compress]{natbib}
\usepackage{amsfonts}
\usepackage{mathrsfs} 
\usepackage{appendix}
\usepackage{mathtools}
\usepackage{amsmath}
\usepackage{amssymb}
\usepackage{amsthm}
\usepackage{bbm}
\usepackage{latexsym}
\usepackage{natbib}
\usepackage[mathscr]{euscript}
\usepackage{graphicx}
\usepackage{tikz}
\usepackage{pgfplots}
\usepackage{pgfplotstable}
\usetikzlibrary{arrows,positioning,chains,fit,shapes,calc}
\newtheorem{theorem}{Theorem}[section]
\newtheorem{lemma}[theorem]{Lemma}
\newtheorem{corollary}[theorem]{Corollary}
\newtheorem{proposition}[theorem]{Proposition}

\newtheorem{fact}[theorem]{Fact}
\newtheorem{example}[theorem]{Example}
\newtheorem{remark}[theorem]{Remark}

\newtheorem{conjecture}[theorem]{Conjecture}

\newtheorem{definition}[theorem]{Definition}

\pgfplotsset{compat=1.8}

\def\NN{{\mathbb N}}

\DeclareMathOperator{\diam}{diam}

\DeclareMathOperator*{\ex}{ex}

\def\epsilon{\varepsilon}
\let\eps=\varepsilon

\renewcommand{\subset}{\subseteq}
\renewcommand{\dots}{\ldots}

\thispagestyle{empty}
\title{Degree conditions for embedding trees}
\date{\today}

\author{Guido Besomi, Mat\'ias Pavez-Sign\'e\footnote{MPS was supported by CONICYT Doctoral Fellowship 21171132.}, and Maya Stein\footnote{MS is also affiliated to Centro de Modelamiento Matem\'atico, Universidad de Chile, UMI 2807 CNRS. MS acknowledges support by CONICYT + PIA/Apoyo a centros cient\'ificos y tecnol\'ogicos de excelencia con financiamiento Basal, C\'odigo AFB170001 and by Fondecyt Regular Grant 1183080.}\\ \ \\
Departamento de Ingenier\'ia Matem\'atica\\ Universidad de Chile\\  Beauchef 851\\ Santiago, Chile\\
\texttt{\{gbesomi, mpavez, mstein\}@dim.uchile.cl}
}

%
%

\begin{document}

\maketitle

\begin{abstract}
We conjecture that every $n$-vertex graph of minimum degree at least $\frac k2$ and maximum degree at least $2k$ contains all trees with $k$ edges as subgraphs. We prove an approximate version of this conjecture for trees of bounded degree and dense host graphs. 

Our result relies on a general embedding tool for embedding trees into graphs of certain structure. This tool also has implications on the Erd\H os--S\'os conjecture and the $\frac 23$-conjecture. We prove an approximate version of both conjectures for  bounded degree trees  and dense host graphs.
\end{abstract}

\newpage
\thispagestyle{empty}
\addtocontents{toc}{\protect\enlargethispage{3.4\baselineskip}}
\tableofcontents
\newpage

\section{Introduction}

A central problem in graph theory consists of determining which conditions a graph $G$ has to satisfy in order to ensure  it contains a given subgraph $H$. For instance, conditions on the average degree, the median degree or the minimum degree of $G$ that ensure that $H$ embeds in $G$ have been studied extensively. Classical examples include Tur\'an's theorem for  containment of a complete subgraph, or Dirac's theorem for containment of a Hamilton cycle. In this paper we will focus on degree conditions that ensure the embedding of all trees of some given size.

Given $k\in\mathbb{N}$, a greedy embedding argument shows that in a graph $G$ with minimum degree at least $k$ one may embed every tree with $k$ edges. Note that the bound on the minimum degree is tight, as one easily confirms considering the example given by the union of several disjoint copies of $K_k$ (the complete graph on $k$ vertices), which does not contain any tree with $k$ edges. Another example is given by any $(k-1)$-regular graph and the star $K_{1,k}$.

A classical conjecture of Erd\H{o}s and S\'os from 1963 suggests that it is possible to replace the minimum degree condition with a bound on the average degree. The bound is again tight by the examples given above. 
	\begin{conjecture}[Erd\H{o}s and S\'os~\cite{Erdos64}]\label{ESconj} Let $k\in\NN$. Every graph with average degree greater than $k-1$ contains every tree with $k$ edges as a subgraph.
		\end{conjecture}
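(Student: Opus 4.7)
The plan is to attempt an inductive proof on $k$. Assume the statement holds for all smaller values of $k$ and suppose, for contradiction, that $(G,T)$ is a counterexample with $|V(G)|$ minimal. First I would pass to an edge-minimal subgraph of $G$: after repeatedly deleting edges as long as the average degree stays above $k-1$, every remaining edge lies in a subgraph of average degree exceeding $k-1$, and a standard averaging argument converts this into the lower bound $\delta(G) \geq \lceil k/2 \rceil$. This puts $G$ into precisely the minimum degree regime targeted by the companion conjecture in the abstract, which is what makes greedy extensions viable. Using the greedy embedding recalled in the excerpt (minimum degree $k$ suffices), we also know that any dense enough induced subgraph already contains $T$, so we can assume no such subgraph exists.

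Next, pick a leaf $\ell$ of $T$ with unique neighbor $\ell'$ and set $T' := T - \ell$, a tree with $k-1$ edges. The natural idea is to embed $T'$ somewhere in $G$ and then map $\ell$ to an unused neighbor of the image of $\ell'$. The difficulty is that $T'$ has only $k-1$ edges, so invoking the inductive hypothesis requires finding a subgraph of $G$ of average degree strictly greater than $k-2$, and removing a vertex (or a handful of edges) need not preserve that density. A more robust approach is to apply Szemer\'edi's regularity lemma to $G$, obtain a reduced graph $R$ of essentially the same density, and locate inside $R$ a convenient structure (for instance a connected subgraph of large minimum degree, or a long matching) into which $T$ can be embedded piece by piece, using the regular pairs as building blocks and a bounded-degree hypothesis on $T$ to avoid overloading any cluster.

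The principal obstacle, and the reason the Erd\H{o}s--S\'os conjecture has remained open for over half a century, is the extremal regime: host graphs that resemble disjoint unions of cliques $K_k$, where the average degree hypothesis is nearly tight. In that regime the regularity-based strategy degenerates (the reduced graph is too sparse and the nonextremal argument no longer embeds $T$), and one must instead proceed by an explicit combinatorial argument, for example identifying a large clique where most of $T$ can be placed and handling the few branches reaching outside it via the high-degree vertices supplied by edge-minimality. A complete proof must unify this extremal case analysis with the regularity argument for dense host graphs, and it is precisely this unification that I expect to be the main barrier; it is also where the approximate results in the literature (including the bounded-degree, dense-host version promised in the abstract) stop short of the full conjecture.
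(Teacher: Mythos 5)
This statement is Conjecture~1.1 (the Erd\H{o}s--S\'os conjecture) -- it is labelled as a conjecture, not a theorem, and the paper contains no proof of it. The paper cites it as open (with an unpublished resolution for large $k$ announced by Ajtai, Koml\'os, Simonovits and Szemer\'edi) and only proves the \emph{approximate} version of Theorem~\ref{thm:ESap} for bounded-degree trees and dense host graphs. So there is no proof in the paper to compare your proposal against, and you should not attempt to present a proof of the full conjecture here.

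To your proposal itself: you are commendably honest that it is not a proof, and the strategic picture you paint is accurate. Your opening reduction is correct -- deleting vertices of degree below $k/2$ preserves average degree greater than $k-1$, so one may assume $\delta(G)\ge\lceil k/2\rceil$, which is exactly the reduction the paper itself invokes in the paragraph following Conjecture~\ref{ESconj}. But from there, every subsequent paragraph either assumes a hypothesis the conjecture does not grant (bounded $\Delta(T)$, density $\Omega(n)$ of $k$) or explicitly punts on the hard case (the nearly-extremal, union-of-cliques regime, and the required stability/absorption argument to glue the extremal and nonextremal cases). Those are precisely the reasons the conjecture remains open and precisely where Theorem~\ref{thm:ESap} and its later refinements stop short: the regularity approach degrades when $k=o(n)$ or when $T$ has unbounded degree, and no published argument handles the extremal regime in full generality. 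Your write-up identifies the barrier correctly; it does not, and cannot within the tools appearing in this paper, close it. The appropriate thing to say about this statement is simply that it is an open conjecture.
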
 
The Erd\H{o}s-S\'os conjecture is trivially true for stars, and holds for paths by a result of Erd\H{o}s and Gallai~\cite{Erdos1959}.  In the early 1990's Ajtai, Koml\'os, Simonovits and Szemer\'edi announced a solution of the Erd\H{o}s-S\'os Conjecture for large graphs. Nevertheless, the conjecture has received a lot of attention over the last two decades, see for instance~\cite{bradob, hax, sacwoz, goerlich2016}.\\

It is well known that in every graph of average degree greater than $k$ one can find a subgraph of minimum degree greater than $\frac k2$ and average degree greater than~$k$, by successively deleting vertices of too low degree. So, one can assume that the host graph from  the Erd\H os--S\'os conjecture has minimum degree greater than $\frac k2$.

In this direction, Bollob\'as~\cite{bollobas1978} conjectured in 1978 that any  graph on $n$ vertices and minimum degree at least $(1+o(1))\frac{n}{2}$ would contain every spanning tree with maximum degree bounded by a constant. This conjecture was proved by Koml\'os, S\'ark\"ozy and Szemer\'edi~\cite{KSS95} in 1995, giving one of the earliest applications of the Blow-up lemma. 
Csaba, Levitt, Nagy-Gy\"orgy and Szemer\'edi~\cite{CLNS10}  show in 2010 that actually,  a minimum degree of at least $\frac{n}{2}+c\log n$ suffices. 

In 2001, Koml\'os, S\'ark\"ozy and Szemer\'edi~\cite{KSS2001} improved their earlier result   in a different direction, showing that one can actually  embed spanning trees with maximum degree of order $O(\frac{n}{\log n})$. Let us state their result here.

\begin{theorem}[Koml\'os, S\'ark\"ozy and Szemer\'edi~\cite{KSS2001}]\label{thm:KSS} For all $\delta\in(0,1)$, there are $n_0$ and $c\in(0,1)$ such that the following is true for all graphs $G$ and for all trees $T$ with $|V(G)|=|V(T)|=n\ge n_0$.\\ If  $\Delta(T)\le c\frac{n}{\log n}$ and  $\delta(G)\ge (1+\delta)\frac{n}{2}$, then $T$ is a subgraph of $G$.
\end{theorem}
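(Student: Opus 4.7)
The plan is to apply Szemer\'edi's Regularity Lemma to $G$ and combine it with a careful tree-decomposition and a randomized greedy embedding. Fix a small constant $\epsilon \ll \delta$ and apply the Regularity Lemma to obtain an $\epsilon$-regular partition $V_0, V_1, \dots, V_\ell$ of $V(G)$ into an exceptional set $V_0$ and clusters of equal size. The reduced graph $R$, whose edges correspond to $\epsilon$-regular pairs of density at least some $d > 0$, inherits minimum degree at least $(1+\delta/2)\ell/2$, and hence contains a Hamilton cycle by Dirac's theorem; from this I would extract a \emph{connected matching}, that is, a perfect matching $M$ in $R$ whose pairs are linked through $R$ in a way that permits traversal. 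Each edge of $M$ will designate a pair of clusters into which a substantial sub-forest of $T$ is to be placed.

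Next, root $T$ at a suitable vertex and decompose it into a small ``skeleton'' $S$ together with a family of ``bushes'' of size at most $\epsilon n$ hanging off $S$. Embed the skeleton first, consistently with the cycle structure on $R$, so that each bush is assigned to a pair $(V_i, V_j)$ of clusters joined by an edge of $M$. Each bush is then embedded vertex by vertex via BFS: for a new tree-vertex $w$ with parent $u$ already mapped to some $x \in V_i$, choose an image for $w$ inside $V_j$ among the neighbors of $x$, avoiding previously used vertices and vertices whose usable degree has fallen below a safe threshold. The regularity of $(V_i, V_j)$ guarantees that all but an $\epsilon$-fraction of vertices in each cluster have near-expected degree to the other, so locally this step always succeeds.

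The main obstacle is handling the highest-degree vertices of $T$. A tree-vertex $v$ with $\Theta(n/\log n)$ children requires its image $x$ to still possess $\deg_T(v)$ unused neighbors in the opposite cluster at the moment $v$ is processed. To guarantee this I would maintain near-uniform usage across clusters throughout the embedding, and track for each vertex of $V(G)$ a ``degree budget'' into each neighboring cluster; a concentration argument (either second moment or Azuma-style on the randomized BFS) shows that for almost every vertex these quantities stay close to their expectations as the embedding progresses. The quantitative bound $\Delta(T) \leq c n / \log n$ is precisely what makes a union bound over the $n$ embedding steps go through: each step conditions on a degree requirement of up to $\Delta(T)$, and the $\log n$ factor of slack is exactly what accommodates this. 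Any leftover uncovered vertices in the final stage are absorbed using the Blow-up Lemma applied to the last layer of leaves and bushes, which completes the embedding of the spanning tree $T$ into $G$.
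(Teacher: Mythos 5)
The paper does not prove Theorem~\ref{thm:KSS}; it is quoted as a black box from Koml\'os, S\'ark\"ozy and Szemer\'edi~\cite{KSS2001}, so there is no ``paper's own proof'' to compare against. What I can assess is whether your sketch is a plausible route to the cited result, and broadly it follows the regularity/blow-up template that KSS indeed pioneered: regularize $G$, use $\delta(R)>|R|/2$ to obtain a Hamilton cycle in the reduced graph (hence a connected perfect matching), decompose $T$ into a skeleton plus small bushes, and funnel the bushes into matching pairs.

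There are, however, genuine gaps. First, you want to invoke the Blow-up Lemma for the final absorption, but the standard Blow-up Lemma requires the graph being embedded to have \emph{bounded} maximum degree, which $T$ does not ($\Delta(T)$ may be as large as $cn/\log n$); invoking it on ``the last layer of leaves and bushes'' does not sidestep this, since a leaf-layer under a vertex of degree $\Theta(n/\log n)$ is a star of that order and places exactly the kind of degree demand the Blow-up Lemma is not built to handle. You need a tailor-made absorption argument, and working that out is precisely where the $2001$ extension differs from the $1995$ bounded-degree case. Second, you never set up super-regularity nor say what happens to the exceptional set $V_0$; for a \emph{spanning} embedding every vertex of $G$ must be hit, so the vertices that are atypical (and hence discarded into $V_0$ to make the matching pairs super-regular) must be reincorporated deliberately, typically by pre-assigning them to serve as images of carefully chosen tree vertices. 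Third, the explanation of where $n/\log n$ comes from is not right as stated: framing it as ``$\log n$ slack to carry a union bound over $n$ steps'' would give a \emph{lower} bound on acceptable degrees (events of probability $e^{-\Omega(d)}$ are negligible only when $d\gtrsim\log n$), not the upper bound $\Delta(T)\le cn/\log n$ that the theorem asserts. The true source of that threshold is a balancing constraint between the total load that high-degree vertices impose on a cluster pair and the available room in it, and the concentration statement you gesture at (Azuma on the randomized BFS) needs to be formulated around that budget, not around a per-step failure probability of $e^{-\Omega(\deg_T(v))}$.
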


They also show that their bound on the maximum degree is essentially best possible~\cite{KSS2001}.

\smallskip

A natural question is whether a version of Theorem~\ref{thm:KSS} holds for trees that are not necessarily spanning. That is, one would replace $n$ by $k$ in the minimum degree condition for $G$, for some $k<n$,  and hope that $G$ would contain every tree with $k$ edges (or at least each such tree of bounded degree). Clearly, this cannot work, because of the example given before Conjecture~\ref{ESconj}, or in fact, one could consider the union of disjoint copies of~$K_{\ell}$, for any $\ell$ with $(1+\delta )\frac k2+1\le\ell\leq k$. 

However, we believe that if in addition to the minimum degree condition, we require $G$ to have at least one vertex of large degree, then  every tree with $k$ edges should be contained in $G$.   More precisely, we believe that the following holds. 

\begin{conjecture}\label{2k,k/2}Let $k\in\mathbb{N}$ and let $G$ be a graph of minimum degree at least $\tfrac{k}{2}$ and maximum degree at least $2k$. Then every tree with $k$ edges is a subgraph of $G$.
\end{conjecture}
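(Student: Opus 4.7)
The plan is to use the high-degree vertex as an anchor for the embedding, then extend greedily using the minimum-degree condition. Let $v^*\in V(G)$ with $d_G(v^*)\ge 2k$, and let $c$ be a centroid of $T$, i.e., a vertex such that each component of $T-c$ has at most $\lceil(k+1)/2\rceil$ vertices. Rooting $T$ at $c$ decomposes it into subtrees $T_1,\dots,T_d$ hanging off the $d=\deg_T(c)$ children of $c$, each having at most $k/2$ edges. Note $d\le k<2k$, which is precisely why the maximum-degree condition on $G$ is useful: there is enough room around $v^*$ to plant every child of $c$.

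I would embed $c$ at $v^*$ and then process the subtrees $T_1,\dots,T_d$ one at a time. For each $T_i$, I would pick a still-unused neighbor $u_i$ of $v^*$, embed the root of $T_i$ at $u_i$, and extend to the rest of $T_i$ greedily by iterating through $T_i$ in BFS order: whenever a new tree-vertex $t$ has parent $t'$ already mapped to some $g'\in V(G)$, I would map $t$ to a yet-unused neighbor of $g'$. Finding $u_i$ is easy since $v^*$ has $\ge 2k$ neighbors while the total embedding uses at most $k+1$ vertices, leaving a large buffer.

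The key technical step is the greedy extension. Since $\delta(G)\ge k/2$, each image $g'$ has at least $k/2$ neighbors, and one must argue that enough of them remain unused. Because every subtree $T_i$ has at most $k/2$ edges and the entire embedding uses at most $k+1$ vertices, a direct count is razor-tight but plausible; it can be tightened by processing the $T_i$ with the largest branching first (while the neighborhood of $v^*$ is still fresh), by choosing the centroid more carefully when $T$ has one with small degree, and by reserving a small pool of unused vertices as emergency extensions for deep parts of the tree.

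The main obstacle is exactly this greedy extension: the bound $\delta(G)\ge k/2$ is extremal, and even mild concentration of edges in $G$ (say if $G$ is close to a blow-up of a few dense blocks) can force a deep image $g'$ to have almost all of its neighbors already used, strangling the procedure. Overcoming this likely requires a structural analysis of $G$ -- splitting, for instance, according to whether the neighborhood of $v^*$ expands well or is concentrated, finding a secondary moderately-high-degree vertex to serve as an auxiliary anchor in the latter case, or invoking regularity and absorption machinery. This is presumably why the authors state Conjecture~\ref{2k,k/2} rather than proving it outright, and prove only an approximate version for bounded-degree trees in dense host graphs via their general embedding tool.
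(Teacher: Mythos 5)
This is an open conjecture in the paper; the authors do not prove it, and there is no paper proof to compare yours against. What the paper proves is only an approximate version (Theorem~\ref{main:1}) for bounded-degree trees in dense host graphs, via the regularity method and the key embedding lemma (Lemma~\ref{lem:superlemma}). Your proposal therefore cannot be ``correct'' in the sense of replacing a known proof, and indeed you candidly concede the crucial step.

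Your diagnosis of where the greedy breaks down is accurate, and it is worth being precise about it. The failure mode is not merely that the count is ``razor-tight'': even after anchoring $c$ at $v^*$, a single subtree $T_i$ with up to $k/2$ edges may have to be embedded into a part of $G$ where $\delta(G)\ge k/2$ gives no slack at all, and the $k/2$ neighbours of an image vertex $g'$ can all be occupied by the time $g'$ is reached. Worse, the minimum-degree bound does not force any expansion, so the greedy can be trapped inside a small dense block. The paper's Example~\ref{example1} makes this concrete: $G_{\varepsilon,k}$ consists of two complete bipartite graphs $K_{(1-\varepsilon)k,\,(1-\varepsilon)k/2}$ together with a vertex $x$ adjacent to the two large sides; here $\delta(G)\approx\frac{k}{2}$ and $\Delta(G)\approx 2k$ yet a suitable spider does not embed. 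This shows the bounds $\frac{k}{2}$ and $2k$ are essentially tight, so any correct argument must fully exploit \emph{both} degree conditions in a global way rather than locally and greedily. That is exactly why the authors, rather than proving the conjecture, prove an approximate version using regularity: they extract structural information about the reduced graph of $G-x$ (large component, bipartite vs.~nonbipartite, etc.), pick the cutvertex $z$ of $T$ more carefully (via Lemma~\ref{lem:cut1}, Lemma~\ref{lem:num}, and Proposition~\ref{prop:cut5} so that the colour classes of $T-z$ are balanced), and route the subtrees through regular pairs (Propositions~\ref{prop:bipartite}, \ref{prop:non bipartite}, \ref{prop:connected-cte}). Your final paragraph is the right summary; the proposal as written is an outline of an approach known not to work on its own, not a proof.
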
 

Conjecture~\ref{2k,k/2} is essentially tight due to the following example. 

\begin{example}\label{example1}
Given $\varepsilon>0$ and $k\in\mathbb N$, let $G_{\varepsilon,k}$ consist of two copies of the complete bipartite graph with parts of size $(1-\varepsilon)k$ and $(1-\varepsilon)\tfrac{k}{2}$, and one vertex that is adjacent to every vertex in the parts of size $(1-\varepsilon)k$. It is easy to see that $G_{\varepsilon,k}$ does not contain the tree $T_k$ consisting of $\sqrt{k}$ stars of size $\sqrt{k}$ whose centers are adjacent to the central vertex of $T_k$, provided that $k=k(\varepsilon)$ is sufficiently large.
\end{example}

\begin{figure}[h!]
	\centering	
	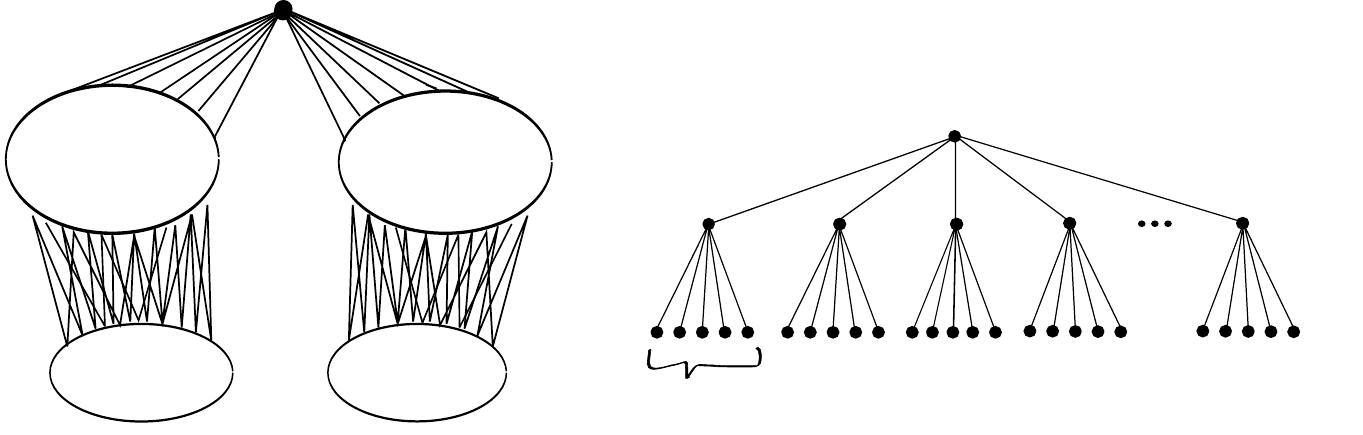\\
	\caption{Graph $G_{\varepsilon, k}$ and tree $T_k$ from Example~\ref{example1}.}
	\label{fig:extremal}
	\begin{picture}(0,0)
\put(-187,110){\colorbox{white}{$(1-\varepsilon)k$}}
\put(-91,110){\colorbox{white}{$(1-\varepsilon)k$}}
\put(-175,50){$(1-\varepsilon)\frac k2$}
\put(-96,50){$(1-\varepsilon)\frac k2$}
\put(-8,37){$\sqrt{k}-1$}
\end{picture}
\end{figure}

An even stronger argument for the tightness of Conjecture~\ref{2k,k/2} is given in~\cite{BPS3}\footnote{There we prove that  for all $\eps>0$ there are $k\in \mathbb N$, a $k$-edge tree~$T$, and a graph $G$ with $\delta(G)\geq  \frac k2$ and $\Delta (G)\geq 2(1-\eps)k$ with the property that $T\not\subseteq G$. The underlying example is very similar to Example~\ref{example1}, but we take a little more care when choosing the number and size  of the stars that make up the bulk of the tree $T_k$.}.

\medskip

 Observe that Conjecture~\ref{2k,k/2} trivially holds for both the star and the double star. Also, it is easy to see that it holds for paths: If the host graph $G$ has a $2$-connected component of size at least $k+1$, then by a well-known theorem of Dirac, this component contains a cycle of length at least $k$, and thus also a $k$-edge path (possibly using one edge that leaves the cycle). Otherwise, we can embed a vertex from the middle of the path into any cutvertex $x$ of $G$, and then greedily embed the remainder of the path into two components of $G-x$, using the minimum degree of $G$.

\smallskip

As more evidence for Conjecture~\ref{2k,k/2}, we prove an approximate version for trees of bounded degree and dense host graphs. 
\begin{theorem}\label{main:1} 
For every $\delta >0$ there is $n_0\in\NN$ such that for all $n\ge n_0$ the following holds for  every $k$ with $n\ge k\ge \delta n$.\\
If $G$ is an $n$-vertex graph of minimum degree at least $(1+\delta)\frac{k}{2}$ and maximum degree at least $2(1+\delta)k$, then $G$ contains every $k$-edge tree $T$ of maximum degree at most $k^{\frac{1}{67}}$ as a subgraph. 
\end{theorem}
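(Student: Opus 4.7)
My plan is to combine Szemer\'edi's Regularity Lemma with a balanced decomposition of the tree, in the spirit of the Koml\'os--S\'ark\"ozy--Szemer\'edi framework behind Theorem~\ref{thm:KSS} but adapted to the non-spanning, unbalanced setting required by Conjecture~\ref{2k,k/2}. The first step is to apply the Regularity Lemma to $G$ with parameters $\varepsilon\ll d\ll\delta$, producing a cluster partition $V_0\cup V_1\cup\cdots\cup V_t$ with $|V_0|\le\varepsilon n$, equal-sized clusters $V_1,\ldots,V_t$ of size $m\approx n/t$, and a reduced graph $R$ whose edges encode $\varepsilon$-regular pairs of density at least $d$. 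A standard clean-up should show that $R$ inherits a proportional minimum degree corresponding to $(1+\tfrac{\delta}{2})\tfrac{k}{2}$ covered neighbors in $G$, while the high-degree vertex $v^*$ lies in a cluster $V^*$ whose $R$-neighborhood covers at least $2(1+\tfrac{\delta}{2})k$ vertices of $G$.

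The second step is to decompose $T$ into two subtrees $T_1,T_2$ meeting at a single ``hub'' vertex $w$, with $|E(T_i)|\le(1+o(1))\tfrac{k}{2}$. For bounded-degree trees such a balanced decomposition is standard: one iterates the centroid construction (so that no remaining piece is too large) and regroups the pieces greedily into two roughly equal halves. The aim is then to embed $w$ at a hub vertex $x\in V(G)$ and each $T_i$ into a side $H_i\subseteq V(G)$, where $H_1,H_2$ are disjoint unions of clusters of total size at least $(1+\tfrac{\delta}{4})\tfrac{k}{2}+1$, the reduced graph $R[H_i]$ contains a matching rich enough to carry a $\tfrac{k}{2}$-edge tree, and $x$ has at least $\Delta(T)$ free neighbors in each $H_i$. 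The last condition will be easy because $\deg_G(v^*)\ge 2(1+\delta)k\gg 2\Delta(T)$, so one can set $x=v^*$ and split its neighborhood into two halves, each feeding one $H_i$. The existence of $H_1,H_2$ together with the requisite matchings inside should follow from the minimum-degree condition on $R$ via a standard greedy or K\"onig-type argument.

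The hard part will be the cluster-level embedding of each $T_i$ into $H_i$. Every placement of a vertex $u\in T_i$ into a cluster $V_j$ forces the children of $u$ to be placed in an $\varepsilon$-regular partner cluster $V_{j'}$, which requires up to $\Delta(T)\le k^{1/67}$ simultaneously unused common neighbors; this caps the usage of each cluster at a $(1-\varepsilon)$-fraction, and combining with the inevitable regularity losses one loses another $(1-o(1))$ factor. Reconciling this capacity loss with $|V(T)|=k+1$, which may be as large as $n$, while also treating the exceptional set $V_0$, the two halves $T_1,T_2$, and the hub's local neighborhood consistently, is delicate and will consume the slack $\delta$ in both degree hypotheses. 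This is precisely the \emph{general embedding tool} announced in the abstract; once it is set up flexibly enough to accept an unbalanced glueing $T=T_1\cup T_2$ at a hub vertex, Theorem~\ref{main:1} should follow by verifying that the two sides $H_1,H_2$ exist with the required properties.
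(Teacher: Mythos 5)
Your plan correctly identifies the key \emph{ingredients} (regularity, a cut vertex for $T$, sending the halves to two sides through the max-degree vertex), but the central decomposition step contains a genuine gap, and the embedding step is missing a colour-balance argument that turns out to be essential. Concretely:

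\textbf{The two-piece split at size $\frac k2$ is not achievable in general.} You assert that $T$ can be cut at a hub vertex $w$ into two subtrees (or regrouped forests) $T_1,T_2$ each with $|E(T_i)|\le(1+o(1))\tfrac k2$. This fails, for instance, for the tree consisting of a vertex $z$ adjacent to three subtrees of size $\tfrac k3$ each: after deleting $z$, any grouping of the pieces into two groups gives one group of size at least $\tfrac{2k}3$. The best one can guarantee after a $\tfrac t2$-separator is a $\tfrac 13$--$\tfrac 23$ split (this is exactly Lemma~\ref{lem:num}~(ii) in the paper, and the paper uses the matching three-piece partition from Lemma~\ref{lem:num}~(i) as well, noting via Remark~\ref{rem:F_3} that the third group is a single component). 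If you try to fix this by cutting deeper inside one of the three subtrees, you lose the property that all pieces of $T-z$ are attached to a single hub. So the ``split into two equal halves'' is not the right primitive.

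\textbf{The bipartite-component obstruction is not addressed.} When a maximum-degree vertex $x$ of $G$ $\sqrt\eps$-sees a component $C$ of the reduced graph and $C$ is bipartite with parts $A,B$, the part of $T$ destined for $C$ must distribute its proper $2$-colouring so that the colour class going to $B$ fits inside $B$. In the extremal situation (Example~\ref{example1}) $|B|$ is only about~$\tfrac k2$ while $|A|$ is about~$k$, and since $x$ may see only $A$, the roots of the pieces are forced into $A$; one then needs a cutvertex $z$ together with a $2$-colouring of $T-z$ whose smaller class is at most about~$\tfrac k2$ and whose larger class is at most about~$\tfrac{2k}3$. This is exactly what Proposition~\ref{prop:cut5} provides and what Lemma~\ref{lem:cut4} is not strong enough for. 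Your sketch jumps straight to a matching-based embedding without tracking colour classes, so in the bipartite regime the argument would overflow the small side.

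\textbf{What the paper actually does.} Rather than decomposing $T$ once into two halves, the paper isolates a \emph{library of favourable reduced-graph scenarios} in Lemma~\ref{lem:superlemma}: a large nonbipartite component, a bipartite component large enough for $T$'s colour classes, a bipartite component on which $x$ sees both sides, and several configurations where $x$ sees exactly two components. The tree is cut once at $z$ and the resulting components are grouped either into three families $\mathcal F_1,\mathcal F_2,\mathcal F_3$ or into two families $\mathscr J_1,\mathscr J_2$, and each grouping is matched to the shape of the reduced-graph scenario; the single-component family $\mathcal F_3$ provides the crucial parity flexibility to balance colours inside a bipartite component. Regularity is applied to $G-x$, not to $G$: otherwise $x$ would be buried in a cluster and its neighbourhood structure in the reduced graph would be hard to control. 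So while your broad strategy echoes the paper, the precise ``balanced decomposition'' you propose does not exist, and without the colouring machinery (Lemma~\ref{lem:num}, Proposition~\ref{prop:cut5}) and the case analysis of how $x$ meets components, the embedding would fail precisely at the extremal configurations.
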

 
Moreover, if  we consider trees whose maximum degree is bounded by an absolute constant, 
 we can improve the bound on the maximum degree of the host graph given by Theorem~\ref{main:1} as follows. 
 
\begin{theorem}\label{main:1-2}
For every $\delta >0$  and $\Delta\geq2$ there is $n_0\in\NN$ such that for all $n\ge n_0$ the following holds for every $k$ with $n\ge k\ge \delta n$.\\
 If $G$ is an $n$-vertex graph  of minimum degree at least $(1+\delta)\frac{k}{2}$ and maximum degree at least $2(\tfrac{\Delta-1}{\Delta}+\delta)k$, then $G$ contains every tree $T$ with $k$ edges and maximum degree at most $\Delta$ as a subgraph. 
\end{theorem}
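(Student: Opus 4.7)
The plan is to derive Theorem~\ref{main:1-2} from the general embedding tool referenced in the abstract, by verifying its hypotheses through a tree decomposition tailored to the bounded maximum degree of $T$ and by identifying an appropriate ``hub'' structure inside $G$.

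\textbf{Tree decomposition.} Let $v$ be a centroid of $T$, and let $T_1, \ldots, T_d$ be the subtrees of $T$ at $v$ (each $T_i$ consisting of a component of $T-v$ together with $v$ and the edge joining them), labelled so that $|E(T_1)| \geq \cdots \geq |E(T_d)|$. Since $\sum_i |E(T_i)| = k$ and $d \leq \Delta$, averaging gives $|E(T_1)| \geq k/\Delta$, while the centroid property gives $|E(T_1)| \leq (k+1)/2$. Setting $\mathcal{F}_1 := T_1$ and $\mathcal{F}_2 := T_2 \cup \cdots \cup T_d$ yields a decomposition of $T$ into two subtrees sharing only the vertex $v$ and satisfying
\[
\max\bigl(|E(\mathcal{F}_1)|, |E(\mathcal{F}_2)|\bigr) \;\leq\; \tfrac{\Delta-1}{\Delta}\, k + 1,
\]
where we use $\tfrac{\Delta-1}{\Delta} \geq \tfrac12$ (valid for $\Delta\geq 2$) to absorb the centroid bound.

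\textbf{Hub in $G$ and embedding.} Pick $v^* \in V(G)$ with $\deg_G(v^*) \geq 2(\tfrac{\Delta-1}{\Delta}+\delta)k$ and split $N(v^*)$ into two disjoint sets $X_1, X_2$, each of size at least $(\tfrac{\Delta-1}{\Delta}+\delta)k$. Embed $v$ at $v^*$; we then want, for each $i \in \{1,2\}$, to embed each child subtree contained in $\mathcal{F}_i$ with its $v$-neighbor mapped into $X_i$, and its remaining vertices embedded disjointly into $G \setminus \{v^*\}$. Applying the general embedding tool twice in succession --- once for $(\mathcal{F}_1, X_1)$ and once for $(\mathcal{F}_2, X_2)$, while forbidding in the second phase the vertices already used in the first --- extends the partial map $v \mapsto v^*$ to an embedding of $T$ into $G$. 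The minimum degree condition $\delta(G) \geq (1+\delta)k/2$ supplies the dense ambient substructure around each $X_i$ that the tool requires.

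\textbf{Main obstacle.} The subtle point is that each forest $\mathcal{F}_i$ may have up to $(\tfrac{\Delta-1}{\Delta} + o(1))k$ edges, which exceeds $k/2$ for $\Delta \geq 3$; so greedy embedding based on $\delta(G)$ alone cannot suffice. The general embedding tool must combine the Szemer\'edi regularity of $G$ with the hub $v^*$ and its neighborhood reservoir $X_i$ in order to absorb the largest child-subtree, while simultaneously reserving enough unused vertices for the parallel embedding of the second forest. Coordinating the two embedding phases so that they remain disjoint outside $v^*$ without exhausting either side is where the main technical weight of the argument lies; the bounded-degree hypothesis $\Delta(T)\leq\Delta$ enters again here, as it allows the embedding tool to process $T$ vertex-by-vertex using only $\Delta$-wise branching at each step.
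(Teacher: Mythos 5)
Your bound on the centroid split is correct: with $|E(T_1)|\ge k/\Delta$ and the centroid guarantee $|E(T_1)|\le\lceil k/2\rceil$, both $\mathcal F_1$ and $\mathcal F_2$ have at most $\tfrac{\Delta-1}{\Delta}k+O(1)$ edges. This mirrors the kind of split the paper performs inside its key lemma (there via Lemma~\ref{lem:num}), and assigning the cut vertex to a maximum-degree vertex $x$ of $G$ is also the right move. But from this point on the argument has a genuine gap.

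The step ``apply the general embedding tool once per $\mathcal F_i$'' does not follow from having two subforests of manageable total size. The tool (Lemma~\ref{lem:superlemma}) is conditional: one first has to prove that the reduced graph of $G-x$ and the neighbourhood structure of $x$ place you in one of the scenarios (I)--(IV), and this is where the actual content of the proof lies. The critical danger you never address is a bipartite component $C=(A,B)$ of the reduced graph in which $x$ sees only one side, say $A$. Then every root (that is, every child of the cut vertex) and hence every even level of the forest sent to $C$ must land in $A$, and the odd levels are forced into $B$. Since $\delta(G)\approx k/2$ only guarantees $|\bigcup B|\gtrsim k/2$, and since the colour class of a forest with $\frac{\Delta-1}{\Delta}k$ edges can be as large as $\bigl(\tfrac{\Delta-1}{\Delta}\bigr)^2 k$ — which exceeds $k/2$ for $\Delta\ge 4$ — the embedding simply does not fit, and your centroid split gives you no control over this. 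The minimum degree alone ``supplying the ambient substructure'' is wishful: it is exactly the wrong side of the bipartition that risks running out of room.

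The paper's proof of Theorem~\ref{main:1-2} avoids this scenario rather than solving it. It uses the assumption $\Delta(T)\leq\Delta$ to bound the larger colour class of $T$ by $\tfrac{\Delta-1}{\Delta}k$ (inequality~\eqref{bip_bound}), which rules out scenario (II) only if every bipartite component of the reduced graph has each side smaller than roughly $\tfrac{\Delta-1}{\Delta}k$ (inequality~\eqref{all_bip_small}). It then observes that $x$, having degree $\geq 2(\tfrac{\Delta-1}{\Delta}+\delta)k$, either sees a third component (scenario (IVa)), or sends at least $(\tfrac{\Delta-1}{\Delta}+\delta/2)k$ edges into one component $C_1$ (inequality~\eqref{deg_c1}). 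If $C_1$ is bipartite, it is $(\tfrac{\Delta-1}{\Delta}k,\sqrt[4]\eps)$-small, so $x$ cannot fit that many edges into a single side — hence $x$ must see \emph{both} sides, giving scenario (IVc). That degree-pigeonhole argument over the reduced graph, not the choice of tree split, is the crux, and it is absent from your proposal.
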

		
		The bounds on the maximum and minimum degree of $G$ are close to best possible, which can be seen by considering  Example~\ref{exampleDelta} in Section~\ref{sec:const}. 
		We also believe a version of Conjecture~\ref{2k,k/2}, with the maximum degree bound adjusted as in Theorem~\ref{main:1-2}, should hold for constant degree trees  (see Conjecture~\ref{conj:const} in Section~\ref{sec:const}).

\smallskip

This is not the first time a combination of a minimum and maximum degree condition has been proposed for replacing the average degree condition in the Erd\H os--S\'os conjecture. In 2016,  Havet, Reed, Stein, and Wood put forward the following conjecture.

\begin{conjecture}[$\frac 23$-conjecture; Havet, Reed, Stein and Wood~\cite{2k3:2016}]\label{23conj}
	Let $k\in\mathbb{N}$ and let $G$ be a graph of minimum degree at least $\lfloor{\frac{2k}{3}}\rfloor$ and maximum degree at least~$k$. Then every tree with $k$ edges is a subgraph of $G$. 
\end{conjecture}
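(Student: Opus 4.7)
The plan is to exploit the high-degree vertex $v^{*}$ with $\deg_{G}(v^{*})\geq k$ as an anchor, rooting the tree $T$ at a carefully chosen vertex $u$ and mapping $u\mapsto v^{*}$. Once $u$ is placed, the components of $T-u$ will be embedded one at a time, each rooted at a distinct neighbour of $v^{*}$, using the minimum degree condition $\delta(G)\geq\lfloor 2k/3\rfloor$ to extend the embedding greedily within each component. No assumption on $\Delta(T)$ or on $n$ relative to $k$ is made at any stage.

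More concretely, I first apply a centroid argument to choose $u\in V(T)$ so that every component of $T-u$ has at most $\lfloor k/2\rfloor$ vertices; call these components $T_{1},\dots,T_{r}$, and for each $i$ let $u_{i}\in V(T_{i})$ be the neighbour of $u$ in $T_{i}$. Write $N:=N_{G}(v^{*})$, so that $|N|\geq k$. The embedding proceeds by setting $\phi(u)=v^{*}$ and, for $i=1,2,\dots$, picking an as-yet-unused vertex $x_{i}\in N$, putting $\phi(u_{i})=x_{i}$, and extending $\phi$ across $T_{i}$ in breadth-first order. As long as fewer than $\lfloor 2k/3\rfloor$ vertices of $G$ have been used, each already-embedded vertex has more neighbours in $G$ than the number of used vertices, so a fresh target always exists and the extension succeeds greedily; moreover a fresh anchor $x_{i}$ is always available in $N$, since at most $k-1$ of its neighbours could have been consumed.

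The difficulty arises in the \emph{late} regime, once the number of used vertices approaches $k$: a vertex $\phi(w')$ whose child $w\in T_{i}$ we wish to embed may have all of its $\lfloor 2k/3\rfloor$ neighbours already occupied. To overcome this I would split into cases based on $T$. If $T$ has a vertex of degree at least $\tfrac{k}{3}$, then $u$ can be chosen to be or to be adjacent to it, so that $T-u$ consists of many small subtrees; most late-stage work then reduces to placing leaves of $T$ directly inside the $\geq\tfrac{k}{3}$ still-unused vertices of $N$, which is compatible with the embedding since every internal vertex of $T$ placed so far lies in $N_{G}(v^{*})$. If instead every vertex of $T$ has degree less than $\tfrac{k}{3}$, then $T$ is \emph{spread out} and I would aim for a rotation argument: whenever greedy extension at $\phi(w')$ is about to fail, one re-routes through an unused vertex $y\in N$ by locally re-embedding a short path inside the already-placed portion of $T$, exploiting both the abundance of starting points in $N$ and the absence of any local bottleneck in $T$.

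The main obstacle is precisely this second case: controlling the embedding in tight host graphs (with $n$ close to $k$) when $T$ has no strong centre. Here the naive greedy count genuinely breaks down, and a new ingredient seems required — for instance a potential function that forbids consuming too many neighbours of $v^{*}$ too early, or a stability argument that reduces the problem to host graphs very close to the extremal configuration (compare Example~\ref{example1}) on which the embedding can be finished by an \emph{ad hoc} construction. Because of this obstacle the conjecture remains open in general, and I expect that a complete proof will need to combine the embedding tool developed in this paper with a careful stability analysis near the extremal graphs, together with a delicate swapping procedure to handle the last $\tfrac{k}{3}$ vertices of $T$.
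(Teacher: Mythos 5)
The statement you are trying to prove is Conjecture~\ref{23conj}, which the paper does not prove and which remains open; the paper's contribution in this direction is only the approximate version, Theorem~\ref{main:2}, which assumes a dense host graph ($k\ge\delta n$), a bounded-degree tree ($\Delta(T)\le k^{1/49}$), and degree thresholds inflated by a factor $(1+\delta)$, and which is proved via the regularity method and the key embedding lemma (Lemma~\ref{lem:superlemma}) together with the tree-cutting results of Section~\ref{sec:cutT}. Your proposal, as you yourself concede in the final paragraph, is not a proof: the greedy phase only places about $\lfloor 2k/3\rfloor$ of the $k+1$ vertices of $T$, and the remaining $\approx k/3$ vertices are handled by machinery (a ``rotation argument,'' a ``potential function,'' a stability analysis) that is named but never constructed. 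That last third is exactly where the entire difficulty of the conjecture lives, so what you have is a restatement of the problem rather than a solution.

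Beyond the admitted incompleteness, one concrete step in your sketch is incorrect. In the case where $T$ has a vertex of degree at least $k/3$, you claim the late-stage work reduces to ``placing leaves of $T$ directly inside the $\ge k/3$ still-unused vertices of $N$, which is compatible with the embedding since every internal vertex of $T$ placed so far lies in $N_G(v^*)$.'' This fails twice: first, internal vertices of the components $T_i$ are embedded deep inside $G$, not into $N_G(v^*)$ (only the roots $u_i$ go there); second, even for a parent embedded inside $N_G(v^*)$, membership in $N_G(v^*)$ gives no adjacency to the unused vertices of $N_G(v^*)$ --- in the extremal graphs for this conjecture (a complete bipartite graph $K_{2k/3,2k/3}$ plus a universal vertex $v^*$, or two cliques of order $2k/3$ plus a universal vertex), $N_G(v^*)$ can contain an independent set of size $2k/3$, or split into two cliques neither of which can absorb two of the three stars of the tree from Section~\ref{4/3}. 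These examples show that any correct argument must detect the global structure of $G$ (bipartiteness and class sizes of the components seen by $v^*$), which is precisely what the paper's Lemma~\ref{lem:superlemma} does via the reduced graph and what a purely local greedy-plus-rotation scheme cannot do.
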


In~\cite{2k3:2016},  two variants of this conjecture are shown to be true: First, if the bound on the maximum degree  is replaced with a (large) function of~$k$; second, if the bound on the minimum degree  is replaced with  $(1-\gamma)k$, for a very small but explicit~$\gamma>0$. Moreover, Reed and Stein show in~\cite{RS18a, RS18b} that Conjecture~\ref{23conj} holds for large $k$, in the case of spanning trees (that is, if we additionally assume that $|V(G)|=|V(T)|=k+1$).

\smallskip

The tools developed for the proof of Theorem~\ref{main:1}, namely the key embedding result Lemma~\ref{lem:superlemma}, allow us to show  an approximate version of the $\frac{2}{3}$-conjecture for dense host graphs and trees with bounded maximum degree. 

\begin{theorem}\label{main:2} 
For every $\delta >0$ there is $n_0\in\NN$ such that for each $k$ and for each $n$-vertex graph $G$  with $n\ge n_0$ and $n\ge k\ge \delta n$  the following holds. If $G$ has minimum degree at least $(1+\delta)\frac{2k}{3}$ and maximum degree at least $(1+\delta)k$, then $G$ contains every $k$-edge tree of maximum degree  at most $k^{\frac{1}{49}}$. 
\end{theorem}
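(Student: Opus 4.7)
My plan is to derive Theorem~\ref{main:2} by invoking the general embedding tool Lemma~\ref{lem:superlemma}, developed for Theorem~\ref{main:1}, after feeding it a host-graph structure tailored to the hypotheses of the $\tfrac{2}{3}$-regime. The first step is to apply Szemer\'edi's regularity lemma to $G$ with density parameter $d=d(\delta)$ satisfying $d\ll \delta$, producing an $\varepsilon$-regular partition whose reduced graph $R$ inherits the bound $\delta(R)\geq (1+\delta')\tfrac{2}{3}|V(R)|$ for some $\delta'\in(0,\delta)$. Possibly after replacing the original high-degree vertex of $G$ by a typical one in its cluster, I may also assume that $v$ sits in a cluster $V_0\in V(R)$ having at least $(1+\delta')(k/n)|V(R)|$ cluster-neighbours.

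The key combinatorial step is a centroid-type decomposition of $T$: a standard greedy walk along a heaviest branch yields either \emph{(a)} a vertex $c\in V(T)$ all of whose components in $T-c$ have at most $k/3$ vertices, or \emph{(b)} an edge $xy\in E(T)$ whose removal splits $T$ into two subtrees of sizes both in $(k/3,2k/3)$. In case (a) I would embed $c$ at $v$ and apply Lemma~\ref{lem:superlemma} to grow the subtrees rooted at the children of $c$ into the regular structure surrounding $V_0$; the condition $\deg_G(v)\geq (1+\delta)k$ provides enough landing room for all these subtrees of total size at most $k$, while $\Delta(T)\leq k^{1/49}$ prevents any single cluster from being overloaded. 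In case (b) I would use the minimum-degree bound on $R$, together with a short pigeonhole, to locate a regular pair $(A,B)$ of cluster-sets with $|A|,|B|\geq (1+\delta'')\tfrac{k}{2}$ and high inter-density, and then apply Lemma~\ref{lem:superlemma} to embed the two subtrees across it, mapping $xy$ onto a regular edge between $A$ and $B$.

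The main obstacle is case (b): the two subtrees need not have exactly $k/2$ vertices but only sizes in $(k/3,2k/3)$, so the regular pair must asymmetrically accommodate them, and one cluster-side may approach $\tfrac{2k}{3}$. Finding such a pair consumes the minimum-degree bound $(1+\delta)\tfrac{2k}{3}$ essentially in full. In near-extremal configurations --- for instance graphs close to two disjoint quasi-cliques of order $\tfrac{2k}{3}$ glued by a single dominating vertex --- the high-degree vertex $v$ must itself serve as the bridge between the two halves, and the centroid cut in $T$ must be chosen to respect this. Carefully matching the tree decomposition to such host-graph structures, and verifying that all regularity and density hypotheses required by Lemma~\ref{lem:superlemma} persist throughout these cases, is where the technical weight of the proof will concentrate.
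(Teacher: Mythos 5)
Your plan misreads how Lemma~\ref{lem:superlemma} is meant to be invoked, and this creates a genuine gap. The lemma is a \emph{terminal} embedding tool: given a high-degree vertex $x$, a reduced graph $R$ of $G-x$, and a verification that $R$ satisfies one of the scenarios (I)--(IV), the lemma embeds the \emph{entire} tree $T$ in $G$; all of the tree-cutting (via Lemma~\ref{lem:cut1}, Lemma~\ref{lem:num} and Proposition~\ref{prop:cut5}) happens \emph{inside} the lemma's proof, tailored to whichever of the scenarios holds. Your proposal instead decomposes $T$ first --- via a centroid or a balanced edge-cut --- and then tries to feed \emph{subtrees} of $T$ into Lemma~\ref{lem:superlemma} and glue them around $v$. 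The lemma's hypotheses are stated for a $k$-edge tree against a host with minimum degree $(1+\sqrt[4]\varepsilon)\alpha k$ and a reduced graph with specific size conditions scaled by $k$; it does not provide a mechanism for embedding two pieces of sizes roughly $k/3$ and $2k/3$ across a prescribed regular edge, nor for anchoring a subtree at a given neighbour of $v$ while leaving room for the rest. Your case~(b), in particular, asks for a regular pair $(A,B)$ with $|A|,|B|\geq(1+\delta'')\tfrac{k}{2}$ and proposes to map the cut edge $xy$ onto a regular edge between them; with only $\delta(G)\geq(1+\delta)\tfrac{2k}{3}$, such a pair need not exist at all (the components might be two disjoint near-cliques of order $\approx\tfrac{2k}{3}$ joined only through $v$, as in the extremal examples of~\cite{2k3:2016} discussed in Section~\ref{resil}), so this step would fail without the further structural case analysis you have not supplied.

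The actual proof in the paper does not decompose $T$ at the top level. It fixes a maximum-degree vertex $x$, regularizes $G-x$, and then performs a short case analysis on the components of the reduced graph $R$ that $x$ sees --- how many, whether they are $(\cdot,\sqrt[4]\varepsilon)$-large, bipartite or not, and which side(s) $x$ sees --- using the minimum degree $\geq(1+\delta)\tfrac{2k}{3}$ and $\deg(x)\geq(1+\delta)k$ precisely to show that one of conditions~(\ref{cond:1}), (\ref{cond:2}), (\ref{cond:3}), (\ref{cond:4a}), (\ref{cond:4b}), (\ref{cond:4c}) or~(\ref{cond:4d}) must hold. That component analysis is the technical content of the derivation of Theorem~\ref{main:2}, and it is entirely absent from your plan. (It also explains where the exponent $49=18\lceil 2/\alpha\rceil-5$ with $\alpha=\tfrac{2}{3}$ comes from, which your outline does not address.) To repair your argument you would need to drop the external tree decomposition and instead carry out this verification of the lemma's hypotheses.
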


Even before reaching the main steps in our proof of Theorem~\ref{main:1}, we can use the first stepping stones of this proof to
deduce an approximate version of the Erd\H{o}s-S\'os conjecture, for  bounded degree trees and dense host graphs. 

\begin{theorem}\label{thm:ESap}
	For every $\delta>0$ there is $n_0\in\NN$ such that for each $k$ and for each $n$-vertex graph $G$  with $n\ge n_0$ and $n\ge k\ge \delta n$  the following holds. If  $d(G)>(1+\delta) k$, then $G$ contains every $k$-edge tree  of maximum degree  at most $k^{\frac{1}{67}}$ as a subgraph.	
\end{theorem}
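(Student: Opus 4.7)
My plan is to apply the standard regularity-based tree-embedding strategy, using only the lighter parts of the machinery (since the Erd\H os--S\'os regime does not guarantee a vertex of degree $\gg k$ in $G$ as in Theorem~\ref{main:1}). I would proceed in three stages: first reduce to large minimum degree; then apply Szemer\'edi's Regularity Lemma and extract a suitable structure in the reduced graph; finally embed the tree greedily into that structure.

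For the first stage I would use the classical iterative vertex deletion. As long as the current subgraph contains a vertex of degree less than $(1+\delta)k/2$, delete it. Because each deletion removes fewer than $(1+\delta)k/2$ edges, the average degree of the remaining subgraph stays strictly above $(1+\delta)k$. The process therefore ends with a non-empty subgraph $G'\subseteq G$ satisfying $\delta(G')\ge(1+\delta)k/2$, $d(G')>(1+\delta)k$, and in particular $|V(G')|>(1+\delta)k\ge(1+\delta)\delta n$, which is enough to apply the Regularity Lemma in a meaningful way.

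For the second stage I would apply the Regularity Lemma to $G'$ with parameters $\eps\ll d\ll\delta$, chosen so that $\eps^{-1}$ is much larger than $k^{1/67}=\Delta(T)$. After the usual clean-up --- discarding the exceptional set, irregular pairs, and pairs of density less than $d$ --- the remaining subgraph still has minimum degree at least $(1+\delta/2)k/2$, and the reduced graph $R$ inherits a proportional minimum degree. Using this degree condition on $R$, I would extract either a single $\eps$-regular pair with cluster sizes at least $(1+\delta/4)k/2$ on each side (directly hosting any tree whose colour classes fit), or a matching $M$ in $R$ whose associated regular pairs have aggregate capacity exceeding $(1+\delta/4)k$ and whose edges are linked through common neighbours in $R$ supplied by the minimum-degree hypothesis.

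For the final stage I would embed $T$ greedily in breadth-first order inside this structure: consecutive subtrees of $T$ go to consecutive regular pairs, alternating between the two sides of each pair according to the bipartition of $T$. Regularity keeps the candidate set for each new vertex non-trivial, since the number of already-embedded neighbours never exceeds $\Delta(T)\le k^{1/67}\ll\eps^{-1}$. The main obstacle, which is precisely where the paper's embedding lemma (Lemma~\ref{lem:superlemma}) does the heavy lifting, is the bookkeeping at the transitions between regular pairs: at each transition the image of a vertex of $T$ must have typical degree simultaneously into both pairs, and sufficient slack must be reserved in each cluster to complete the embedding without getting stuck. This works out because $\eps$ can be chosen much smaller than $k^{-1/67}$, so the small error accumulated at each transition is absorbed by the $\delta$-slack in the minimum-degree condition; the very unbalanced case of nearly-star trees is handled separately by using a vertex of degree at least $(1+\delta)k$ --- one of which exists in $G'$ since $\Delta(G')\ge d(G')>(1+\delta)k$ --- as the image of the high-degree vertex of $T$.
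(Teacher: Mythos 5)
Your stage-one reduction (iterative deletion to get $G'\subseteq G$ with $\delta(G')\ge(1+\delta)k/2$ and $d(G')>(1+\delta)k$), followed by regularisation and locating a suitably large connected piece of the reduced graph, is indeed the paper's route. But a genuine gap appears at the quantitative heart of the argument. You write that ``$\eps$ can be chosen much smaller than $k^{-1/67}$,'' but $\eps$ is chosen as a function of $\delta$ alone, before $n_0$ and hence $k$ are fixed: $\eps$ is a constant while $k^{-1/67}\to 0$, so that inequality cannot hold. More importantly, a naive transition argument (connecting pieces of $T$ by paths in the reduced graph) gives at best $\Delta(T)\le k^{1/\Theta(d)}$ where $d=\diam(R)$. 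Since $|R|$ can be as large as $M_0$ (the regularity constant), and $M_0\to\infty$ as $\delta\to 0$, this yields a $\delta$-dependent exponent, not the absolute constant $1/67$, and your sketch contains no mechanism for removing that dependence. (A minor further confusion: a single regular pair in the partition has cluster size $n/M_0\ll k$, so one cannot extract a regular pair with both sides of order $k/2$ directly from the regularity lemma.)

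The ingredient you are missing is Proposition~\ref{prop:connected-cte}, which the paper's proof invokes after finding a component $C$ of the reduced graph with $d(C)\ge(1+\delta/2)k\cdot|R|/|H'|$ that inherits $\delta(H'[C])\ge(1+\delta/2)k/2$. That proposition achieves the absolute exponent $c=18\lceil 2/\alpha\rceil-5$ (equal to $67$ at $\alpha=1/2$) by a two-track argument. If $|V(G)|$ is at most roughly $2k$, Theorem~\ref{thm:Erdos} bounds $\diam(R)$ in terms of $\alpha$ alone and Propositions~\ref{prop:bipartite} and~\ref{prop:non bipartite} apply directly. Otherwise one tentatively runs the piece-by-piece embedding using only paths of length at most $d_1=3\lceil 2/\alpha\rceil-2$; if this gets stuck at some cluster $C^*$, then by Lemma~\ref{lem:dist} the radius-$d_1$ ball around $C^*$ already contains more than twice the space needed for $T$, so one discards the partial embedding and re-embeds $T$ inside the radius-$(d_1+1)$ ball, whose diameter is now a constant depending only on $\alpha$. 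That ``try, fail at bounded radius, restart inside the bounded ball'' device is precisely what produces $1/67$ and is entirely absent from your proposal; your separate handling of unbalanced trees via a degree-$(1+\delta)k$ vertex of $G'$ plays no role in the paper's argument and does not supply it.
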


An improvement of this result has been obtained by Rozho\v{n}~\cite{rohzon}, and independently in~\cite{thesisGuido, BPS2}.

An outline of the proofs of  all our results will be given in  Section~\ref{outline}, while the actual proofs will be postponed to Section~\ref{improve} (Theorem~\ref{thm:ESap}) and to Section~\ref{s:mindegree} (Theorems~\ref{main:1}, ~\ref{main:1-2} and~\ref{main:2}). The proofs all rely on the regularity approach to embedding trees, and all but Theorem~\ref{thm:ESap} rely on a significant amount of additional work. This includes results on cutting and arranging trees, as well as a structural embedding result  for bounded degree trees, namely Lemma~\ref{lem:superlemma}. 

Our key embedding lemma,  Lemma~\ref{lem:superlemma}, is stated in a very general way, and covers a variety of possible host graphs. For this reason, we believe that it will be useful for future work on tree embeddings with minimum degree conditions.

	\smallskip
	
	In the Conclusion of this paper (Section~\ref{sec:conj}),  we provide some further results, conjectures and examples in the main direction of this paper. 
	
	In Section~\ref{ell} we generalise Conjecture~\ref{2k,k/2}, contemplating the whole range of values in $[k, 2k]$ for the maximum degree and in $[\frac k2, k]$ for the minimum degree of the host graph $G$. 
		 In Sections~\ref{4/3} and~\ref{2nd} we discuss extensions of Theorem~\ref{main:1} and of Conjecture~\ref{2k,k/2} to graphs that do not quite satisfy the degree conditions of the theorem, but instead fulfill a structural condition. Section~\ref{sec:const} 
		 discusses trees of constant maximum degree.
		  
		 In Sections~\ref{supersat} and~\ref{resil}, we discuss supersaturation for our theorems, and determine the extremal graphs for our results. These extremal graphs turn out to be the known examples for the Erd\H os--S\'os conjecture for Theorem~\ref{thm:ESap}, the known examples  for the $\frac 23$-conjecture plus 
Example~\ref{example1} 
 for Theorem~\ref{main:2}, and graphs as in  Example~\ref{example1} for Theorems~\ref{main:1} and~\ref{main:1-2}.

	\section{Outline of the proof}\label{outline}
	
	The aim of this section is  to give the general idea of the structure of the proof of our results. 
	For the understanding of the paper, it is not necessary to read the present section (but we hope it will be helpful).
	
	\medskip
	
	Before we explain the ideas let us give a very short overview of the structure of the paper.
Our main result, Theorem~\ref{main:1}, as well as Theorems~\ref{main:1-2} and~\ref{main:2} will be shown in Section~\ref{s:mindegree}. Their proofs rely on a structural embedding result, namely Lemma~\ref{lem:superlemma}, which is shown in Section~\ref{sec:general_lemma}. This lemma in turn relies on  results on tree-cutting from Section~\ref{sec:trees} and on tree embedding results from Sections~\ref{sec:proof1} and~\ref{improve}. The results from Sections~\ref{sec:proof1} and~\ref{improve} on their own already imply Theorem~\ref{thm:ESap}. In Section~\ref{sec:prelim}, we discuss some preliminaries (regularity and a matching result), which will be needed for Section~\ref{sec:proof1}.

	\bigskip
	
	Let us now expose the  general idea of the proof. As most of our results rely on our key embedding lemma, Lemma~\ref{lem:superlemma}, let us start by describing this lemma, which will be 
	 stated and proved 
in Section~\ref{sec:general_lemma}. 
		
		Lemma~\ref{lem:superlemma} provides an embedding of any tree~$T$ with maximum degree bounded by~$k^\frac{1}{c}$, where $c$ is a constant,\footnote{This is a different constant in the situation where the minimum degree is bounded by  $(1+\delta)\frac k2$, as in Theorem~\ref{main:1},  than in the situation where we have minimum degree at least $(1+\delta)\frac 23k$, as in Theorem~\ref{main:2}.}  into any host graph~$G$ of suitable minimum degree, as long as $G$ contains one of several favourable scenarios explicitly described in the statement of Lemma~\ref{lem:superlemma}. 
		
		The scenarios contemplated by  the lemma cover the situation where, after applying the regularity lemma\footnote{For an introduction to regularity, see Section~\ref{sec:regu}.}  to  $G$, the corresponding reduced graph has a large\footnote{That is, large enough to accommodate $T$.} component, but also cover a number of situations where there is no large component. In these latter situations, we will have to use a maximum degree vertex $x$ of~$G$, as well as a suitable cutvertex~$z$ of~$T$, and embed the components of $T-z$ into components of $G-x$. Several possible shapes and sizes of components possibly seen by $x$ are taken into account in Lemma~\ref{lem:superlemma}. We believe that this generality could make Lemma~\ref{lem:superlemma} very useful for future work on tree embeddings with minimum degree conditions.
		
Once we have Lemma~\ref{lem:superlemma}, the proof of  Theorems~\ref{main:1},~\ref{main:1-2} and~\ref{main:2} will be fairly easy.  We only need to regularise the host graph $G$, and show we are in one of the situations as described in Lemma~\ref{lem:superlemma}. This is done in Section~\ref{s:mindegree}. (Theorem~\ref{thm:ESap}, too, could be deduced from the lemma, but its proof already follows from the preparatory steps earlier in the paper.) 

So let us now sketch the proof of Lemma~\ref{lem:superlemma}.
	There are two crucial ingredients for the proof of Lemma~\ref{lem:superlemma}. One of these ingredients is some work that we accomplish  in Section~\ref{sec:cutT}. In that section, we prove some useful results on cutting trees, the most important ones being  Lemma~\ref{lem:num} and Proposition~\ref{prop:cut5}. These two auxiliary results allow us to cut a tree at some convenient cutvertex~$z$ and then group the components of $T-z$ into two or three groups (as necessary), so that the union of the components from these groups form sets of convenient sizes. Moreover, we show that it is possible to 2-colour $T-z$ in way that the resulting colour classes are not too unbalanced. This will be very important when, in the context of Lemma~\ref{lem:superlemma}, we wish to embed several components of $T-z$ into a single bipartite component of the reduced graph of $G-x$.
	
	The other crucial ingredient for the proof of Lemma~\ref{lem:superlemma} is the preparatory work accomplished in Sections~\ref{sec:proof1} and~\ref{improve}. There we show how to embed a tree into a host graph, that, 
	after an application of the regularity lemma yields a reduced graph with a large connected component. 
	For this, we cut the tree into tiny subtrees and few connecting vertices (a now standard procedure that is explained in the short Section~\ref{tree-cutting}), and then embed these trees into suitable edges of the reduced graph. 
	 The only remaining problem is how to make the connections between the tiny trees. 
	
	For these connections, we use paths in the reduced graph. For this argument to work, we have to bound the maximum degree of the tree we wish to embed in terms of  the diameter of the reduced graph of $G$. (Another argument will allow us to relax the bound later, see below.)
	Also, we have to distinguish two cases, namely whether the large component of the reduced graph is bipartite or not. The reason for this is that we need to fill the edges we embed our tiny tree into in a balanced way. The two cases will be treated in  Propositions~\ref{prop:bipartite} and~\ref{prop:non bipartite}, respectively. In the remainder of Section~\ref{sec:proof1}, we  deduce some corollaries from these propositions, which will come in handy later, when in the proof of 
	Lemma~\ref{lem:superlemma},
	 we need to embed parts of the tree into parts of the host graphs that correspond to different components of its reduced graph. 
	
	In Section~\ref{improve}, we unify and improve the results from Section~\ref{sec:proof1}. Namely, in Proposition~\ref{prop:connected-cte} we provide an embedding result for trees into large connected components of the reduced graph of $G$, where the bound on the maximum degree of the tree no longer depends on the diameter of the reduced graph of the host graph, but instead is $k^\frac{1}{c}$, where $c$ is an absolute constant. The idea for the proof of this result is that we first try to follow the embedding scheme from the previous section, but only using paths of bounded length for the connections. Should this fail, then the only possible reason is that we could not  reach suitable free space at a bounded distance from the cluster~$C$ we were currently embedding into. In this case, we abort our mission, and are able to prove that it is possible to embed the tree into a ball of appropriate radius centered at $C$.
	
	Also in Section~\ref{improve}, we show  our approximate version of the Erd\H os-S\'os conjecture for bounded degree trees,  Theorem~\ref{thm:ESap}. It  easily follows from the results obtained in Sections~\ref{sec:proof1} and~\ref{improve}.

	\section{Preliminaries}\label{sec:prelim}
\subsection{Notation}
Given $\ell\in\mathbb{N}$, we write $[\ell]=\{1,\dots,\ell\}$. Also, we will write $a\ll b$ to indicate that  given $b$, we choose $a$ significantly smaller. The explicit value for such $a$ can be calculated from the proofs.

We write $|H|=|V(H)|$ for the number of vertices  and $e(H)=|E(H)|$ for the number of edges of a graph $H$, and $\delta(H)$, $d(H)$ and $\Delta(H)$ are the minimum, average and maximum degree of $H$.  As usual, $\deg_H(x)$ is the degree of vertex $x\in V(H)$, and we write $N_H(x)$ for its neighbourhood in $H$,   $N_H(x,S)=N_H(x)\cap S$ for its neighbourhood in $S\subseteq V(H)$ and $\deg_H(x,S)$ for the respective degree. For disjoint sets $X,Y\subseteq V(H)$, we write $E_H(X,Y)$ for the set of edges $xy\in E(H)$ with $x\in X$ and $y\in Y$ and set $e_H(X,Y):=|E_H(X,Y)|$. 
In all of the above, we omit the subscript $H$ if it is clear from the context. 
 Given $U\subset V(H)$ we write $H[U]$ for the graph induced in~$H$ by the vertices in $U$, and we say a vertex $x$ {\em sees} $U$ if it sends an edge to $U$.

\subsection{The regularity lemma}\label{sec:regu}
In the present section, we discuss the notion of regularity and a few basic facts. Readers familiar with this topic are invited to skip the section.

Let $H=(A,B)$ be a bipartite graph with density $d(A,B):=\frac{e(A,B)}{|A||B|}$. For a fixed $\varepsilon>0$, the pair $(A,B)$ is said to be {\em $\varepsilon$-regular} if for any $X\subseteq A$ and $Y\subseteq B$, with $|X|>\varepsilon|A|$ and $|Y|> \varepsilon |B|$, we have that 
$$|d(X,Y)-d(A,B)|<\varepsilon.$$
Moreover, an $\varepsilon$-regular pair $(A,B)$ is called $(\varepsilon,\eta)$-regular if $d(A,B)>\eta$. Given an $\varepsilon$-regular pair $(A,B)$, we say that  $X\subseteq A$ is \textit{$\varepsilon$-significant} if $|X|> \varepsilon |A|$, and similar for subsets of $B$. A vertex $x\in A$ is called \textit{$\varepsilon$-typical} to a significant set $Y\subseteq B$ if $\deg(x,Y)> (d(A,B)-\varepsilon)|Y|$. We simply write \textit{regular}, \textit{significant} or \textit{typical} if $\varepsilon$ is clear from the context.

It it well known that regular pairs behave, in many ways, like  random bipartite graphs with the same edge density. 
The next well known fact (see for instance~\cite{regu}) states that in a regular pair almost every vertex is typical to any given significant set, and also that regularity is inherited by subpairs. 

\begin{fact}\label{fact:1}Let $(A,B)$ be an $\varepsilon$-regular pair with density $\eta$. Then the following holds:
\begin{enumerate}
 	\item For any $\varepsilon$-significant $Y\subseteq B$, all but at most $\varepsilon|A|$ vertices from $A$ are $\varepsilon$-typical to $Y$.
 	\item\label{fact:1,2} Let $\delta\in (0,1)$. For any subsets $X\subseteq A$ and $Y\subseteq B$, with $|X|\ge\delta|A|$ and $|Y|\ge\delta|B|$, the pair $(X,Y)$ is $\frac{2\varepsilon}{\delta}$-regular with density between $\eta-\varepsilon$ and $\eta+\varepsilon$.
\end{enumerate}

\end{fact}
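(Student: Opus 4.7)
The plan is to verify both parts by direct application of the definition of $\varepsilon$-regularity to carefully chosen subsets.

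For part~1, I would argue by contradiction. Suppose more than $\varepsilon|A|$ vertices of $A$ fail to be $\varepsilon$-typical to $Y$, and let $X\subseteq A$ be any collection of more than $\varepsilon|A|$ such vertices. Each $x\in X$ satisfies $\deg(x,Y)\leq(\eta-\varepsilon)|Y|$, so summing over $X$ gives $e(X,Y)\leq(\eta-\varepsilon)|X||Y|$ and hence $d(X,Y)\leq\eta-\varepsilon$. Since $|X|>\varepsilon|A|$ and $Y$ is $\varepsilon$-significant, $\varepsilon$-regularity of $(A,B)$ forces $|d(X,Y)-\eta|<\varepsilon$, a contradiction.

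For part~2, I would deduce both the density bound on $(X,Y)$ and the regularity of $(X,Y)$ by applying regularity of $(A,B)$ to appropriate subsets. For the density estimate, the hypotheses $|X|\geq\delta|A|$ and $|Y|\geq\delta|B|$ make $(X,Y)$ itself a pair of $\varepsilon$-significant subsets (we may assume $\delta\geq\varepsilon$, since otherwise $\frac{2\varepsilon}{\delta}\geq 2>1$ and the regularity claim is vacuous), so regularity of $(A,B)$ applied directly to $(X,Y)$ yields $|d(X,Y)-\eta|<\varepsilon$. To establish $\frac{2\varepsilon}{\delta}$-regularity, I would then take arbitrary $X'\subseteq X$ and $Y'\subseteq Y$ with $|X'|>\frac{2\varepsilon}{\delta}|X|$ and $|Y'|>\frac{2\varepsilon}{\delta}|Y|$. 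The lower bounds on $|X|$ and $|Y|$ force $|X'|>2\varepsilon|A|>\varepsilon|A|$ and likewise $|Y'|>\varepsilon|B|$, so regularity of $(A,B)$ supplies $|d(X',Y')-\eta|<\varepsilon$. Combined with the density bound on $(X,Y)$, the triangle inequality yields $|d(X',Y')-d(X,Y)|<2\varepsilon\leq\frac{2\varepsilon}{\delta}$, using $\delta\leq 1$, as desired.

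No step poses a serious obstacle, as the argument is essentially bookkeeping around the definition of regularity. The only subtlety is the interaction between $\delta$ and $\varepsilon$ in part~2: the factor of $2$ in $\frac{2\varepsilon}{\delta}$ is precisely what provides enough slack for subsets meeting the new regularity threshold on $(X,Y)$ to automatically satisfy the original threshold inherited from $(A,B)$.
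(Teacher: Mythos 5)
The paper states this fact without proof, citing a standard regularity survey, so there is no in-paper argument to compare against; your proof is the standard one and is correct. Both parts are verified by direct, careful application of the definition of $\varepsilon$-regularity: part~1 by contradiction on the set of atypical vertices, and part~2 by observing that $\delta$-significant subsets of $X$ and $Y$ are automatically $\varepsilon$-significant in $A$ and $B$, then using the triangle inequality via $\eta$. One small imprecision: in part~2 you dispose of the case $\delta<\varepsilon$ by noting the regularity condition becomes vacuous, but the \emph{density} estimate $\eta-\varepsilon<d(X,Y)<\eta+\varepsilon$ is not vacuous there and in fact can fail when $|X|\le\varepsilon|A|$; the statement implicitly assumes $\delta>\varepsilon$ (as it is throughout the paper, where $\delta$ is a fixed constant like $\tfrac12$ or $\tfrac13$), and your argument is complete under that reading.
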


The regularity lemma of Szemer\'edi~\cite{Sze78} states that, for any given $\varepsilon>0$, the vertex set of any large enough graph can be partitioned into a bounded number of sets, also called clusters, such that the graph induced by almost any pair of these clusters is $\varepsilon$-regular. 
We will make use of the well known degree form of the regularity lemma  (see for instance~\cite{regu}).
Call a vertex partition $V(G)=V_1\cup\dots\cup V_\ell$ an {\em $(\varepsilon,\eta)$-regular partition} if 
\begin{enumerate}
	\item $|V_1|=|V_2|=\dots=|V_\ell|$;
	\item  $V_i$ is independent for all $i\in[\ell]$; and
	\item for all $1\le i<j\le \ell$, the pair $(V_i,V_j)$ is $\varepsilon$-regular with density either $d(V_i,V_j)>\eta$ or $d(V_i,V_j)=0$.
\end{enumerate}

\begin{lemma}[Szemer\'edi's regularity lemma - Degree form]\label{reg:deg}
For all $\varepsilon>0$ and $m_0\in\NN$ there are $N_0, M_0$ such that the following holds for all $\eta\in[0,1]$ and $n\ge N_0$. Any $n$-vertex graph $G$ has a subgraph $G'$, with $|G|- |G'|\le \varepsilon n$ and $\deg_{G'}(x)\ge \deg_G(x)-(\eta+\varepsilon)n$ for all $x\in V(G')$, such that $G'$ admits an $(\varepsilon,\eta)$-regular partition $V(G')=V_1\cup\dots\cup V_\ell$, with $m_0\le \ell\le M_0$.
\end{lemma}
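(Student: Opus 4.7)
The plan is to deduce the degree form of the regularity lemma from the classical (standard) version of the lemma, which produces an equitable partition $V(G)=W_0\cup V_1\cup\cdots\cup V_\ell$ with $|W_0|\le\varepsilon' n$, common cluster size $|V_i|=m$, at most $\varepsilon'\binom{\ell}{2}$ non-$\varepsilon'$-regular pairs, and $m_0'\le \ell\le M_0'$, for parameters $\varepsilon', m_0'$ that we calibrate. I will build $G'$ by discarding three kinds of edges: edges inside a single cluster, edges in non-regular or low-density pairs, and edges incident to a slightly enlarged exceptional set.

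First I would set $\varepsilon' := (\varepsilon/8)^2$ and $m_0' := \max\{m_0,\lceil 8/\varepsilon\rceil\}$, and apply the classical regularity lemma with these inputs. Then call a cluster $V_i$ \emph{bad} if it is involved in more than $\sqrt{\varepsilon'}\,\ell$ non-regular pairs. Summing the quantity ``number of non-regular pairs at $V_i$'' over all $i$ counts each non-regular pair twice and is therefore at most $2\varepsilon'\binom{\ell}{2}\le\varepsilon'\ell^2$. A double-counting argument then bounds the number of bad clusters by $\sqrt{\varepsilon'}\,\ell$. Absorb all bad clusters into $W_0$ to form an enlarged exceptional set $W_0'$ with $|W_0'|\le(\varepsilon'+\sqrt{\varepsilon'})n\le\varepsilon n/4$; the remaining clusters still have equal size, they number $\ell'$ with $m_0\le\ell'\le M_0'$, and each is involved in at most $\sqrt{\varepsilon'}\,\ell$ non-regular pairs.

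Next, define $G'$ on vertex set $V(G)\setminus W_0'$ by deleting every edge inside some $V_i$, every edge belonging to a pair $(V_i,V_j)$ that is not $\varepsilon'$-regular, and every edge in a pair $(V_i,V_j)$ of density at most $\eta$. Surviving bipartite pairs are then simultaneously $\varepsilon$-regular (as $\varepsilon'\le\varepsilon$) and of density strictly greater than $\eta$, so the partition $V_1\cup\cdots\cup V_{\ell'}$ is an $(\varepsilon,\eta)$-regular partition of $G'$. For any $x\in V(G')$, say $x\in V_i$, the degree loss splits as follows: at most $|W_0'|\le\varepsilon n/4$ edges to $W_0'$; at most $|V_i|-1\le n/m_0'\le\varepsilon n/8$ edges within $V_i$; at most $\sqrt{\varepsilon'}\,\ell\cdot m=\sqrt{\varepsilon'}\,n\le\varepsilon n/8$ edges in non-regular pairs at $V_i$; and at most $\eta n$ edges in low-density pairs. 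Summing these four contributions yields a total loss of at most $(\eta+\varepsilon)n$, as required.

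The one delicate point is the per-vertex control on edges lost in non-regular pairs: the classical regularity lemma only bounds the \emph{total} number of non-regular pairs, not the number of non-regular partners of each individual cluster, so without quarantining the few clusters that participate in too many non-regular pairs the per-vertex degree condition could fail badly for vertices inside such clusters. Everything else is routine bookkeeping of parameters, and the quantities $N_0,M_0$ for the degree form are inherited directly from the corresponding bounds $N_0',M_0'$ produced by the classical regularity lemma on inputs $\varepsilon'$ and $m_0'$.
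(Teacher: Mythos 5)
Your reduction to the classical regularity lemma is the standard route, and the handling of irregular pairs (quarantine the ``bad'' clusters that participate in too many irregular pairs) is correct and is exactly the right move there. However, the proof has a genuine gap at the low-density pairs: the claim that each surviving vertex loses ``at most $\eta n$ edges in low-density pairs'' is false in general. Density $d(V_i,V_j)\le\eta$ controls only the \emph{average} degree of vertices of $V_i$ into $V_j$; a single vertex can have far more. Concretely, one can have a pair $(V_i,V_j)$ that is $\varepsilon'$-regular with density $\eta/2$ in which a distinguished vertex $x\in V_i$ has degree $m=|V_j|$, provided $m>1/(\varepsilon')^2$ so that one exceptional vertex does not spoil regularity. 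Arranging this simultaneously for every $j$ produces a vertex $x$ whose degree in $G$ is close to $n$ and whose degree in $G'$, after you delete all sparse edges, is $0$ — a loss of roughly $n$, not $\eta n$. So the per-vertex degree condition of the lemma fails for such an $x$, and your argument as written never removes it.

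The missing step is a vertex-level atypicality argument for sparse pairs, parallel to the cluster-level one you ran for irregular pairs. By $\varepsilon'$-regularity, for each sparse pair $(V_i,V_j)$ at most $\varepsilon' m$ vertices of $V_i$ have degree greater than $(\eta+\varepsilon')m$ into $V_j$; summing over $j$ and applying Markov, at most $\sqrt{\varepsilon'}m$ vertices of each good $V_i$ are atypical for more than $\sqrt{\varepsilon'}\ell$ sparse partners, and these must be moved into the exceptional set as well. To keep the clusters equal-sized you remove exactly $\lceil\sqrt{\varepsilon'}m\rceil$ vertices from each cluster (padding with arbitrary vertices where needed); by Fact~\ref{fact:1}\eqref{fact:1,2} the surviving pairs remain $O(\varepsilon')$-regular with almost the same density. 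After this trimming, a surviving vertex $v\in V_i$ is atypical for at most $\sqrt{\varepsilon'}\ell$ sparse pairs (contributing $\le\sqrt{\varepsilon'}n$) and typical for the rest (contributing $\le(\eta+\varepsilon')n$). With these two extra terms, and with the slight inflation of $W_0'$ by $\sqrt{\varepsilon'}n$, the per-vertex loss still sums to at most $(\eta+\varepsilon)n$ after recalibrating $\varepsilon'$ downward (say $\varepsilon':=(\varepsilon/16)^2$). A second, minor issue: with your choice $m_0'=\max\{m_0,\lceil 8/\varepsilon\rceil\}$, removing up to $\sqrt{\varepsilon'}\ell$ bad clusters can drop $\ell'$ below $m_0$; take $m_0'\ge\lceil m_0/(1-\sqrt{\varepsilon'})\rceil$ to be safe.
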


The {\em $(\varepsilon,\eta)$-reduced graph} $R$, with respect to the $(\varepsilon,\eta)$-regular partition given by Lemma~\ref{reg:deg}, is the graph with vertex set $\{V_i:i\in[\ell]\}$, called \textit{clusters}, in which $V_iV_j$ is an edge if and only if $d(V_i,V_j)>\eta$. We will sometimes refer to the $(\varepsilon,\eta)$-reduced graph $R$ without explicitly referring to the associated $(\varepsilon,\eta)$-regular partition of the graph.

 It turns out that $R$ inherits many  properties of $G'$, such as the edge density or the minimum degree (scaled to the order of $R$). Indeed, it is easy to calculate the following.
\begin{fact}\label{fact:2}Let $0<2\varepsilon\le\eta\le \tfrac{\alpha}{2}$. If $G$ is a $n$-vertex graph with $\delta(G)\ge \alpha n$, and $R$ is an $(\varepsilon,\eta)$-reduced graph of $G$, then $\delta(R)\ge (\alpha-2\eta)|R|$. 
	\end{fact}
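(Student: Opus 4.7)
The plan is to double-count edges from a fixed cluster, using the minimum-degree guarantee of~$G'$ on one side and the density control of non-edges of~$R$ on the other. Concretely, I would fix an arbitrary cluster $V_i$ of the $(\varepsilon,\eta)$-regular partition, write $m:=|V_i|$ and $\ell:=|R|$ (so that $m\ell\le n$), and let $d:=\deg_R(V_i)$. The goal reduces to proving $d\ge(\alpha-2\eta)\ell$.

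For the lower bound on $e_{G'}(V_i,V(G')\setminus V_i)$, I would invoke Lemma~\ref{reg:deg}: every $v\in V(G')$ satisfies $\deg_{G'}(v)\ge\deg_G(v)-(\eta+\varepsilon)n\ge(\alpha-\eta-\varepsilon)n$. Since $V_i$ is independent, summing over $V_i$ gives
\[
\sum_{v\in V_i}\deg_{G'}(v)\ \ge\ m(\alpha-\eta-\varepsilon)n.
\]
For the upper bound, I would use the key feature of the degree form: each pair $(V_i,V_j)$ with $j\ne i$ has either density exceeding $\eta$, in which case $V_iV_j\in E(R)$ and the pair contributes at most $m^2$ edges, or density exactly $0$, contributing none. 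Hence
\[
\sum_{v\in V_i}\deg_{G'}(v)\ \le\ d\cdot m^2.
\]

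Combining the two estimates yields $d\ge(\alpha-\eta-\varepsilon)\,n/m\ge(\alpha-\eta-\varepsilon)\ell$, where the last inequality uses $m\ell\le n$. Finally, the hypothesis $2\varepsilon\le\eta$ gives $\varepsilon\le\eta$, so $\alpha-\eta-\varepsilon\ge\alpha-2\eta$, and we conclude $\deg_R(V_i)\ge(\alpha-2\eta)\ell$, as required. Since $V_i$ was arbitrary, the same bound holds for every cluster.

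There is no real obstacle here: the argument is a one-line degree-count per cluster. The only subtle point worth being explicit about is that in the degree form of the regularity lemma the non-edges of $R$ correspond to pairs of density exactly zero (not merely low density), so that the crude upper bound $dm^2$ on edges leaving $V_i$ in $G'$ is justified without a separate $\eta m^2$ correction term for each non-neighbour cluster. Once this is noted, the inequalities line up and the factor $2\eta$ in the statement arises precisely from the slack $\eta+\varepsilon\le 2\eta$.
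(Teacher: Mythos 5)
Your proof is correct and is precisely the ``easy calculation'' the paper alludes to without spelling out: double-count $e_{G'}(V_i,V(G')\setminus V_i)$ from below via the degree-form lower bound $\deg_{G'}(v)\ge\deg_G(v)-(\eta+\varepsilon)n$ and from above via the fact that pairs outside $E(R)$ have density exactly zero, then use $m\ell\le n$ and $\varepsilon\le\eta/2$. Nothing is missing; the observation you flag about density-$0$ (not merely low-density) non-edges of $R$ is exactly the right subtlety to make the upper bound $d\,m^2$ clean.
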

	
\smallskip

We close this subsection with a lemma that illustrates why regularity is useful for embedding trees. It states that a tree will always fit into a regular pair, if the  tree is small enough.
	
\begin{lemma}\label{lem:T1}
Let $0<\beta\le\varepsilon\le \tfrac{1}{25}$. Let $(A,B)$ be a $(\varepsilon,5\sqrt{\varepsilon})$-regular pair with $|A|=|B|=m$, and let $X\subseteq A, Y\subseteq B, Z\subseteq A\cup B$ be such that $\min\{|X\setminus Z|, |Y\setminus Z|\}>\sqrt \varepsilon m$. \\
Then any tree $T$ on at most $\beta m$ vertices can be embedded into $(X\cup Y)\setminus Z$.
Moreover, for each $v\in V(T)$ there are at least $2\varepsilon m$ vertices from $(X\cup Y)\setminus Z$ that can be chosen as the image of $v$.
\end{lemma}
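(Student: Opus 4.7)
My plan is a greedy breadth-first embedding that exploits Fact~\ref{fact:1}. First I would root $T$ at an arbitrary vertex~$r$, list the vertices of $T$ in BFS order as $v_1=r,v_2,\dots,v_n$ with $n\le\beta m$, and fix the bipartition of $T$ so that one colour class will be embedded into $X\setminus Z$ and the other into $Y\setminus Z$. For each step $t$ write $X_t\subseteq X\setminus Z$ and $Y_t\subseteq Y\setminus Z$ for the subsets not yet used as images. Since at most $n\le\beta m\le\varepsilon m$ vertices are ever used, and since $\varepsilon\le 1/25$ forces $\sqrt\varepsilon\ge 5\varepsilon$, both $X_t$ and $Y_t$ retain size at least $(\sqrt\varepsilon-\varepsilon)m>\varepsilon m$ throughout the process, so they stay $\varepsilon$-significant at every step.

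The engine of the argument is the invariant that whenever I embed $v_t$, the image $\phi(v_t)$ is $\varepsilon$-typical to the then-available set on the opposite side of the pair. For $v_1$ this is routine by Fact~\ref{fact:1}(1): at most $\varepsilon m$ vertices of $X\setminus Z$ are non-typical to $Y\setminus Z$, so there are at least $|X\setminus Z|-\varepsilon m\ge 2\varepsilon m$ valid starting points, and any of them can serve as image of~$r$. For $v_t$ with $t\ge 2$, say its parent was embedded to $P=\phi(v_{p(t)})\in A$, so that $v_t$ must be placed in $B$. By the invariant applied at time $p(t)$, one has $\deg(P,Y_{p(t)-1})>(5\sqrt\varepsilon-\varepsilon)|Y_{p(t)-1}|$, and $Y_{p(t)-1}\setminus Y_{t-1}$ has size at most $\beta m\le\varepsilon m$. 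A short calculation using $|Y_{p(t)-1}|\ge(\sqrt\varepsilon-\varepsilon)m$ then gives $|N(P)\cap Y_{t-1}|\ge 2\varepsilon m$, which is exactly the number of candidates for $\phi(v_t)$ claimed in the ``moreover'' part. To continue the process I would pick $\phi(v_t)$ from among these candidates so that it is also $\varepsilon$-typical to $X_{t-1}$; Fact~\ref{fact:1}(1) forbids this for at most a further $\varepsilon m$ vertices, leaving a legal choice that keeps the invariant alive.

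The main obstacle is precisely this bookkeeping: one has to verify that, even though the available opposite side keeps shrinking as the embedding proceeds, the degree of $P$ into it stays above $2\varepsilon m$. Everything boils down to checking the inequality
\[
(5\sqrt\varepsilon-\varepsilon)(\sqrt\varepsilon-\varepsilon)m-\beta m\ \ge\ 2\varepsilon m
\]
under the hypotheses $\beta\le\varepsilon\le 1/25$ (so that $\sqrt\varepsilon\ge 5\varepsilon$ and the cross terms are negligible), and to noting that, after discarding the at most $\varepsilon m$ candidates that fail to be typical to $X_{t-1}$, there is still at least one vertex left to maintain the invariant for the next iteration. Once this is in place the procedure runs to termination, producing the desired embedding of $T$ into $(X\cup Y)\setminus Z$.
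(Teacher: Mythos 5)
Your proof takes essentially the same route as the paper's: a levelwise (BFS) greedy embedding using the regular-pair bipartition, maintaining the invariant that every embedded vertex is typical towards the currently unoccupied part of the opposite side, and closing with a degree count. One small point worth flagging: the ``moreover'' clause of the lemma is meant to produce $2\varepsilon m$ candidates \emph{each of which} allows the embedding to continue, so for a non-leaf $v_t$ the relevant set is not $N(P)\cap Y_{t-1}$ itself but its intersection with the vertices typical to $X_{t-1}$; after trimming the at most $\varepsilon m$ non-typical ones, your arithmetic yields at least $\varepsilon m$ valid choices rather than $2\varepsilon m$. This is purely a matter of constants (the paper's own terse count is similarly tight) and does not affect the existence of the embedding; if you want the full $2\varepsilon m$ you should start from the slightly stronger bound $\deg(P,Y_{p(t)})> (5\sqrt\varepsilon-\varepsilon)\,|Y_{p(t)}|$ with $|Y_{p(t)}|>\sqrt\varepsilon m-\beta m$ and then subtract both the $\le\beta m$ freshly occupied vertices and the $\le\varepsilon m$ non-typical ones in a single step, keeping track of the fact that $\sqrt\varepsilon\ge 5\varepsilon$.
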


\begin{proof}
We construct the embedding $\phi:V(T)\to X\cup Y$ levelwise, starting with the root, which is embedded into a typical vertex of $(X\cup Y)\setminus Z$. At each step~$i$ we ensure that all vertices of level $i$ are embedded into vertices of $X\setminus Z$ (or~$Y\setminus Z$) that are typical with respect to the unoccupied vertices of $Y\setminus Z$ (or $X\setminus Z$). This is possible, because at each step $i$, and for each vertex $v$ of level $i$, the degree of a typical vertex into the unoccupied vertices on the other side is at least $4\varepsilon m$, 
and there are at most $\varepsilon m$ non typical vertices and at most $|T|\leq \beta m$ already occupied vertices.
\end{proof}

%
\subsection{A matching lemma}
We finish the preliminaries  with a lemma that will allow us to find a large connected matching in the reduced graph. 
\begin{lemma}\label{lem:matching}
Let $H$ be any graph. Then there exists an independent set~$I$, a matching $M$, and a set of vertex disjoint triangles $\Gamma$ so that $V(H)=I\cup V(M)\cup V(\Gamma)$. Moreover,  there is a partition $V(M)=V_1\cup V_2$ of $V(M)$ such that every edge of $M$ has one vertex in $V_1$ and one vertex in $V_2$, and $N(x)\subseteq M_1$ for all $x\in I$.
\end{lemma}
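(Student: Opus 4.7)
My plan is to build the decomposition from a maximum matching. Take a maximum matching $M_0$ in $H$ and set $I_0 := V(H)\setminus V(M_0)$; since $M_0$ is maximal, $I_0$ is independent in $H$. The basic fact I will lean on throughout is the augmenting-path principle: if $uv\in M_0$ and $u,v$ have distinct neighbours $a,b\in I_0$, then $a$-$u$-$v$-$b$ augments $M_0$, contradicting maximality. Hence any matching edge both of whose endpoints send edges into $I_0$ must have its endpoints share a common neighbour there, producing a triangle together with that common neighbour.

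Next I would let $\Gamma$ be a \emph{maximal} collection of vertex-disjoint ``good'' triangles of the form $\{a,u,v\}$ with $a\in I_0$, $uv\in M_0$, and $a$ adjacent to both $u$ and $v$. Set $M := M_0\setminus\{uv:\{a,u,v\}\in\Gamma\}$ and $I := I_0\setminus V(\Gamma)$. Then $V(H)=I\cup V(M)\cup V(\Gamma)$ is a partition with $I$ independent, $M$ a matching, and $\Gamma$ a family of vertex-disjoint triangles.

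For the partition $V(M) = V_1\cup V_2$, I claim that for each $uv\in M$ at most one of $u,v$ has a neighbour in $I$. Indeed, if both did, a common neighbour $a\in I$ would produce a good triangle disjoint from $\Gamma$ (contradicting maximality of $\Gamma$), while distinct neighbours in $I\subseteq I_0$ would produce an augmenting path in $M_0$ (contradicting maximality of $M_0$). So I place the endpoint with a neighbour in $I$ (if any) into $V_1$ and its matching partner into $V_2$, choosing arbitrarily if neither endpoint sees $I$. By construction, every neighbour of $I$ lying in $V(M)$ lies in $V_1$.

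The last point to verify is that $x\in I$ has no neighbour in $V(\Gamma)$, which together with the independence of $I$ and the construction above yields $N(x)\subseteq V_1$. If $x\in I$ were adjacent to the apex $a\in I_0$ of some triangle in $\Gamma$, that would contradict independence of $I_0$; and if $x$ were adjacent to a triangle's matched vertex $u$, whose partner $v$ is joined to the apex $a$, then $x$-$u$-$v$-$a$ would be an augmenting path in $M_0$, again impossible. The whole argument is thus driven by two maximality choices ($M_0$ maximum and $\Gamma$ maximal) together with the augmenting-path trick, so there is no real obstacle; the only thing that needs care is checking that after extracting the triangles the remaining matching edges and leftover independent set still interact correctly, which the maximality of $\Gamma$ guarantees.
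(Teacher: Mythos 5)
Your proof is correct. Where the paper takes a single maximisation — choosing a matching $M$ and a family $\Gamma$ of vertex-disjoint triangles, disjoint from $M$, so as to maximise $|V(M)|+|V(\Gamma)|$, and then derives everything from exchange moves against that one quantity — you instead do it in two greedy stages: first fix a \emph{maximum} matching $M_0$ (and hence, via Berge, obtain the absence of augmenting paths, not just inclusion-maximality, which you genuinely need), and then extract a maximal collection of triangles of the special form ``matching edge plus a common neighbour outside the matching.'' The augmenting-path trick and the maximality of $\Gamma$ then yield exactly the same neighbourhood constraints on $I$ that the paper derives by swapping a triangle for two matching edges or a matching edge for a triangle. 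The paper's one-shot optimisation is a little slicker and permits arbitrary triangles in $\Gamma$, whereas your version is more explicit about the mechanism (augmenting paths) and restricts $\Gamma$ to triangles with one apex outside $M_0$; both satisfy the lemma's (rather weak) requirements on $\Gamma$. One cosmetic slip: you write ``since $M_0$ is maximal, $I_0$ is independent'' after having chosen $M_0$ maximum — for independence maximality suffices, but the augmenting-path step really does need maximum, so state it consistently.
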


\begin{proof}
Let us choose  a  matching $M$ and a family $\Gamma$ of disjoint triangles, that are disjoint from $M$, maximising $|V(M)|+|V(\Gamma)|$. Then the set~$I$ consisting of all vertices not covered by $M\cup\Gamma$ is independent. 

Consider a vertex $x\in I$. Note that because of our choice of $M$ and $\Gamma$, we know that $x$ is not adjacent to any vertex from any  triangle from $\Gamma$. Also, note that for any edge $uv$ in $M$, vertex $x$ sees at most one of $u$, $v$. Finally, if $x$ sees $u$, then no other vertex from $I$ can see $v$. This proves the statement.
	\end{proof}

\section{Cutting trees}\label{sec:trees}

This section contains some preliminary results on cutting trees.  In the first subsection we show how to find a constant number of vertices, such that after taking these out, the remaining components are tiny. In fact, they will be so small that we can use Lemma~\ref{lem:T1} in order to embed them into a regular pair.

  In the second subsection, we find a cutvertex $z$ and a colouring of the remaining components, so that the colour classes of each component are rather balanced (we are able to guarantee a ratio of roughly $\frac 13$--$\frac 23$ at worst). This is very useful when we are in the situation where the reduced graph has no big component. Then, we can embed the cutvertex $z$ into a maximum degree vertex of $G$, and embed the components of $T-z$ into the components of the reduced graph. Since we balanced them, and because of the minimum degree of $G$, they will fit and the embedding can be completed.


\smallskip

Let us go through some notation for trees. Given any rooted tree $T$, we define a partial order $\preceq$ on $V(T)$ by saying that $x\preceq y$ ($x$ is {\it below} $y$) if and only if $x$ lies on the unique path from $y$ to $r(T)$, where $r(T)$ denotes the root of $T$. If in addition, $xy\in E(T)$,  we say $y$ is a {\it child} of $x$, and $x$ is the {\it parent} of $y$. The {\it tree $T(x)$ induced by $x$} is the subtree of $T$ induced by the set $V(T(x))=\{v:v\preceq x\}$. For $i\ge 0$, the $i^{\text{th}}$ level of $T$, denoted by $L_i$, consists of all vertices at distance $i$ from $r(T)$.

\subsection{Cutting $T$ into small trees}\label{tree-cutting}

As we showed  in Section~\ref{sec:regu}, it is simple to embed sufficiently small trees into regular pairs, and furthermore, one may continue embedding small trees until the pair become almost full. For this reason, it is useful to cut down the tree $T$ one would like to embed into a regularised host graph $G$ into very small subtrees, connected by few vertices.

The next proposition shows that one can decompose any tree into a family of small subtrees. Versions of this proposition have already appeared in earlier literature on tree embeddings, for instance in~\cite{AKS}. 

\begin{proposition}\label{prop:cut3}
	Let $\beta\in(0,1)$, and let $T$ be a rooted tree on $t+1$ vertices. Then there exists a set  $S\subset V(T)$ and a family~$\mathcal{P}$ of disjoint rooted trees such that 
	\begin{enumerate}[(i)]
	\item $r(T)\in S$;
		\item $\mathcal{P}$ consists of the components of $T-S$, and each $P\in\mathcal P$ is rooted at the vertex closest to the root of $T$;
		\item $|P|\le\beta t$ for each  $P\in\mathcal{P}$; and
		\item $|S| < \frac{1}{\beta}+2$.
	\end{enumerate}
	The vertices from $S$ will be called {\em seeds}, and the components from $\mathcal P$ will be called the {\em pieces} of the decomposition.
\end{proposition}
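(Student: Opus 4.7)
The plan is a clean induction on $|T|$, for which I will prove the reformulated statement: for every rooted tree $T$ on $n\ge 1$ vertices and every real $b\ge 1$, there exists $S\subseteq V(T)$ with $r(T)\in S$, such that every component of $T-S$ has at most $b$ vertices and $|S|\le\frac{n-1}{b}+1$. Applying this with $n=t+1$ and $b=\beta t$ immediately yields $|S|\le\frac{1}{\beta}+1<\frac{1}{\beta}+2$, and we obtain $\mathcal P$ by declaring, as required in (ii), that each component of $T-S$ is rooted at its vertex closest to $r(T)$.

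For the base case, if no vertex $v$ of $T$ satisfies $|T(v)|>b$ (which happens in particular when $n\le b+1$), then taking $S=\{r(T)\}$ already leaves components of size at most $b$, so we are done. For the inductive step, pick a vertex $v$ of $T$ that is deepest among all vertices with $|T(v)|>b$; by the deepest choice, every child $w$ of $v$ in $T$ satisfies $|T(w)|\le b$. If $v=r(T)$, then $S=\{r(T)\}$ works directly. Otherwise, define $T':=T-V(T(v))$, a rooted tree with root $r(T)$ and $|T'|=n-|T(v)|<n-b$. Applying the induction hypothesis to $T'$ (with the same $b$) yields a seed set $S'\ni r(T)$, and I set $S:=S'\cup\{v\}$. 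The components of $T-S$ are precisely those of $T'-S'$ together with the subtrees $T(w)$ over all children $w$ of $v$ in $T$; each of these has size at most $b$ by construction.

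The only numerical step is the count $|S|=|S'|+1\le\frac{|T'|-1}{b}+2<\frac{n-b-1}{b}+2=\frac{n-1}{b}+1$, where the strict inequality uses $|T(v)|>b$. There is no serious obstacle to the argument; conceptually, the strategy simply charges each seed $v\neq r(T)$ against the subtree $T(v)$, which has more than $b$ vertices and is completely excised before recursing, so these subtrees are pairwise disjoint within $V(T)\setminus\{r(T)\}$, yielding the $1/\beta$-type bound on the total number of seeds.
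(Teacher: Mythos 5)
Your proof is correct and follows essentially the same strategy as the paper: repeatedly excise the subtree rooted at a $\preceq$-maximal (deepest) vertex whose induced subtree exceeds the threshold, charging each seed other than $r(T)$ to its excised subtree of more than $\beta t$ vertices. The paper casts this as an explicit iteration while you package it as induction, but the decomposition and the counting are the same; your bound $|S|\le\frac{1}{\beta}+1$ is in fact slightly sharper than the stated $|S|<\frac{1}{\beta}+2$.
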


\begin{proof}
 We iteratively construct the set $S$, starting with $T^0:=T$ and $S^0:= \emptyset$. In step $i+1$, let $s_{i+1}$ be the maximal vertex of $T^i$ such that
	\begin{equation*}\label{eq:iter}
		|T^{i}(s_{i+1})| > \beta t.
	\end{equation*}
Note that by the maximality of $s_{i+1}$ the trees in $T^{i}(s_{i+1})-s_{i+1}$ each cover at most $\beta t$ vertices. Obtain $S_{i+1}$ by adding $s_{i+1}$ to $S^{i}$ and set $T^{i+1} = T^i-T^i(s_{i+1})$. 
If at some step $j$ there is no vertex $s_{j+1}$ with $|T^{j}(s_{j+1})| > \beta t$, then $|T^j|\leq \beta t$, and we  end the process. We set $S:=S^{j}\cup\{r(T)\}$ and let $\mathcal{P}$ be the set of connected components of $T- S$.
	
	Properties (i)--(iii) clearly hold. For (iv) observe that $|T^{i+1}|< |T^i|-\beta t$. Hence,  $$0 \leq |T^m| < |T^0| - j\cdot\beta t ,$$ which in turn implies that $|S| = j+1 \leq \frac{|T|}{\beta t}+1< \frac{1}{\beta}+2$.   
\end{proof}

\subsection{Finding a good cutvertex}\label{sec:cutT}

The objective of this subsection is to show several useful lemmas on cutting trees at one vertex. The first and easiest two lemmas allow us to find a cutvertex $z$ in a given tree $T$ in a way that we have some control on the size of the components of $T-z$ (Lemma~\ref{lem:cut1}), and moreover, allow us to group these components having some control over the total sizes of the groups (Lemma~\ref{lem:num}). The most laborious result of this section,  Proposition~\ref{prop:cut5}, states that in any tree $T$, we can find a cutvertex and properly $2$-colour the remaining components so that in total the colour classes remain somewhat balanced.

Our first lemma is folklore, but for completeness, we include its proof.

\begin{lemma}\label{lem:cut1}
	Let $T$  be a tree on $t+1$ vertices, and let $x$ be a leaf of $T$. Then~$T$ has a vertex $z$ such that every component of $T-z$ has at most $\lfloor\frac t2\rfloor$ vertices, except the one containing $x$, which has at most $\lceil\frac t2\rceil$ vertices.
\end{lemma}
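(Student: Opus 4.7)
The plan is to use a standard walk-to-the-centroid argument, rooted at the leaf $x$. First I would root $T$ at $x$ and, for each vertex $v$, let $T(v)$ denote the subtree induced by $v$ and its descendants (so in particular $|T(x)|=t+1$). The natural candidate for the cutvertex is
\[
z := \text{a vertex of maximum distance from } x \text{ with } |T(z)| > \lfloor t/2\rfloor.
\]
Such a $z$ exists since $|T(x)|=t+1>\lfloor t/2\rfloor$, and by construction $z\ne x$ provided $t\ge 1$ (the case $t=0$ is trivial).

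Next I would verify the two size bounds. The components of $T-z$ are exactly the subtrees $T(w)$ for each child $w$ of $z$, together with the component containing $x$, which equals $T-T(z)$ and has size $(t+1)-|T(z)|$. By the maximality of the distance of $z$ from $x$, every child $w$ of $z$ satisfies $|T(w)|\le\lfloor t/2\rfloor$, which takes care of all components not containing $x$. For the component containing $x$, the defining inequality $|T(z)|>\lfloor t/2\rfloor$ yields $|T(z)|\ge \lfloor t/2\rfloor+1$, hence
\[
(t+1)-|T(z)|\le t-\lfloor t/2\rfloor=\lceil t/2\rceil,
\]
as required.

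There is no real obstacle here; the argument is essentially the classical centroid walk. The only thing to watch is the floor/ceiling bookkeeping (which behaves slightly differently according to the parity of $t$, but the inequality $|T(z)|\ge\lfloor t/2\rfloor+1$ handles both cases uniformly via the identity $t-\lfloor t/2\rfloor=\lceil t/2\rceil$), and the sanity check that $z\ne x$ so that ``the component containing $x$'' is well-defined, which follows because the unique child $y$ of the leaf $x$ has $|T(y)|=t>\lfloor t/2\rfloor$, forcing the chosen $z$ to lie strictly below $x$.
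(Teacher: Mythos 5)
Your proof is correct and is essentially the paper's own argument: root $T$ at $x$, choose $z$ as the deepest vertex with $|T(z)|>\lfloor t/2\rfloor$ (the paper phrases this as a $\preceq$-maximal such vertex, which is functionally equivalent), observe that maximality forces every child subtree to have at most $\lfloor t/2\rfloor$ vertices, and that the $x$-side component has $t+1-|T(z)|\le \lceil t/2\rceil$ vertices. The extra sanity check that $z\neq x$ is a nice touch, though the paper leaves it implicit.
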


\begin{proof}
Let $z$ be a maximal vertex (with respect to the order induced by $x$) such that $|T(z)| > \lfloor\frac t2\rfloor$. Then every component of $T-z$ has at most $\lfloor\frac t2\rfloor$ vertices: This is obvious (from the definition of~$z$) for the components not containing $x$, while the component that contains~$x$ only has $|T|-|T(z)|\leq t+1-(\lfloor\frac t2\rfloor +1)=\lceil\frac t2\rceil$ vertices. 
%
\end{proof}

\begin{definition}
Call a vertex $z$ as in  Lemma~\ref{lem:cut1} a {$\frac t2$-separator.}
\end{definition}

We now prove an auxiliary lemma on partitioning sequences of integers. This lemma will be used in the proofs of both Lemma~\ref{lem:cut4} and Proposition~\ref{prop:cut5}, and also in the proofs of Theorems~\ref{main:1} and~\ref{main:1-2}.

\begin{lemma}\label{lem:num}
Let $m,t \in \NN_+$ and let $(a_i)_{i=1}^m$ be a sequence of positive integers  such that $0 < a_i \leq \lceil\frac t2\rceil$, for each $i\in [m]$, and $\sum_{i=1}^m a_i \leq t$. Then 
\begin{enumerate}[(i)]
	\item there is a partition $\{I_1,I_2,I_3\}$ of $[m]$ such that $\sum_{i\in I_3} a_i \leq \sum_{i\in I_2} a_i \leq \sum_{i\in I_1} a_i\leq \lceil\frac t2\rceil$ 
	; and
	\item there is a partition $\{J_1,J_2\}$ of $[m]$ such that  $\sum_{i\in J_2} a_i \leq \sum_{i\in J_1} a_i \leq \frac{2}{3}t$.
\end{enumerate}
\end{lemma}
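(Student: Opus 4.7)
The plan is to prove both (i) and (ii) by the same extremal device: pick a partition that minimizes the sum of squares of its part-sums, and show that if the desired bound were violated, an appropriate one-element swap would strictly decrease this potential, contradicting minimality.

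For (i), I would start with a partition $\{I_1,I_2,I_3\}$ of $[m]$ minimizing $S_1^2+S_2^2+S_3^2$, where $S_j:=\sum_{i\in I_j}a_i$, and relabel so that $S_1\geq S_2\geq S_3$. It then suffices to show $S_1\leq \lceil t/2\rceil$. Suppose for contradiction that $S_1>\lceil t/2\rceil\geq t/2$. Since each $a_i\leq \lceil t/2\rceil<S_1$, we must have $|I_1|\geq 2$, so the smallest element $a_k\in I_1$ is at most the average $S_1/|I_1|\leq S_1/2$. From $S_1+S_2+S_3\leq t$ and $S_3\leq S_2$ we get $S_3\leq (t-S_1)/2<t/4$, whence $S_1-S_3>S_1/2\geq a_k$. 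Moving $a_k$ from $I_1$ to $I_3$ changes the potential by $2a_k\bigl(a_k-(S_1-S_3)\bigr)<0$, contradicting the choice of partition.

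For (ii), I would repeat the argument on a partition $\{J_1,J_2\}$ minimizing $S_1^2+S_2^2$, labeled so that $S_1\geq S_2$. Assuming $S_1>\tfrac{2}{3}t$ gives $S_2\leq t-S_1<\tfrac{t}{3}$. Since $\lceil t/2\rceil\leq \tfrac{2}{3}t$ whenever $t\geq 3$ (the cases $t\leq 2$ are immediate), no single $a_i$ exceeds $\tfrac{2}{3}t<S_1$, so $|J_1|\geq 2$ and the smallest element $a_k\in J_1$ satisfies $a_k\leq S_1/2$. The crucial inequality is then $a_k\leq S_1/2<S_1-S_2$, which holds because $S_2<\tfrac{t}{3}<S_1/2$. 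Hence the swap of $a_k$ from $J_1$ into $J_2$ strictly decreases the potential, contradicting minimality.

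I do not foresee any significant obstacle; the argument is entirely elementary once one fixes on the sum-of-squares potential. The only mildly delicate point is the chain $a_k\leq S_1/2<S_1-S_3$ (respectively $S_1-S_2$), which is exactly where the two hypotheses $a_i\leq \lceil t/2\rceil$ and $\sum a_i\leq t$ both get used. Alternatively, the whole proof can be rephrased as a greedy ``assign the next $a_i$ to the currently smallest part'' algorithm with essentially the same case analysis, but the extremal formulation seems cleaner to write down.
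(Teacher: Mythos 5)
Your proof is correct and takes a genuinely different route from the paper's. The paper builds $I_1$ and $I_2$ greedily, each time choosing a subset of the remaining indices with sum at most $\lceil t/2\rceil$ and maximizing that sum, and then takes $I_3$ as the leftover; part (ii) is then \emph{deduced} from (i) by merging $I_2\cup I_3$ into one block and observing that the ordering $\sum_{I_3}a_i\le\sum_{I_2}a_i\le\sum_{I_1}a_i$ already forces the merged sum to be at most $\tfrac{2}{3}t$. You instead minimize the sum of squared part-sums and rule out a violation of the cap by a single-element swap, applying the same template independently to the two parts; this is a clean, uniform balancing argument that is, if anything, easier to write out carefully. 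Two remarks. First, the paper's greedy construction yields the extra property $|I_3|\le1$, recorded as Remark~\ref{rem:F_3} and used later in the proof of Lemma~\ref{lem:superlemma} (see~\eqref{oneinF3}); your minimum-variance partition does not guarantee this (for $a=(1,1,1,1,1,1)$ and $t=6$ the optimum is the balanced split $(2,2,2)$, so $|I_3|=2$), so whoever swaps your proof in would have to re-derive that remark separately. Second, your aside that ``the cases $t\le2$ are immediate'' is not literally true for $t=1$: there $m=1$, $a_1=1>\tfrac{2}{3}t$, and (ii) actually fails. This is a defect of the lemma as stated rather than of your argument in particular --- the paper's own derivation of (ii) is equally broken at $t=1$ since $\lceil t/2\rceil>\tfrac{2}{3}t$ there --- and it is harmless in the paper because the lemma is only ever invoked with $t$ large, but it is worth being honest that $t\le2$ is not simply ``immediate.''
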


\begin{proof}
We first pick a set $I_1\subset[m]$ with $\sum_{i\in I_1} a_i \leq \lceil\frac t2\rceil$ that maximizes the sum. From $[m] \setminus{I_1}$ we extract a second set $I_2$ with $\sum_{i\in I_2} a_i \leq \lceil\frac t2\rceil$ that maximizes the sum. The choice of $I_1$ and $I_2$ ensures that for $I_3:=[m] \setminus{(I_1\cup I_2)}$ it also holds that $\sum_{i\in I_3} a_i \leq \lceil\frac t2\rceil$, and that $\sum_{i\in I_3} a_i \leq \sum_{i\in I_2} a_i$.
Therefore, the sets $I_1, I_2, I_3$ fulfill the conditions in~(i). (Notice that $I_3$, and possibly also $I_2$, may be empty.)  


For~(ii) we proceed as follows. If $I_3=\emptyset$ we just set $J_1 := I_1$ and $J_2:= I_2$, which clearly satisfies~(ii). If $I_3\neq \emptyset$ we define $J_1$ as one of the sets $I_2\cup I_3$ and $I_1$, and $J_2$ as the other set, in a way that $\sum_{i\in J_2} a_i \leq \sum_{i\in J_1} a_i$. Observe that the second part of~(i) implies that  $\sum_{i\in I_2\cup I_3} a_i \leq  \frac{2}{3}t$.
 So again,~(ii) is satisfied.
\end{proof}

\begin{remark}\label{rem:F_3}
Observe that the set $I_3$ from Lemma \ref{lem:num}~(i) has at most one element, because otherwise, due to the maximality of $I_1$ and $I_2$, there would exist $j,k\in I_3$ such that $a_j+\sum_{i\in I_1} a_i>\lceil\frac t2\rceil$ and $a_k+\sum_{i\in I_2} a_i>\lceil\frac t2\rceil$, a contradiction to the fact that $\sum_{i=1}^m a_i \leq t$.
\end{remark}

Lemma \ref{lem:num} tells us that after using Lemma~\ref{lem:cut1} to cut a tree $T$ at a vertex~$z$, we can group the components of $T-z$ in a way that the total size of each group is conveniently bounded. We would now like to say something about the balancedness of the resulting forest, and for this we resort to the concept of vertex colouring.

For a proper $2$-colouring $c:V(G)\rightarrow\{0,1\}$ of a graph $G$ with colours $0$ and $1$,  we define $c_0:=\{v\in V(G)\,:\,c(v)=0\}$ and $c_1:=\{v\in V(G)\,:\,c(v)=1\}$. For better readability, throughout all proofs, we will stick to the convention that $|c_0|\geq |c_1|$ (but this will be restated in each proof). 

\begin{lemma}\label{lem:cut4}
Every tree $T$ on $t+1$ vertices has a vertex $z$ such that $T-z$ admits a proper $2$-colouring $c:V(T-z)\rightarrow\{0,1\}$ with $|c_0|\leq \frac{3t-1}{4}$ and $|c_1|\leq \frac{t}{2}$.
\end{lemma}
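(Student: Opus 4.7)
The strategy is to apply Lemma~\ref{lem:cut1} to obtain a convenient cutvertex $z$ at which the components of $T-z$ are small, and then to independently choose the ``orientation'' of the unique proper $2$-colouring of each component so that a greedy balancing argument yields the stated bounds. Concretely, pick any leaf $x$ of $T$ and let $z$ be the resulting $\tfrac{t}{2}$-separator, so every component $P_i$ of $T-z$ satisfies $|P_i| \leq \lceil t/2 \rceil$. For each $i$, $P_i$ is a tree, hence admits a (unique, up to swapping labels) proper $2$-colouring; let $\alpha_i \geq \beta_i$ denote its two class sizes and set $\delta_i := \alpha_i - \beta_i \geq 0$.

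The key intermediate claim is that $\delta_i \leq \tfrac{t-1}{2}$ for every $i$ (at least for $t\geq 3$; the cases $t\leq 2$ are trivial). Indeed, if $|P_i| \geq 2$ then $P_i$ contains an edge, so both colour classes of $P_i$ are non-empty, giving $\beta_i \geq 1$ and hence $\delta_i \leq |P_i| - 2 \leq \lceil t/2 \rceil - 2$. If instead $|P_i|=1$, then $\delta_i = 1$. In both cases we stay at or below $(t-1)/2$.

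We now assemble the colouring $c$ of $T-z$ by choosing, for each component $P_i$ independently, which of its two classes receives colour $0$; encoding this choice by $\varepsilon_i \in \{+1,-1\}$, one has $|c_0|-|c_1| = \sum_i \varepsilon_i \delta_i$. We process the components in any order and select $\varepsilon_i$ so as to minimise the absolute value of the running partial sum; a routine induction shows that the final value $S_m$ satisfies $|S_m| \leq \max_i \delta_i \leq (t-1)/2$. Swapping the two colour labels globally if $S_m < 0$, we may assume $|c_0|-|c_1| = S_m \in [0, (t-1)/2]$, and combined with $|c_0|+|c_1|=t$ this gives exactly $|c_0| \leq (3t-1)/4$ and $|c_1| \leq t/2$. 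The main obstacle is that the bound $\delta_i \leq (t-1)/2$ has no slack: the greedy procedure transfers it to $|c_0|-|c_1|$ verbatim, so even one component with $\delta_i \geq t/2$ would break the argument. What rules this out is precisely the observation that $\beta_i \geq 1$ whenever $|P_i| \geq 2$, i.e.\ that each $P_i$ is a connected tree rather than an arbitrary vertex set---this trims $|P_i| - 1$ down to $|P_i| - 2$ and so brings $\max_i \delta_i$ just below the critical threshold $\lceil t/2 \rceil - 1$.
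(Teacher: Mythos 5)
Your proof is correct (for $t\ge 3$, which is all that matters; the lemma is vacuously problematic for $t\le 1$ in the paper as well), and it takes a genuinely different and more elementary route than the paper's. The paper first invokes Lemma~\ref{lem:num} to group the components of $T-z$ into three forests $F_1,F_2,F_3$ each of size at most $\lceil t/2\rceil$ with $|F_1|\ge t/3$, and then runs a two-case analysis keyed on whether $F_1$ has an especially lopsided $2$-colouring; the computations for $|c_0|$ and $|c_1|$ are done separately in each case and rely on the coarse size bounds for $F_1, F_2, F_3$. You instead bypass the $3$-partition entirely: with $z$ chosen as in Lemma~\ref{lem:cut1}, each component $P_i$ is a tree with $|P_i|\le\lceil t/2\rceil$ vertices, hence (having a nonempty smaller colour class once $|P_i|\ge 2$) has colour imbalance $\delta_i\le |P_i|-2\le\lceil t/2\rceil-2$, and a one-line greedy sign-choice argument then caps the global imbalance by $\max_i\delta_i$. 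This pushes the bound $\delta_i\le|P_i|-2$ (valid only because $P_i$ is connected) through verbatim, and in fact yields the marginally sharper conclusion $|c_0|\le\frac{t+\lceil t/2\rceil}{2}-1$, which equals $\frac{3t-4}{4}$ for even $t$ and $\frac{3t-3}{4}$ for odd $t$; this matches the star-plus-path extremal example the paper gives after the lemma. The only slightly loose remark in your write-up is the claim that $\delta_i\ge t/2$ would be ``critical'': in fact even $\delta_i=\lceil t/2\rceil-1$ would still meet the stated bound $\frac{3t-1}{4}$ for odd $t$, so there is a bit more slack than you suggest. That affects only the commentary, not the proof. Overall the argument is sound and tidier than the original; its one disadvantage is that it does not reuse Lemma~\ref{lem:num}, which the paper needs elsewhere and hence gets ``for free''.
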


\begin{proof}
We apply Lemma \ref{lem:cut1} to obtain a cutvertex $z$ and a forest $T-z$ with components $\{T_i\}_{i=1}^m$ such that $|T_i|\leq \lceil\frac t2\rceil$, for every $i$. We will now use Lemma~\ref{lem:num} in order to group the components of $T-z$. Setting $a_i:=|T_i|$, the lemma yields three sets $I_1$, $I_2$ and $I_3$ such that the forests $F_j := \bigcup_{i\in I_j}T_i$, with $j=1,2,3$, cover at most $\lceil\frac t2\rceil$ vertices each. Also, the forest $F_1$ covers at least $\frac {t}3$ vertices.

For $j=1,2,3$ consider any proper $2$-colouring $c^j$ of the forest $F_j$, with colour classes $c_0^j$ and $c_1^j$, such that $F_1$ and $F_2$ each meet both colours. (This is possible unless $|F_1|$ and/or $|F_2|$ is $1$, and in that case we are done anyway.) For each $j$, we assume that $|c_0^j|\geq |c_1^j|$.\\
	
	We split the remainder of the proof into two cases.\\
	
\noindent\textbf{Case 1: }  $|c_0^1| \geq \frac{3|F_1| -1}{4}$.\\

 In this case, define the colouring $c$ by setting  $c_0 := c_0^1 \cup c_1^2 \cup c_1^3$ and $c_1:=V(T-z)\setminus{c_0} = c_1^1 \cup c_0^2 \cup c_0^3$. Then,
\begin{align*}
|c_0|  = |c_0^1| + |c_1^2| + |c_1^3|  & \leq |F_1| -1 + \frac{|F_2|}{2} + \frac{|F_3|}{2} \\ &= \frac{|F_1|}{2}-1 + \frac{|T-z|}{2} \\ & \leq \frac{t+1}{4}-1 + \frac{t}{2}\\ & \leq \frac{3t-1}{4}.
\end{align*}
Moreover,
$$|c_1|\leq t-|c_0^1|-|c_1^2| \leq t-\frac{3|F_1|-1}{4}-1\leq t-\frac{t-1}{4}-1\leq \frac{3t-1}{4}$$
	
\noindent where the  penultimate inequality comes from the fact that $|F_1|\geq \frac{t}{3}$. Hence, $\max\{|c_0|,|c_1|\}\leq \frac{3t-1}{4}$; renaming the colour classes if necessary we get the desired result.\\
	
\noindent\textbf{Case 2: }  $|c_0^1|< \frac{3|F_1|-1}{4}$. \\

In this case, define the colouring $c$ by setting  $c_0 := c_0^1\cup c_1^2 \cup c_0^3$ and $c_1:=V(T-z)\setminus{c_0} = c_1^1\cup c_0^2 \cup c_1^3$. Then, 
\begin{align*}
|c_0| & < \frac{3|F_1|-1}{4} + \frac{|F_2|}{2} + |F_3| \\
& = \frac{|T-z|}{2} + \frac{|F_1|+2|F_3|-1}{4}\\
& \leq \frac{t}{2} + \frac{|F_1|+|F_2|+|F_3|-1}{4}\\
& = \frac{3t-1}{4},
\end{align*}
and
$$|c_1| \leq \frac{|F_1|}{2}+|F_2|-1+\frac{|F_3|}{2}=\frac{t}{2}+\frac{|F_2|}{2} -1\leq \frac{3t-1}{4}.$$
	
Again we obtain $\max\{|c_0|,|c_1|\}\leq \frac{3t-1}{4}$, and swap colour classes if necessary.
\end{proof}

Let us  remark that the bound~$\frac{3t-1}{4}$ given by Lemma \ref{lem:cut4} for the imbalance of the colouring is best possible, if we insist that the cutvertex is as given by Lemma~\ref{lem:cut1}. This is illustrated by the following example.

For  $t$ divisible by four, consider the tree obtained by identifying the central vertex of a star of order~$\frac{t}{2}$ with an endvertex of a path of order $\frac{t}{2}+2$. The~$\frac t2$-separator $z$ provided by  Lemma~\ref{lem:cut1} leaves exactly two components: a path of order~$\frac{t}{2}$ and a star of order $\frac{t}{2}$. One of the colour classes of this forest necessarily contains $\frac{3t}{4}-1$ vertices. 

However, it is possible to cut the tree at a different cutvertex so that the resulting forest admits a  significantly more balanced colouring than the one given by Lemma \ref{lem:cut4}. This is the purpose of Proposition~\ref{prop:cut5} below. Before we state the proposition, let us introduce some useful notation.

\begin{definition}
Given a graph $G$ and a proper $2$-colouring of its vertex set $c:V(G)\rightarrow \{0,1\}$ we define the imbalance of $c$ as $$\sigma(c) := |c_0|-|c_1|.$$ For a tree $T$ we will use $\sigma(T)$ to denote the imbalance of its unique $2$-colouring. 
\end{definition}

\begin{proposition}\label{prop:cut5}
Let $T$ be a tree on $t+1$ vertices. Then there exists $z\in V(T)$ and a proper $2$-colouring $c:V(T-z)\rightarrow\{0,1\}$ of $T-z$ with $|c_1|\leq |c_0|$ such that $|c_0|\leq \frac{2t}{3}$ and $|c_1|\leq \frac{t}{2}$.
\end{proposition}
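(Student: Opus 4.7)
The plan is to split on $\sigma(T)$, the imbalance of the unique proper $2$-colouring of $T$. If $\sigma(T)\le t/3$, I would take $z$ to be any leaf of the majority colour class of $T$; such a leaf exists whenever $\sigma(T)\ge 1$, since otherwise every majority vertex has degree at least two and summing degrees over the $t$ edges of $T$ (each of which crosses the bipartition) gives $2|A_0|\le t\le 2|A_1|$, contradicting $|A_0|>|A_1|$; if $\sigma(T)=0$ any leaf works. Then $T-z$ is a tree with imbalance at most $t/3$, and its natural proper $2$-colouring immediately yields $|c_0|\le (t+t/3)/2 = 2t/3$ and $|c_1|\le t/2$.

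For the harder case $\sigma(T)>t/3$, I would apply Lemma~\ref{lem:cut1} to get a cutvertex $z_0$ so that each component $T_1,\ldots,T_m$ of $T-z_0$ has size at most $\lceil t/2\rceil$, and denote by $\sigma_i$ the imbalance of $T_i$. Since we are free to flip the bipartition of each $T_i$ independently when constructing the colouring of $T-z_0$, the overall imbalance equals $\sum_i\varepsilon_i\sigma_i$ for some signs $\varepsilon_i\in\{\pm 1\}$, and the task reduces to finding signs with $\left|\sum_i\varepsilon_i\sigma_i\right|\le t/3$. When $\max_i\sigma_i\le t/3$, this is achieved by the standard greedy (order the $\sigma_i$ in non-increasing order and, at each step, add the next term with the sign that drags the running sum back toward zero); the running sum then stays inside $[-\max_i\sigma_i,\,\max_i\sigma_i]$, so the final signed sum is at most $t/3$ in absolute value, and the resulting colouring satisfies $|c_0|\le 2t/3$ and $|c_1|\le t/2$.

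The only remaining obstruction is a single heavy component $T_{i^\star}$ with $\sigma_{i^\star}>t/3$; because $|T_{i^\star}|\le\lceil t/2\rceil$, the minority class of $T_{i^\star}$ then contains fewer than roughly $t/12$ vertices, so $T_{i^\star}$ is extremely star-like and absorbs most of $T$'s imbalance. I would then replace $z_0$ by a centroid $z^\star$ of $T_{i^\star}$: the components of $T_{i^\star}-z^\star$ each have size at most $|T_{i^\star}|/2\le t/4$ (hence imbalance below $t/3$), while the single ``upper'' component of $T-z^\star$ (containing $z_0$, every $T_j$ with $j\ne i^\star$, and the non-$z^\star$-side of $T_{i^\star}$) inherits imbalance essentially equal to $\sigma(T)-\sigma_{i^\star}$, which is controlled via Lemma~\ref{lem:num} applied to the $\sigma_j$ with $j\ne i^\star$ to bound it by $t/3$. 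The greedy argument then applies to the new decomposition. The hardest step is this reselection: ensuring that $z^\star$ simultaneously breaks the star-like $T_{i^\star}$ into balanced subpieces and leaves an upper component of imbalance at most $t/3$, which is where Lemma~\ref{lem:num} is essential.
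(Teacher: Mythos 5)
Your proposal splits on $\sigma(T)$ and the imbalances $\sigma_i$ of the components of $T-z_0$, which is an attractive constructive alternative to the paper's proof by contradiction. The first two regimes are clean and correct: if $\sigma(T)\le t/3$, removing a majority leaf gives a tree of imbalance $\sigma(T)-1$, and if $\sigma(T)>t/3$ but all $\sigma_i\le t/3$, the ``drag the running sum toward zero'' greedy achieves total imbalance at most $\max_i\sigma_i\le t/3$. That part, if anything, is more elementary than the paper's route through Lemma~\ref{lem:cut4}.

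The gap is in the last subcase, and it is genuine. First, the claim that the \emph{upper} component $U$ of $T-z^\star$ ``inherits imbalance essentially equal to $\sigma(T)-\sigma_{i^\star}$, which is controlled via Lemma~\ref{lem:num} applied to the $\sigma_j$ with $j\ne i^\star$'' does not hold up. Once the trees $T_j$ ($j\ne i^\star$) are absorbed into the single connected tree $U$, their relative orientations are \emph{fixed} by the bipartition of $U$: you cannot flip them independently, so Lemma~\ref{lem:num} (which produces a partition of a sequence into balanced groups) tells you nothing about $\sigma(U)$. Second, the upper component can in fact be heavy. Take $T$ to be a spider with centre $z_0$, $\lfloor t/2\rfloor$ pendant leaves at $z_0$, and one additional child $c$ that is the centre of a star with the remaining $\lceil t/2\rceil-1$ leaves. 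Then the $\lceil t/2\rceil$-separator from Lemma~\ref{lem:cut1} is $z_0$, the heavy component $T_{i^\star}$ is the star rooted at $c$ with $\sigma_{i^\star}\approx t/2$, and its centroid is $z^\star=c$. But $T-c$ has as its upper component the star $K_{1,\lfloor t/2\rfloor}$ centred at $z_0$, whose imbalance is $\approx t/2>t/3$; the supposed control fails. (In this particular example the greedy still happens to finish at $0$ because the singletons exactly offset $U$, but your stated bound ``final sum in $[-\max_i\sigma_i,\max_i\sigma_i]$'' only yields $\le t/2$, not $\le t/3$, so the argument as written does not close.)

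This is precisely where the paper's proof does its real work: it proceeds by contradiction, uses the hypothesis that no good colouring exists to establish the key inequality $\sum_{i\ne j^\star}\sigma(T_i)\le\lceil t/6\rceil$ (claim~\eqref{claim:balanced}), and only then re-cuts $T_{j^\star}$ via Lemma~\ref{lem:cut1}, tracking carefully both the piece $C_{z_0}$ of $T_{j^\star}-z_1$ that lands in the upper component and the groups $F^A,F^B$ obtained from Lemma~\ref{lem:num}, before splitting again into two subcases on $\sigma(T_{z_0})$. You would need some analogue of claim~\eqref{claim:balanced} to bound $\sigma(U)$; without it (or without the contradiction hypothesis that makes it available), the argument after the re-cut does not go through.
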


\begin{proof}
We may assume that $t>3$. Assume the proposition does not hold, that is, for every $z\in V(T)$ and every proper $2$-colouring  of $T-z$, the heavier colour class of $T-z$ contains more than $\frac{2t}{3}$ vertices.

Let $z_0\in V(T)$ and $c:V(T-z_0)\rightarrow\{0,1\}$ as given by Lemma \ref{lem:cut4}. By our assumption above, we know that $c_0$, the heavier colour class induced by~$c$, contains between $\frac {2t}3$ and $\frac{3t-1}{4}$ vertices, while $c_1$, the lighter colour class, contains between $\frac t4$ and $\frac{t}{3}$ vertices. 

Consider the set $\{T_i\}_{i\in I}$ of all components of $T-z_0$. Let $J\subseteq I$ be the set of all indices $j$ such that $T_j$ has more vertices in $c_0$ than in $c_1$.  So clearly,

\begin{equation}\label{claim:Jbalance}
\text{$\sum_{j\in J}\sigma(T_j)>\frac{t}{3}$.}
\end{equation}

We claim that moreover,
	
\begin{equation}\label{claim:balance}
\text{for each $J'\subseteq J$ either \  $\sum_{j\in J'}\sigma(T_j)<\frac{t}{12}$ \ or \ $\sum_{j\in J'}\sigma(T_j)>\frac{t}{3}$.}
\end{equation}
	
Indeed, if this were not true for some $J'\subseteq J$, we could invert the colours in all trees in $\{T_j\}_{j\in J'}$. This yields a colouring with both colour classes having at most $\frac{2t}{3}$ vertices, because $c_0$ lost at least $\frac{t}{12}$ vertices, and $c_1$ gained at most $\frac t3$ vertices. This contradicts our assumption, and thus proves~\eqref{claim:balance}.

\smallskip

Call a family $J'\subseteq J$ {\em small} if  $\sum_{j\in J'}\sigma(T_j)<\frac{t}{12}$, and {\em large} otherwise (that is, by~\eqref{claim:balance}, if $\sum_{j\in J'}\sigma(T_j)>\frac{t}{3}$). It is easy to see that 
\begin{equation}\label{claim:starstar}
\text{if $J'$ is large, then there is a $j\in J'$ such that $\{j\}$ is large.}
\end{equation}
	
 In particular, we deduce from~\eqref{claim:balance} and~\eqref{claim:starstar} that there is an index $j^*\in J$ such that $\{j^*\}$ is large. That is, $\sigma(T_{j^*})>\frac{t}{3}$. Notice that there is only one such index
 as otherwise we could switch colour classes in one of the associated trees
 and obtain a contradiction to the initial assumption. So, by~\eqref{claim:starstar},
 \begin{equation}\label{claim:diamond}
\text{$J\setminus{\{j^*\}}$ is small.}
\end{equation}
 
We claim that
\begin{equation}\label{claim:balanced}
\text{$\sum_{i\in I\setminus{\{j^*\}}} \sigma(T_i) \leq \left\lceil\frac{t}{6}\right\rceil$.}
\end{equation}
Indeed, otherwise we could switch colours in every tree $T_j$ with  $j\in J\setminus{\{j^*\}}$ to obtain a new colouring of $T-z_0$. In this new colouring, $c_1$ has gained less than $\frac t{12}$ vertices (by~\eqref{claim:diamond}), and thus stays below $\frac t2$. Moreover, $c_1$ now contains the larger bipartition class of every tree $T_i$ with $i\in I\setminus{\{j^*\}}$, and thus has at least
$$ \sum_{i\in I\setminus{\{j^*\}}}\left(\frac{|T_i|}{2}+\frac{\sigma(T_i)}{2}\right)>\frac{t-|T_{j^*}|}{2} + \frac{\lceil\frac{t}{6}\rceil}{2} \geq \frac{t-1}{4}+\frac{t+3}{12} = \frac{t}{3}$$
vertices.
Therefore, $c_0$ now has less than $\frac{2t}{3}$ vertices, a contradiction to our assumption that no such colouring exists. This proves~\eqref{claim:balanced}.\\
	
Now, apply Lemma \ref{lem:cut1} to obtain $z_1\in V(T_{j^*})$ such that every component of $T_{j^*}-z_1$ covers at most $\lceil\frac{|T_{j^*}|-1}{2}\rceil$ vertices. Let $T_{z_0}$ denote the component of $T-z_1$ that contains $z_0$ and let~$\{C_{\ell}\}_{\ell\in L}$ denote the set of all other components of $T-z_1$. Further, 
let~$C_{z_o}$ denote the unique component of $T_{j^*}-z_1$ that is contained in $T_{z_0}$, if such a component exists. Observe that Lemma \ref{lem:cut1} allows us to assume that 
\begin{equation}\label{sizeCz0}
|C_{z_o}|\leq \left\lfloor\frac{|T_{j^*}|-1}{2}\right\rfloor  \leq \left\lfloor\frac{\lceil\frac t2\rceil-1}{2}\right\rfloor \leq \frac{t-1}4.
\end{equation}

Next, group the elements of $\{C_{\ell}\}_{\ell\in L}$ into  two forests $F^A$ and~$F^B$ fulfilling 
\begin{equation}\label{claim:FAFB}
\text{ $\max\{|F^A|,|F^B|\}\leq \frac{t+1}{3}$,}
\end{equation}
which is possible by Lemma \ref{lem:num}~(ii), and since $\max\{\frac{|T_{j^*}|-1}{2},\frac{2}{3}|T_{j^*}|\}\leq \frac{t+1}{3}$.

For $i=A,B$, consider the proper $2$-colouring $c^i$ induced by $T_{j^*}$ on $F^i$. 
By symmetry, we may assume that
\begin{equation}\label{sigmaAsigmaB}
\text{$\sigma(c^A)\geq \sigma(c^B)$.}
\end{equation}

Observe that by~\eqref{claim:Jbalance}, and by the choice of $c^i$, we have that
$$\frac t3<\sigma(T_{j^*}) \leq \sigma_{C_{z_0}\cup\{z_1\}}+ \sigma(c^A) + \sigma(c^B),$$ 
where $\sigma_{C_{z_0}\cup\{z_1\}}$ denotes the imbalance that $T_{j^*}$ induces on $C_{z_0}\cup\{z_1\}$. Note that $\sigma_{C_{z_0}\cup\{z_1\}}\leq \max\{|C_{z_o}|,1\}$, Therefore, 
	
\begin{equation}\label{claim:unbalanced2}
	\sigma(c^A) + \sigma(c^B) > \frac{t}{3}-\max\{|C_{z_o}|,1\}. 
\end{equation}
	
Now we consider and separately treat  two possible cases, according to the imbalance of the canonical colouring of  $T_{z_0}$. For convenience, let $A(T_{z_0})$ denote the larger colour class of $T_{z_0}$ in this colouring, and let  by $B(T_{z_0})$ denote the smaller colour class.\\
	
\noindent\textbf{Case 1: }$\sigma(T_{z_0})\leq \frac{t}{3}$.\\
	
In this case, define a new colouring $c'$ by setting $c_0':=A(T_{z_0})\cup c_1^A\cup c_0^B$ and $c_1':=(T-z_1)\setminus{c_0'} = B(T_{z_0})\cup c_0^A\cup c_1^B$. Then, by~\eqref{sigmaAsigmaB},
$$|c_0'|=\frac{|T_{z_0}|}{2}+\frac{\sigma(T_{z_0})}{2}+\frac{|F^A|}{2}-\frac{\sigma(c^A)}{2}+\frac{|F^B|}{2}+\frac{\sigma(c^B)}{2} \leq \frac{t}{2} + \frac{\sigma(T_{z_0})}{2} \leq \frac{2t}{3}, $$
and moreover, by~\eqref{claim:FAFB},

$$|c_1'|< \frac{|T_{z_0}|}{2} + \max\{|F^A|-1,1\} + \frac{|F^B|}{2}  \leq \frac{2t}{3},$$
and hence after possibly swapping colours we found a colouring as desired for the proposition (with $z_1$ in the role of $z$). This is a contradiction, since we assumed no such colouring exists.\\

\noindent\textbf{Case 2: }$\sigma(T_{z_0})> \frac{t}{3}$.\\
	
This time we define $c'$ by setting $c_0':=A(T_{z_0})\cup c_1^A\cup c_1^B$ and $c_1':=B(T_{z_0})\cup c_0^A\cup c_0^B$. Let $\sigma_{C_{z_0}\cup\{z_0\}}$ denote the imbalance that $T_{z_0}$ induces on $C_{z_0}\cup\{z_0\}$
and note that by~\eqref{sizeCz0}, we have that $$\sigma_{C_{z_0}\cup\{z_0\}}\leq \max\{|C_{z_o}|,1\}\leq\frac{t-1}4.$$ Recalling~\eqref{claim:balanced} and~\eqref{claim:unbalanced2}, we obtain
\begin{align*}
|c_0'| & =\frac{t}{2}+\frac{\sigma(T_{z_0})-(\sigma(c^A)+\sigma(c^B))}{2} \\
& \leq \frac{t}{2} + \frac{\sum_{i\in I\setminus{\{j^*\}}}\sigma(T_i)+\sigma_{C_{z_0}\cup\{z_0\}}+\max\{|C_{z_o}|,1\}-\frac{t}{3}}{2} \\
& \leq \frac{t}{2}  + \frac{t+2}{12} +\max\{|C_{z_o}|,1\} - \frac{t}{6} \\
& \leq \frac{2t}{3}.
\end{align*}
Furthermore, by~\eqref{claim:FAFB},
$$|c_1'| \leq \frac{|T_{z_0}|}{2}-\frac{\sigma(T_{z_0})}{2} + |c_0^A|+|c_0^B| \leq \frac{2t}{3},$$
We thus again obtain a contradiction.
\end{proof}

\section{Embedding trees in robust components}\label{sec:proof1}
In this section, we discuss the embedding of trees into a large robust component of some host graph, by which we mean we embed into graphs whose corresponding reduced graph has a large connected component. The arguments depend on whether  the reduced graph is bipartite or not, and hence we deal with these situations separately. 

The main results from this section are Propositions~\ref{prop:bipartite} and~\ref{prop:non bipartite} and their corollaries. They  will be used in the proof of Proposition~\ref{prop:connected-cte}, our main embedding result for robust components. Moreover, they will be one of the tools in the proof of our key embedding lemma, Lemma~\ref{lem:superlemma}, on which most
 of our main results rely.

\subsection{The bipartite case}
As we mentioned in the introduction, any tree with $k$ edges greedily embeds in any graph of minimum degree at least~$k$. In a bipartite graph $H=(X,Y;E)$ the minimum degree condition can be relaxed: If the tree $T$ has bipartition classes of sizes $k_1$ and $k_2$, then
it is clearly enough to require the vertices from $X$ to have degree at least $k_1$ and the vertices from $Y$ to have degree at least $k_2$. In particular, if $\deg(x)\ge \lfloor\frac{k}{2}\rfloor$ for all $x\in X$, and  $\deg(y)\ge k$ for all $y\in Y$, then each tree with~$k$ edges embeds in $H$. 

If the tree we wish to embed has bounded degree, and the  host graph has an $(\varepsilon,\eta)$-reduced graph which is bipartite and connected, for some $\eps$ and $\eta$, one can do even better: We will now show that in this case it is enough to  require a minimum degree of roughly $\frac k2$ for the vertices in only one of the bipartition classes, as long as this class is not too small.

\begin{proposition}\label{prop:bipartite}
For all $\varepsilon\in (0,10^{-8})$ and for all $d, M_0\in\NN$, there is $k_0$ such that for all $n,k\geq k_0$ 
  the following holds. Let $G$ be an $n$-vertex  graph,  with $(\varepsilon,5\sqrt\varepsilon)$-reduced graph  $R$ that satifies $|R|\le M_0$, such that
\begin{enumerate}[(i)]
\item $R=(X,Y)$ is bipartite and connected;\label{bip_con}
\item $\diam(R)\leq d$;\label{diameter}
\item  $\deg(x)\ge (1+100\sqrt\varepsilon)\frac{k}{2}\cdot \frac{|R|}n$, for all $x\in X$; and \label{mindegX}
\item  $|X|\ge (1+100\sqrt{\varepsilon})k\cdot \frac{|R|}n$.\label{sizeX}
 \end{enumerate}
Then $G$ contains every tree $T$ with $k$ edges and $\Delta(T)\le k^{\tfrac{1}{d}}$ as a subgraph. 
\end{proposition}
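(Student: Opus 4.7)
My plan is to apply Proposition~\ref{prop:cut3} to decompose $T$ into a constant number of seeds and small pieces, then embed each piece into a suitable regular pair of $R$ via Lemma~\ref{lem:T1}, using the bounded diameter of $R$ to route between pairs when necessary.

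First, fix $\beta$ with $\beta \ll \varepsilon/(M_0 d)$ and apply Proposition~\ref{prop:cut3} to $T$, rooted at a leaf of the larger of its bipartition classes $A, B$ (so $|A| \geq |B|$). This yields a seed set $S$ with $|S| \leq 1/\beta + 2$ and a family $\mathcal{P}$ of pieces with $|P| \leq \beta k$ each. Since $R$ is bipartite and connected, any embedding of $T$ into $G$ must map $A$ entirely into $X$ and $B$ entirely into $Y$. Conditions~\eqref{mindegX} and~\eqref{sizeX} are tailored exactly for this: the total number of $G$-vertices in $X$-clusters is at least $(1+100\sqrt\varepsilon)k$, which exceeds $|A| \leq k$; and each $X$-cluster sees at least $(1+100\sqrt\varepsilon)k/2$ $G$-vertices in its $Y$-neighbourhood, which accommodates the at most $\lceil(k+1)/2\rceil$ vertices of $B$ that must fit on the other side.

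I process the seed-skeleton of $T$ in BFS order. The root seed is placed into a typical vertex of a sufficiently empty cluster of the correct parity. For each subsequent seed $s$ with parent seed $s'$ already placed at $\phi(s') \in C_{s'}$, I must embed the connecting piece $P$ (with root $r_P$ adjacent to $s'$ in $T$) and then the seed $s$ itself. I pick a regular pair $(X_i, Y_j)$ of $R$ with significant free space, into which $P$ will be embedded by Lemma~\ref{lem:T1}. If this pair is adjacent to $C_{s'}$ in the right way, I embed $P$ directly, using a typical neighbour of $\phi(s')$ in the appropriate cluster as $\phi(r_P)$. Otherwise, I use $\diam(R) \leq d$ to find a cluster-path of length $\ell \leq d$ in $R$ from $C_{s'}$ to $(X_i, Y_j)$ and embed a root-to-leaf prefix of $P$ along it; the bound $\Delta(T) \leq k^{1/d}$ guarantees that a piece of size $\beta k$ contains a root-to-leaf path of length at least $d - O(1)$, which together with the edge $s'r_P$ supplies enough length. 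I maintain a reserved buffer of $\varepsilon m$ vertices per cluster to ensure typical vertices remain available.

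The main obstacle is coordinating parities and cluster loads: the bipartite structure of $T$ must align with the $(X,Y)$-bipartition of $R$ at every step, and the asymmetric degree hypothesis (only on $X$) means that seeds embedded in $Y$-clusters must route back through their adjacent $X$-cluster rather than via arbitrary $Y$-neighbours. Bookkeeping shows that the total overhead from the at most $|S| = O(1)$ routings is $O(d/\beta) = O(1)$ extra vertices, comfortably absorbed into the $100\sqrt\varepsilon$ slack of conditions~\eqref{mindegX} and~\eqref{sizeX}. The resulting embedding uses at most $k + O(1)$ vertices, which fits within the capacity guaranteed by the hypotheses.
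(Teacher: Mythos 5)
Your high‑level plan (decompose via Proposition~\ref{prop:cut3}, embed pieces into regular pairs with Lemma~\ref{lem:T1}, route between pairs using $\diam(R)\le d$, put the heavier bipartition class of $T$ into $X$) matches the paper's strategy, but the routing step contains a genuine gap. You propose to ``embed a root-to-leaf prefix of $P$ along'' a cluster-path from the cluster of $\phi(s')$ to a good pair, and you justify its existence by arguing that a piece of size $\beta k$ and maximum degree $k^{1/d}$ has a root-to-leaf path of length at least $d-O(1)$. There are two problems. First, pieces only have size \emph{at most} $\beta k$; Proposition~\ref{prop:cut3} produces pieces of all sizes down to one vertex, and a tiny piece has no long root-to-leaf path, so the premise fails for a large fraction of pieces. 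Second, and more fundamentally, routing a single root-to-leaf path along the cluster-path leaves undone the task of embedding all the \emph{off-path} vertices of the first few levels: a vertex at level $i$ on the path may have up to $k^{1/d}-1$ children that fork off, and your construction never says where these go. If you placed them in the intermediate clusters without accounting for their cumulative size across all $|S|$ pieces and all $d$ clusters, you would have no guarantee the intermediate clusters stay significant and regular.

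The paper resolves both issues by taking the opposite tack: it reserves in every cluster a dedicated \emph{$L$-slice} of size $\lceil 10\sqrt\eps\, m\rceil$ and embeds the \emph{entire} first $h-1$ levels of each piece $P$ (not just a path) into the $L$-slices of the clusters along the routing path, with the remaining levels going into $C$-slices of the good pair. The degree bound $\Delta(T)\le k^{1/d}$ enters precisely to bound the total mass dumped into all $L$-slices over the whole procedure: with at most $|S|\le 4/\beta$ seeds, each contributing at most $\Delta(T)$ pieces, each with at most $2(\Delta(T)-1)^{d-2}$ vertices in its first $d-1$ levels, the total is at most $\frac{4}{\beta}k^{(d-1)/d}<\varepsilon m$ for $k\ge k_0$. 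This is the bookkeeping your proposal gestures at but does not perform, and without it the claim that the ``resulting embedding uses at most $k+O(1)$ vertices'' does not follow. Note also that pieces of depth less than $d$ are unproblematic under this scheme: they simply live entirely in $L$-slices, and the mass bound still caps them. If you replace your root-to-leaf routing with full-level routing through reserved $L$-slices and carry out the mass computation, your argument aligns with the paper's proof.
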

	
\begin{proof}
Given $\eps,d$ and $M_0$ as in the Theorem, we set
$$k_0:= \left( \frac{8M_0^2}{\eps^2} \right)^d.$$

Let $G$ be a graph as in Proposition~\ref{prop:bipartite}, let $X_1\cup\dots\cup X_s$ and $Y_1\cup\dots\cup Y_t$ be the $(\varepsilon,5\sqrt{\varepsilon})$-regular partition of $G$  corresponding to the reduced graph $R$ (in particular $s+t\le M_0$). Set  $m:=|X_i|=|Y_j|$ (for any $i,j$). 

For each $i\in[s]$, we arbitrarily partition $X_i$ into three sets $X_{i,S}$, $X_{i,L}$, $X_{i,C}$; and for each $j\in[t]$ we arbitrarily partition~$Y_j$ into three sets $Y_{j,S}$, $Y_{j,L}$, $Y_{j,C}$, such that 
$$|X_{i,S}|=|X_{i,L}|=
|Y_{j,S}|=|Y_{j,L}|=\lceil10\sqrt{\varepsilon} m\rceil .$$
The letters $S$, $L$ and $C$ stand for seeds, links and clusters, respectively (sets $X_{i,C}$ and $Y_{j,C}$ contain the bulk of the clusters). We also call these subsets the $L$-, $S$- or $C$-slice of the corresponding cluster.

Note that, by Fact~\ref{fact:1}, for every $(X_i,Y_j)$ with positive density, each of the pairs $(X_{i,K},Y_{j,K'})$, with $K,K'\in\{S,L,C\}$, is $\frac{\sqrt{\varepsilon}}{5}$-regular with density greater than $4\sqrt{\varepsilon}$.

Root $T$ at any vertex $r(T)$. By Proposition~\ref{prop:cut3}, with parameters $\beta=\frac\varepsilon{s+t}$, we obtain a decomposition of  $T$ into a collection of pieces $\mathcal{P}$, each of order at most $\beta k$, and a family of seeds $S$ of size at most $\frac 2\beta$. Order the elements from $S\cup\mathcal P$ in a way that the first element is $r(T)$, and the parent of each element is either an earlier seed or belongs to an earlier piece.

\smallskip

Our plan is to embed the  elements from $S\cup\mathcal P$ in this order. Seeds will go to $S$-slices of appropriate clusters $X_{i,S}$ or $Y_{j,S}$, with $r(T)$ going to cluster $X_i$ if $r(T)$ belongs to the heavier bipartition class of $T$,
and going to $Y_j$ otherwise.
Pieces from $\mathcal P$ will go into $C$-slices $(X_{i,C},Y_{j,C})$ of appropriate pairs $(X_i,Y_j)$, and into $L$-slices of other clusters. 

More precisely, for each piece $P\in \mathcal P$ we will find a pair  $(X_i,Y_j)$ such that there is enough space left in $(X_{i,C},Y_{j,C})$ to accommodate $P$. At this point, the parent of $P$ is already embedded into some cluster $Z$, so we need to embed part of $P$ into a path $ZZ_1Z_2Z_3\ldots Z_h$ that connects $Z$ with the pair $(X_{i},Y_{j})$. Because of the bounded degree of $T$, and since the diameter of $G$ is also bounded, this path can be chosen short enough to ensure that the levels of $P$ that are embedded into this path only contain relatively few vertices. So we can use the $L$-slices of the clusters $Z_{\ell}$ for these levels. The remaining levels of $P$ will be embedded into the free space of $(X_{i,C},Y_{j,C})$.

\smallskip

Let us make this sketch more precise. 
During the embedding procedure, we will write $X'_{i,C}$ and $Y'_{j,C}$ for the set of unoccupied vertices of $X_{i,C}$ and $Y_{j,C}$ respectively. We will say that a pair $(X_i,Y_j)$ is \textit{good} if $d(X_i,Y_j)>0$ and $\min\{|X'_{i,C}|,|Y'_{j,C}|\}\ge 5\sqrt\varepsilon m$. Hence we will be able to apply  Lemma \ref{lem:T1} to any good pair and any piece belonging to $\mathcal{P}$. 

The embedding $\phi:V(T)\to V(G)$ will be constructed iteratively, following the embedding order of $S\cup\mathcal P$ chosen above. Employing the strategy explained above, we make sure that at every step, the following conditions will be satisfied: 

\begin{enumerate}[(A)]
\item\label{bip:0} Each vertex is embedded into a neighbour of the image of its already embedded parent;
	\item\label{bip:1} each $s\in S$ is embedded into the $S$-slice of some cluster;
	\item\label{bip:2} for each $P\in  \mathcal P$, the first (up to $d-1$) levels are embedded into the $L$-slices of some clusters, and the rest goes into the $C$-slices;
	 and
	\item\label{bip:3} every  $v\in V(T)$ is mapped into a vertex that is typical towards both the $S$-slice and the $L$-slice of some adjacent cluster. 
\end{enumerate}

Since the set $S$ has constant size, and since we do not particularly care into which cluster a seed goes, as long as it goes to the $S$-slice, it is clearly possible to embed a seed $s$, when its time comes, satisying conditions~\eqref{bip:0}, ~\eqref{bip:1} and~\eqref{bip:3}.

So assume we are about to embed a piece $P\in \mathcal P$. The parent of the root $r(P)$ of $P$ is already embedded into some vertex that is typical with respect to the $L$-slice of some cluster $Z_1$. 
In order to be able to embed $P$ according to our plan, it suffices to ensure that
	
\begin{enumerate}[(I)]
\item\label{claim:1} there exists some good pair $(X_i,Y_j)$;
\item \label{condi_path} there is a path $Z_1Z_2Z_3\ldots Z_h$ of length $h\leq d$ from $Z_1$ to $X_i$;
\item\label{condi_Lfits} the union of the first $h-1$ levels of $P$ is small enough to fit into the free space in the $L$-slices of $\{Z_1, Z_2, Z_3, \ldots , Z_{h-1}\}$.
\end{enumerate}

If we can assure these properties, we can repeatedly apply Lemma~\ref{lem:T1} to embed the first levels of $P$ into the $L$-slices of the clusters $Z_\ell$, and  the remaining levels of $P$ into $(X'_{i,C},Y'_{j,C})$ in a way that ~\eqref{bip:0}, ~\eqref{bip:2} and~\eqref{bip:3} hold.

\smallskip

So, let us  prove~\eqref{claim:1}.
We first note that there exists some cluster $X_i$ such that $|\phi^{-1}(X_{i,C})|<|X_{i,C}|-5\sqrt{\varepsilon}m$. Indeed, otherwise we have used at least
$$(1-25\sqrt{\varepsilon})|X|-5\sqrt{\varepsilon}|X|\ge (1-30\sqrt{\varepsilon})(1+100\sqrt{\varepsilon})k>(1+2\sqrt{\varepsilon})k>k+1$$
vertices from $X$ already, a contradiction, since $|T|=k+1$. 
 
Next, we claim there exists some cluster $Y_{j}$ such that $(X_i,Y_j)$ is good. If this was not the case, then we have used at least
$$(1-30\sqrt{\varepsilon})|N(X_i)|\ge (1-30\sqrt{\varepsilon})(1+100\sqrt{\varepsilon})\frac{k}{2} >(1+2\sqrt{\varepsilon})\frac{k}{2} > \frac{k+1}2$$
vertices of $Y$ already, a contradiction, as we placed the root $r(T)$ of $T$ in a way that guaranteed we would embed the smaller bipartition class of $T$ into~$Y$.

Observe that~\eqref{claim:1} implies~\eqref{condi_path}, because of condition~\eqref{diameter} of Proposition~\ref{prop:bipartite}. So it only remains to prove~\eqref{condi_Lfits}.

Using \eqref{bip:2} for already embedded pieces $P'$, and using  the fact that, for any such piece $P'$, the number of vertices in their first $d-1$ levels is bounded by $2(\Delta(T)-1)^{d-2}$ (except if $\Delta(T)\leq 2$, in which case this number is bounded by~$d-1$),
we have that the total number of occupied vertices in $L$-slices  is at most 
$$|S|\cdot \Delta(T)\cdot 2 (\Delta(T)-1)^{d-2} \ \leq \ \frac 4\beta \cdot k^{\frac{d-1}{d}} \ \le \frac {4M_0}\eps \cdot k^{\frac{d-1}{d}}\ < \eps \frac{k}{2M_0}\ \le \ \varepsilon m$$ for $k\ge k_0$. In particular, each $L$-slice of a cluster $Z_{\ell}$ has at least $\lceil9\sqrt{\varepsilon} m\rceil$ unused vertices.
This is enough to ensure that the first $h-1$ levels of $P$ fit  into the $L$-slices of the clusters $Z_1$, $Z_2$, $Z_3, \ldots , Z_{h-1}$. This proves~\eqref{condi_Lfits}.
\end{proof}

\begin{remark}\label{rem:bipartite}
It is easy to see that instead of conditions~\eqref{mindegX} and~\eqref{sizeX} from Proposition~\ref{prop:bipartite}  we could use the weaker requirement that there is a set $\mathcal{C}$ of clusters in $X$ such that $\deg(x)\ge (1+100\sqrt\eps)\frac{k}{2}\cdot \frac{|R|}n$, for all $x\in V(\mathcal{C})$, and $|V(\mathcal{C})|\ge (1+100\sqrt \eps)k\cdot \frac{|R|}n$. 
\end{remark}

\begin{remark}\label{rem:bipartite2}
Observe that  Proposition~\ref{prop:bipartite} remains true with the following additional conditions. Let $U\subseteq V(G)$ such that 
\begin{itemize}
\item $|U|+|T|\leq k+1$;
\item $|U\cap \bigcup X|+c_0(T)\leq k$; and 
\item $|U\cap \bigcup Y|+c_1(T)\leq \frac{k}{2}$,
\end{itemize}
where $c_0(T)$ and $c_1(T)$ are the two colour classes of $T$.\\
Then $T$ can be embedded into $G$ avoiding $U$, that is, we can embed $T$ in such a way $\phi(V(T))\subset V(G)\setminus U$.  
\end{remark}

Moreover, observe that by repeatedly applying Proposition~\ref{prop:bipartite} together with Remark~\ref{rem:bipartite2}, we can actually embed a forest instead of a tree.
We say that a forest $F$, with colour classes $C_1$ and $C_2$, is a {\em $(k_1,k_2,t)$-forest } if
\begin{enumerate}
	\item $|C_i|\leq k_i$ for $i=1,2$, and
	\item $\Delta( F)\leq (k_1+k_2)^{t}$.\end{enumerate}

\begin{corollary}\label{emb:forest}
For all $\varepsilon\in (0,10^{-8})$ and for all $d,M_0\in\NN$ there is $k_0$ such that for all $n,k_1, k_2\geq k_0$ 
 the following holds. 
Let $G$ be an $n$-vertex having a $(\varepsilon,5\sqrt\varepsilon)$-reduced graph $R$ that satisfies $|R|\le M_0$, such that
\begin{enumerate}[(i)]
\item $R=(X,Y)$ is connected and bipartite;
\item $\diam(R)\leq d$;
\item $\deg(x)\ge (1+100\sqrt\varepsilon)k_2 \cdot \frac{|R|}n$, for all $x\in X$; and
\item $|X|\ge (1+100\sqrt\varepsilon)k_1 \cdot \frac{|R|}n$.
\end{enumerate}
Then any $(k_1,k_2,\frac 1d)$-forest $ F$, with colour classes $C_1$ and $C_2$, can be embedded into $G$, with $C_1$ going to $\bigcup X$ and $C_2$ going to $\bigcup Y$. \\
Moreover, if ${F}$ has at most $\tfrac{\eps n}{|R|}$ roots, then the images of the roots going to $\bigcup X$ can be mapped to any prescribed set of size at least $2\varepsilon|\bigcup X|$ in $X$, and the images of the roots going to $\bigcup Y$ can be mapped to any prescribed set of size at least $2\varepsilon|\bigcup Y|$ in $Y$.
\end{corollary}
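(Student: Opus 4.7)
The plan is to embed the components of $F$ one at a time, iteratively applying Proposition~\ref{prop:bipartite} via Remark~\ref{rem:bipartite2}. Enumerate the components of $F$ as $T_1, \dots, T_m$; let $T_j$ have colour classes of sizes $a_j$ (in $C_1$) and $b_j$ (in $C_2$), so that $\sum_j a_j \leq k_1$ and $\sum_j b_j \leq k_2$. At step $j$ we embed $T_j$ while avoiding the set $U$ of vertices used by the previous steps.

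The essential task is to transport the proof of Proposition~\ref{prop:bipartite} from the symmetric regime (bounds $k$ and $k/2$) to the asymmetric one (bounds $k_1$ and $k_2$). Inspection of the original proof shows that the two hypotheses play independent roles: the $|X|$-bound drives the pigeonhole step that finds a cluster $X_i$ with enough free space in its $C$-slice, while the degree bound drives the analogous step for finding a partner cluster $Y_j$. The contradictions used there, namely $|T \cap \bigcup X| \leq k$ and $|T \cap \bigcup Y| \leq \lceil k/2 \rceil$, translate cleanly into $|F \cap \bigcup X| \leq k_1$ and $|F \cap \bigcup Y| \leq k_2$. Remark~\ref{rem:bipartite2} provides the mechanism for avoiding $U$, and its hypotheses $|U \cap \bigcup X| + a_j \leq k_1$ and $|U \cap \bigcup Y| + b_j \leq k_2$ are preserved throughout because $\sum_j a_j \leq k_1$ and $\sum_j b_j \leq k_2$.

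For the \emph{moreover} part, I would exploit the freedom in partitioning each cluster into $S$-, $L$- and $C$-slices: I would choose this partition so that $P_X \cap X_{i,S}$ (respectively $P_Y \cap Y_{j,S}$) is as large as possible in each cluster, where $P_X \subseteq \bigcup X$ and $P_Y \subseteq \bigcup Y$ denote the prescribed sets. Since roots are seeds and therefore land in $S$-slices, a pigeonhole estimate using $|P_X| \geq 2\eps |\bigcup X|$, together with the bound $\eps n/|R|$ on the total number of roots, shows that at every step of the iteration some cluster $X_i$ still contains a typical vertex in $(P_X \cap X_{i,S}) \setminus U$ that can host the root of $T_j$; the argument for $P_Y$ is symmetric.

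The main technical obstacle, and the reason the argument is not entirely routine, is cumulative bookkeeping across the $m$ components: the total seed count is at most $\sum_j (1/\beta + 2)$ (by Proposition~\ref{prop:cut3} applied to each $T_j$ with $\beta = \eps/|R|$ as in the original proof), and each seed contributes at most $O((k_1+k_2)^{(d-1)/d})$ link vertices along its connecting path. These totals must fit within the $S$- and $L$-slice capacities of size $\lceil 10\sqrt\eps m \rceil$, which can be arranged by choosing $k_0$ large enough and by treating components too small to decompose as a single piece requiring only their root as a seed.
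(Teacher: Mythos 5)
Your approach --- embed the components of $F$ one at a time via Proposition~\ref{prop:bipartite}, invoking Remark~\ref{rem:bipartite2} to avoid the previously used vertices $U$, and noting that the proof of Proposition~\ref{prop:bipartite} uses the hypotheses on $|X|$ and on $\deg(x)$ independently so that the $(k,\lceil k/2\rceil)$ bounds transport to $(k_1,k_2)$ --- is exactly the paper's intended argument, and your accounting of Remark~\ref{rem:bipartite2}'s hypotheses across the iteration via $\sum_j a_j \le k_1$, $\sum_j b_j \le k_2$ is correct. Adjusting the slicing so that the prescribed sets sit inside $S$-slices is also the right idea for the \emph{moreover} clause.

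Where the proposal does not close is precisely the seed and link bookkeeping, which you flag but do not actually resolve. Decomposing each component $T_j$ with the \emph{same relative} parameter $\beta = \eps/|R|$, Proposition~\ref{prop:cut3} gives up to $1/\beta + 2 = |R|/\eps + 2$ seeds \emph{per component}, independently of $t_j = |T_j|-1$. When the number of components $q$ is as large as $\eps n/|R| \approx \eps m$ (the case used in the applications), your total $\sum_j(1/\beta+2) = q(|R|/\eps+2)$ is of order $m|R|$, which exceeds the total $S$-slice capacity of order $\sqrt\eps\,m|R|$; increasing $k_0$ does not help since both quantities scale linearly with $m$. Your fallback of ``treating components too small to decompose as a single piece'' does not save the argument, because a component of any size between roughly $|R|/\eps$ and $\eps m$ is still decomposed and still contributes $\Theta(|R|/\eps)$ seeds, and there can be $\Theta(\eps m)$ of those. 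The same issue affects the $L$-slices: with your parameters each component can contribute up to $2\Delta(F)^{d-1}$ link vertices, again overflowing once $q$ is large. The correct fix is to decompose at an \emph{absolute} piece-size threshold $\eps m$ (take $\beta_j = \eps m/t_j$ when $t_j > \eps m$), and to embed components of size at most $\eps m$ entirely into one good pair via Lemma~\ref{lem:T1}, without engaging the $S$/$L$-slice machinery at all; then the number of non-root seeds is $\sum_j t_j/(\eps m) \le (k_1+k_2)/(\eps m)$, a constant, the roots cost only $q \le \eps m \ll \sqrt\eps\,m|R|$ seed vertices, and the link-vertex totals close after the usual choice of $k_0$.
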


\begin{remark}\label{rem:bipartite2F}
An analogue of Remark~\ref{rem:bipartite2} holds for the situation of Corollary~\ref{emb:forest}.
\end{remark}

\smallskip

It is easy to see that we can bound the balancedness of trees whose maximum degree is bounded by some constant $\Delta$. See Section~\ref{sec:const} for an example of the most unbalanced such tree.

So it comes as no surprise that for the class of all constant degree trees, it is possible to
 show the following improvement of Proposition~\ref{prop:bipartite}. 

\begin{corollary}\label{cor:bipartite_bounded}
For all $\varepsilon\in (0,10^{-8})$, $d,M_0\in\NN$ and $\Delta\geq2$ there is a~$k_0$ such that for all $n,k\geq k_0$ 
 the following holds. 
Let $G$ be an $n$-vertex  graph that has an $(\varepsilon,5\sqrt{\varepsilon})$-reduced graph $R$ that satisfies $|R|\le M_0$, such  that
\begin{enumerate}[(i)]
\item $R=(X,Y)$ is connected and bipartite;
\item  $\diam(R)\leq d$;
\item $\deg(x)\ge (1+100\sqrt\varepsilon)\frac{k}{2}\cdot \frac{|R|}n$ for all $x\in X$;
\item $|X|\ge (1+100\sqrt{\varepsilon})\frac{(\Delta-1)}{\Delta}k\cdot \frac{|R|}n$.
\end{enumerate}
Then $G$ contains every tree $T$ with $k$ edges and $\Delta(T)\le \Delta$ as a subgraph. 
\end{corollary}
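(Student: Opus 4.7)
My plan is to derive the corollary by revisiting the proof of Proposition~\ref{prop:bipartite}, the only substantive change being an \emph{a priori} bound on the balancedness of $T$ that comes from the maximum-degree hypothesis. Concretely, let $T$ be a $k$-edge tree with $\Delta(T)\le\Delta$, and let $A,B$ denote its bipartition classes, with $|A|\le|B|$. Since every edge of $T$ joins $A$ to $B$, and since each vertex of $A$ has degree at most $\Delta$, we get $k=e(T)\le\Delta|A|$, so $|A|\ge k/\Delta$ and thus
\[
|B|\le\tfrac{\Delta-1}{\Delta}k+1.
\]

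Second, I would root $T$ at an arbitrary vertex of the heavier class $B$ (so that $B$ is to be embedded into $\bigcup X$ and $A$ into $\bigcup Y$) and then follow the proof of Proposition~\ref{prop:bipartite} line by line: set up the regular partition, partition each cluster into $S$-, $L$- and $C$-slices, apply Proposition~\ref{prop:cut3} with $\beta=\varepsilon/|R|$ to cut $T$ into seeds and tiny pieces, and embed seeds and pieces greedily in the prescribed order using Lemma~\ref{lem:T1}, maintaining conditions~\eqref{bip:0}--\eqref{bip:3}. The set-up and the connecting-path argument are insensitive to our refined balance bound, since they depend only on $\Delta(T)\le k^{1/d}$ (which holds because $\Delta$ is a constant and $k$ is large) and on $\diam(R)\le d$.

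The only place the hypothesis on $|X|$ enters is in verifying condition~\eqref{claim:1}, i.e.\ that at each stage there remains a good pair $(X_i,Y_j)$. The $Y$-side calculation from the proof of Proposition~\ref{prop:bipartite} goes through unchanged, since the lighter class $A$ has at most $\lceil k/2\rceil$ vertices and the minimum-degree condition on $X$ is unchanged. For the $X$-side the original calculation $(1-30\sqrt\varepsilon)(1+100\sqrt\varepsilon)k>k+1$ must be replaced, under the weaker hypothesis $|X|\ge(1+100\sqrt\varepsilon)\tfrac{\Delta-1}{\Delta}k\cdot\tfrac{|R|}{n}$, by
\[
(1-30\sqrt\varepsilon)(1+100\sqrt\varepsilon)\tfrac{\Delta-1}{\Delta}k>(1+2\sqrt\varepsilon)\tfrac{\Delta-1}{\Delta}k>\tfrac{\Delta-1}{\Delta}k+1,
\]
which still delivers a contradiction with the number of $X$-vertices actually needed (namely $|B|\le\tfrac{\Delta-1}{\Delta}k+1$), provided $k_0$ is taken large enough that $2\sqrt\varepsilon\cdot\tfrac{\Delta-1}{\Delta}k_0>1$, i.e.\ $k_0\gtrsim\Delta/\sqrt\varepsilon$. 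Choosing $k_0$ as the maximum of this threshold and the one required by Proposition~\ref{prop:bipartite} completes the argument.

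I do not foresee any real obstacle; the only new ingredient is the elementary balance bound $|B|\le\tfrac{\Delta-1}{\Delta}k+1$, and the embedding scheme of Proposition~\ref{prop:bipartite} does not care whether the target set in $X$ has size $(1+100\sqrt\varepsilon)k$ or merely $(1+100\sqrt\varepsilon)\tfrac{\Delta-1}{\Delta}k$, so long as it strictly exceeds the number of $T$-vertices actually mapped into it with a margin of order $\sqrt\varepsilon k$.
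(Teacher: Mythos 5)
Your proof is correct and follows exactly the route the paper has in mind (the paper does not write out a separate proof of this corollary, but signals the balancedness-bound idea in the surrounding text and in Example~\ref{exampleDelta}): derive $|B|\le\tfrac{\Delta-1}{\Delta}k+1$ for the heavier class from $\Delta(T)\le\Delta$, and then rerun the proof of Proposition~\ref{prop:bipartite} verbatim, noting that the only place where $|\bigcup X|$ is compared against anything is the ``good pair'' count on the $X$-side, which still produces the desired contradiction against $|B|$ rather than against $|T|$. The only nitpick is that your stated threshold $k_0\gtrsim\Delta/\sqrt\varepsilon$ is a harmless overestimate: since $\tfrac{\Delta-1}{\Delta}\ge\tfrac12$ for $\Delta\ge2$, the condition $2\sqrt\varepsilon\cdot\tfrac{\Delta-1}{\Delta}k_0>1$ already holds once $k_0>1/\sqrt\varepsilon$, independently of $\Delta$.
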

	
\subsection{The nonbipartite case}	

In this section we treat tree embeddings into graph with large nonbipartite components in the reduced graph. 
The proof of the corresponding proposition, Proposition~\ref{prop:non bipartite} below, 
is  very similar to the proof of Proposition~\ref{prop:bipartite}. For convenience, we will now work with a matching in the reduced graph.

 Let $G$ be a graph with an $(\varepsilon,\eta)$-regular partition, we say that $M$ is a cluster matching if it is a matching in the corresponding $(\varepsilon,\eta)$-reduced graph. We begin our treatment of the nonbipartite case by showing that we can always find a large cluster matching in graphs with large minimum degree that admit an $(\varepsilon,\eta)$-regular partition, for some $\eps, \eta\in (0,1)$.

\begin{lemma}\label{lem:matching2}
	Let $\varepsilon,\eta\in(0,1)$ and  let $t,\ell\in\mathbb{N}$. Let $G$ be a graph on $n\ge 2t$ vertices and minimum degree at least $t$, such that $G$ admits an $(\varepsilon,\eta)$-regular partition with less than $\ell$ parts. Then there exists an $(5\varepsilon,\eta-\varepsilon)$-regular partition of $G$ with less than $2\ell$ parts, a cluster matching $M$ and an independent family of clusters~$\mathcal{C}$, disjoint from~$M$, such that	
	\begin{enumerate}[(i)]
		\item $V(\bigcup M)\cup V(\bigcup\mathcal{C})=V(G)$;\label{cover}
		\item $|\bigcup M|\geq 2t$; and\label{size}
		\item there is a  partition $V(M)=V_1\cup V_2$, such that $N(\mathcal{C})\subseteq V_1$  and  every edge in $M$ has one endpoint in $V_1$ and one endpoint in $V_2$.\label{partition}
	\end{enumerate}
\end{lemma}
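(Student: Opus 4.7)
First I would apply Lemma~\ref{lem:matching} to the $(\varepsilon,\eta)$-reduced graph $R$ of $G$: this yields an independent set $I\subseteq V(R)$, a matching $M'$, a family $\Gamma$ of vertex-disjoint triangles with $V(R)=I\cup V(M')\cup V(\Gamma)$, and a bipartition $V(M')=V_1'\cup V_2'$ such that every edge of $M'$ crosses it and $N_R(I)\subseteq V_1'$. The parts $M'$ and $I$ already have the right shape, and the only obstacle is that every cluster in a triangle of $\Gamma$ must still lie in $V(\bigcup M)$, which cannot be achieved by a matching of $R$ itself.

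The plan is to refine the partition by splitting each cluster $V_i$ of $G$ into two halves $V_i^1,V_i^2$ of (essentially) equal size. By Fact~\ref{fact:1}(\ref{fact:1,2}) applied with $\delta=1/2$, any pair of halves coming from an edge of $R$ is $4\varepsilon$-regular with density in $(\eta-\varepsilon,\eta+\varepsilon)$, while all other pairs retain density $0$; so the refinement is a $(5\varepsilon,\eta-\varepsilon)$-regular partition of $G$ with fewer than $2\ell$ parts, whose reduced graph $R^\ast$ is essentially the $2$-blow-up of $R$. Inside $R^\ast$ I would construct $M$ as follows: for each edge $V_iV_j\in M'$ include the two parallel edges $V_i^1V_j^1$ and $V_i^2V_j^2$; for each triangle $V_iV_jV_k\in\Gamma$ the six corresponding half-clusters induce a $K_{2,2,2}$ in $R^\ast$, from which I take the perfect matching $V_i^1V_j^1,\ V_j^2V_k^1,\ V_i^2V_k^2$. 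I set $\mathcal{C}$ to be the family of half-clusters coming from $I$; this is an independent family in $R^\ast$ (since the parent pairs had density $0$), disjoint from $V(M)$, and $V(\bigcup M)\cup V(\bigcup\mathcal{C})=V(G)$. To achieve property~\ref{partition}, I place both halves of each $V\in V_1'$ into $V_1$ and both halves of each $V\in V_2'$ into $V_2$, and for each triangle $V_iV_jV_k\in\Gamma$ I put $\{V_i^1,V_i^2,V_j^2\}$ into $V_1$ and $\{V_j^1,V_k^1,V_k^2\}$ into $V_2$. Every matching edge then crosses, and $N_{R^\ast}(\mathcal{C})\subseteq V_1$ follows from $N_R(I)\subseteq V_1'$.

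For~\ref{size}, if $I=\emptyset$ then $V(\bigcup M)=V(G)$ and $|\bigcup M|=n\ge 2t$. Otherwise, pick any vertex $v$ in a cluster $V\in I$. Because the partition is $(\varepsilon,\eta)$-regular, every non-edge of $R$ corresponds to a pair of density zero in $G$, so every neighbour of $v$ lies in a cluster $R$-adjacent to $V$, hence in $\bigcup V_1'$. From $\deg_G(v)\ge t$ we deduce $|\bigcup V_1'|\ge t$, and since $M'$ bijectively matches $V_1'$ to $V_2'$ through clusters of equal size, $|\bigcup V(M')|=2|\bigcup V_1'|\ge 2t$, yielding $|\bigcup M|\ge 2t$.

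The main obstacle I foresee is bookkeeping rather than conceptual: splitting a cluster of odd size produces halves differing by one vertex, formally violating the equal-size requirement of a regular partition. This is handled by the standard trick of dumping the few leftover vertices into an exceptional cluster (or redistributing them across clusters), which changes the regularity and density parameters only negligibly, comfortably within the slack between $(\varepsilon,\eta)$ and $(5\varepsilon,\eta-\varepsilon)$.
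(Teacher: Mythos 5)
Your proposal is correct and follows essentially the same route as the paper's own proof: apply Lemma~\ref{lem:matching} to $R$, split each cluster into two halves to get a $(5\varepsilon,\eta-\varepsilon)$-regular refinement via Fact~\ref{fact:1}, lift the matching $M'$ to parallel pairs, replace each triangle by a perfect matching of its six half-clusters, and take $\mathcal{C}$ to be the halves of $I$; your explicit $V_1$/$V_2$ assignment and the minimum-degree argument for~(ii) just spell out what the paper leaves implicit. The only cosmetic difference is that the paper avoids the refinement entirely when $\Gamma=\emptyset$, whereas you always double, which is equally fine given the $<2\ell$ slack.
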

\begin{proof} Let $R$ be the reduced graph corresponding to the $(\varepsilon,\eta)$-regular partition of $G$. By applying Lemma~\ref{lem:matching} to $R$, we obtain an independent set~$I$, a matching $M'$ and a set of disjoint triangles $\Gamma$, such that $V(R)=I\cup V(M')\cup V(\Gamma)$. If $\Gamma$ is empty, we are done by choosing $\mathcal{C}:=I$ and $M:=M'$. So suppose $\Gamma\neq\emptyset$. 
	
	We arbitrarily partition each cluster $X\in V(R)$ into $X^1$ and $X^2$ so that $\big||X^1|-|X^2|\big |\le 1$. Thanks to Fact \ref{fact:1,2}, the partition $V(G) = \bigcup_{X\in V(R)}X^1\cup X^2$ is $(5\varepsilon,\eta-\varepsilon)$-regular and has less than $2\ell$ atoms. We set
	
	$$M:=\bigcup_{CD\in M'}\{(C^1D^1),(C^2D^2)\}\cup\bigcup_{XYZ\in\Gamma} \{(X^1,Y^2),(Y^1,Z^2),(Z^1,X^2)\}$$
	
	\noindent and
	
	$$\mathcal{C}:=\bigcup_{C\in I}\{C^1,C^2\}$$
	
	Note that $\mathcal{C}$ and $M$ inherit the properties of $I$ and $M'$, respectively. Property~\eqref{size} follows from Property~\eqref{partition} and the minimum degree of $G$.
\end{proof}		

Let us note that in Lemma~\ref{lem:matching2}, given $\delta>0$, if $G$ has minimum degree at least $(1+\delta)\tfrac{k}{2}$, then one can find a matching covering at least $(1+\delta)k$ vertices of $G$. 

\smallskip

Now we are ready for Proposition~\ref{prop:non bipartite} and its proof.

\begin{proposition}\label{prop:non bipartite}
	For all $\varepsilon\in (0,10^{-8})$ and $d,M_0\in\NN$ there exists $k_0$ such that for all $n,k\geq k_0$ 
	the following holds. 
Let $G$ be an $n$-vertex  graph that has an $(\varepsilon,5\sqrt{\varepsilon})$-reduced graph $R$ that satisfies $|R|\le M_0$, such  that
\begin{enumerate}[(i)]
\item $R$ is connected and nonbipartite;
\item  $\diam(R)\leq d$; and
	\item $R$ has a matching $M$ with $|V(M)|\ge (1+100\sqrt{\varepsilon})k\cdot \frac{|R|}n$.
	\end{enumerate}
	 Then $G$ contains every tree $T$ with $k$ edges and $\Delta({T})\le k^{\frac{1}{3d+1}}$  as a subgraph. 
\end{proposition}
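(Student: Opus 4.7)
The plan is to adapt the proof of Proposition~\ref{prop:bipartite}, replacing the bipartite pair $(X,Y)$ with edges of the matching $M$ and using the nonbipartiteness of $R$ to resolve parity obstructions. As in the bipartite case, I would refine the regular partition so that each cluster $V_i$ is split into three slices $V_{i,S}$, $V_{i,L}$, $V_{i,C}$ of sizes $\lceil 10\sqrt{\eps}\, m\rceil$, $\lceil 10\sqrt{\eps}\, m\rceil$, and the remainder, playing the roles of seeds, links, and core respectively. I would then apply Proposition~\ref{prop:cut3} with $\beta=\eps/|R|$ to decompose $T$ into a constant-size seed set $S$ and a family of pieces $\mathcal{P}$, each of order at most $\beta k$, and process $S\cup\mathcal{P}$ in an order such that the parent of each element has already been embedded.

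The key structural input specific to the nonbipartite setting is the following: since $R$ is connected, nonbipartite, and has diameter at most $d$, the odd girth of $R$ is at most $2d+1$, and consequently between any two clusters and for any prescribed parity there exist walks in $R$ of that parity of length at most $3d+1$. This is what underlies the exponent $3d+1$ in the hypothesis on $\Delta(T)$, and plays the role that condition~(ii) of Proposition~\ref{prop:bipartite} played in the bipartite proof.

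The embedding then proceeds as follows. Seeds are placed into the $S$-slice of any cluster adjacent to the image of their parent via a typical vertex, which is possible since $|S|$ is constant. Given a piece $P\in\mathcal{P}$ whose parent is already embedded into a cluster $Z_0$, I would (1) locate a \emph{good} matching edge $X_iY_j\in M$, namely one for which both $X_{i,C}$ and $Y_{j,C}$ still contain at least $5\sqrt{\eps}\,m$ unoccupied vertices; such a good edge must exist by a counting argument using hypothesis~(iii), since otherwise $V(M)$ would already contain more than $k+1$ occupied vertices; (2) choose a walk $Z_0Z_1\dots Z_h$ in $R$ of length $h\leq 3d+1$ ending at $X_i$ (or at $Y_j$), with parity chosen so that after embedding the first $h-1$ levels of $P$ into the $L$-slices of $Z_1,\dots,Z_{h-1}$, the two colour classes of the remainder of $P$ land on the correct sides of the pair $(X_{i,C}',Y_{j,C}')$; and (3) repeatedly apply Lemma~\ref{lem:T1} along the link path and then once more to embed the rest of $P$ into $(X_{i,C}',Y_{j,C}')$. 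The assumption $\Delta(T)\le k^{1/(3d+1)}$ ensures that the first $3d$ levels of any piece contain at most $2\Delta(T)^{3d}\le 2k^{3d/(3d+1)}$ vertices, so that the total usage per $L$-slice is at most $\eps m$ across the whole embedding, analogously to the bound obtained in the proof of Proposition~\ref{prop:bipartite}.

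The main obstacle is the parity bookkeeping: one must verify that the walk of length at most $3d+1$ can always be chosen with the appropriate parity so that the colour classes of $P$ align with the pair $(X_i,Y_j)$, and that the typicality conditions propagate along the walk from slice to slice. The nonbipartiteness of $R$ together with the odd-girth bound guarantee that walks of either parity are available, and the remaining work—tracking free space, preserving typicality across slices, and counting occupied vertices in $L$- and $S$-slices—is a direct adaptation of the bookkeeping already carried out for Proposition~\ref{prop:bipartite}. A minor extra point is that, as in Remarks~\ref{rem:bipartite} and~\ref{rem:bipartite2}, the same argument would go through if the matching hypothesis were relaxed or if certain vertices had to be avoided, which will be useful later when this proposition is invoked inside the proof of Lemma~\ref{lem:superlemma}.
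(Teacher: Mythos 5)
Your proposal follows essentially the same strategy as the paper's proof: slicing clusters into seed/link/core parts, decomposing $T$ with Proposition~\ref{prop:cut3}, embedding piece by piece along walks in the reduced graph, and using the odd girth bound of at most~$2d+1$ to obtain walks of both parities of length at most~$3d+1$. The one genuine gap is in your justification of step~(1). The claim that ``otherwise $V(M)$ would already contain more than $k+1$ occupied vertices'' does not follow from hypothesis~(iii) alone: if the embedding were allowed to fill $X_{i,C}$ completely while leaving $Y_{j,C}$ almost empty for every edge $X_iY_j\in M$, then no pair would be good even though only about half of $\bigcup V(M)$, i.e.\ roughly $(1+100\sqrt\eps)k/2<k+1$ vertices, are occupied. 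What makes the counting work in the paper's proof is an explicitly maintained balancing invariant, namely that $\bigl|\,|\phi^{-1}(X_{i,C})|-|\phi^{-1}(Y_{j,C})|\,\bigr|\le\eps m$ for every $X_iY_j\in M$ at every stage (condition~(E) there). Given this invariant, a pair that is not good has \emph{both} of its $C$-slices within $O(\sqrt\eps)m$ of full, and only then does the total occupancy over $V(M)$ exceed $k+1$. Your remark in step~(2) about sending the colour classes to ``the correct sides of the pair'' gestures at this, but it needs to be promoted to a global invariant (always send the heavier colour class of the remainder of $P$ to the currently less-occupied $C$-slice of the target pair), and one should check it is preserved because each piece has at most $\beta k\le\eps m$ vertices. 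Once the invariant is in place, the rest of your bookkeeping --- typicality propagation along the walk, the $L$-slice occupancy bound of order $|S|\cdot\Delta(T)^{3d}\le\eps m$, and the odd-cycle parity argument --- matches the paper's argument.
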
	

\begin{proof}
Given $\eps, d$ and $M_0$ as in the Theorem, we set
$$k_0:= \left(\frac{8M_0^2}{\eps^2}\right)^{3d+1}$$

Now, let $G$ be a graph as in Proposition~\ref{prop:non bipartite}, let $V(G)=V_1\cup\ldots\cup V_{\ell}$ be the $(\varepsilon,5\sqrt\varepsilon)$-upper regular partition of $G$ corresponding to the reduced graph $R$(in particular $\ell\le M_0$). Set $m:=|V_i|$ for any $i\in[\ell]$. 

For each $i\in[\ell]$, we partition cluster  $V_i$ into sets $V_{i,S}$, $V_{i,L}$, $V_{i,C}$ in the same way as we did in Proposition~\ref{prop:bipartite}. Also, consider the decomposition of $T$ into~$\mathcal{T}$ and $S$ given by Proposition~\ref{prop:cut3}, with $\beta=\frac{\eps}{\ell}$. We order $S\cup\mathcal P$ in the same way as in the proof of Proposition~\ref{prop:bipartite}.
	
The embedding $\phi:V(T)\to V(G)$ will be constructed iteratively, following the order of $S\cup\mathcal P$. 
			We make sure that at every step, the following conditions will be satisfied: 
	
	\begin{enumerate}[(A)]
		\item\label{nbip:0} Each vertex is embedded into a neighbour of the image of its already embedded parent;
		\item\label{nbip:1} each $s\in S$ is embedded into the $S$-slice of some cluster;
		\item\label{nbip:2} for each $P\in  \mathcal P$, the first $3d$ levels are embedded into the $L$-slices of some clusters, and the remaining levels go to the $C$-slices;
		\item\label{nbip:3} every  $v\in V(T)$ is mapped to a vertex that is typical with respect to both the $S$-slice and the $L$-slice of some adjacent cluster; and
		\item\label{nbip:4} $\big||\phi^{-1}(V_{i,C})|-|\phi^{-1}(V_{j,C})|\big|\le\epsilon m$ for each pair $(V_i,V_j)\in M$. 
	\end{enumerate}
	
	We already know that it is no problem to embed a seed $s$, when its time comes, satisfying conditions~\eqref{nbip:0}, ~\eqref{nbip:1} and~\eqref{nbip:3}. So we mainly have to worry about~\eqref{nbip:2} and~\eqref{nbip:4}.
	
	Assume we are about to embed a piece $P\in \mathcal P$. The parent of the root $r(P)$ of $P$ is already embedded into some vertex that is typical with respect to the $L$-slice of some cluster~$Z_1$. In order to be able to embed $P$ so that the above conditions are satisfied, it suffices to ensure that
	
	\begin{enumerate}[(I)]
		\item\label{nclaim:1} there exists some good pair $(V_i,V_j)$;
		\item \label{ncondi_path} for either choice of $Z_{3d+1}\in\{V_i, V_j\}$ there is a walk $Z_1Z_2\ldots Z_{3d+1}$ in~$R$;
		\item\label{ncondi_Lfits} the first $3d$ levels of $P$ are small enough to fit into the free space in the $L$-slices of $\{Z_1, Z_2\ldots , Z_{3d}\}$,
	\end{enumerate}
	where a walk in a graph is a sequence $Z_1Z_2\ldots Z_{h}$ such that each $Z_i$ is adjacent to $Z_{i+1}$ for all $1\le i<h$.
	
	Before we prove~\eqref{nclaim:1}--\eqref{ncondi_Lfits}, let us explain why these conditions are enough to ensure we can embed $T$ correctly. As before, we plan to repeatedly apply Lemma~\ref{lem:T1} in order to embed the first levels of $P$ into the $L$-slices of the clusters $Z_1, Z_2, \ldots , Z_{3d}$, and the later levels into the $C$-slices of $V_i,V_j$, always avoiding  all vertices used earlier. 
	
	Since our aim is to embed $P$ in a way that~\eqref{nbip:4} is fulfilled, we take care to choose $Z_{3d+1}\in\{V_i, V_j\}$  in a way that the larger bipartition class of the tree~$P'$ obtained from $P$ by deleting its first $3d$ levels goes to the less occupied  slice $V_{i,C}$, $V_{j,C}$. That is, assuming that $\big |\phi^{-1}(V_{i,C})\big|\le \big|\phi^{-1}(V_{j,C})\big|$ (the other case is analogous), we proceed as follows. If the levels of $P'$ that lie at even distance from the root of $P$ in total contain more vertices than those lying at odd distance, we choose $Z_{3d+1}=V_j$. Otherwise, we choose $Z_{3d+1}=V_i$.
	We then embed $P$, making the first $3d$ levels go to $L$-slices, and embedding~$P'$ into  $V_{i,C}\cup V_{j,C}$. 
		
		\medskip
		
	Let us now prove~\eqref{nclaim:1}. Suppose there is no good pair in $M$. This together with~\eqref{nbip:4} implies that the number of embedded vertices is at least
	$$\sum_{AB\in M}(|A_{i,C}|-6\sqrt{\eps}m+|B_{i,C}|-6\sqrt{\eps}m)\geq (1-33\sqrt{\eps})(1+100\sqrt{\eps})k>k+1,$$ 
	a contradiction, since $|T|=k+1$. 	
	
	Next, we show~\eqref{ncondi_path}. Assume we chose $Z_{3d+1}=V_i$ (the other case is analogous). Let $C=C_1C_2\ldots C_pC_1$ be a minimal odd cycle in the reduced graph. Since $C$ is minimally odd, the shortest path between two clusters in $C$ is the shortest arc in the cycle, and hence $p\le 2d+1$. Let $U:=Z_1U_1\ldots U_sC_1$ be a shortest path  from $Z_1$ to $C_1$ and let $Q:=C_{\lceil\frac p2\rceil} Q_1\ldots Q_tV_i$ be a shortest path from $C_{\lceil\frac p2\rceil}$ to $V_i$. As $\diam(R)\le d$, we have that  $s+t+2\le 2d$. So, by using the appropriate one of the two $C_1$--$C_{\lceil\frac p2\rceil}$ paths in $C$, we can extend $U\cup Q$ to an odd walk of  length at most $2d+(d+1)=3d+1$, which connects $Z_1$ with~$V_i$. By going back- and forwards on this walk, if necessary, we can obtain a walk of length exactly $3d+1$, which is as desired. So, condition~\eqref{ncondi_path} holds.
	
	Finally, using the same reasoning as in Proposition~\ref{prop:bipartite} we can prove that the total number of occupied vertices in $L$-slices is at most 
	$$|S|\cdot \Delta(T)\cdot 2(\Delta(T)-1)^{3d-1} \ \leq \ \frac 4\beta \cdot k^{\frac{3d}{3d+1}} \ < \ \varepsilon m$$ for $k\ge k_0$. In particular, the $L$-slice of each cluster has at least $\lceil9\sqrt{\varepsilon} m\rceil$ unused vertices and, therefore, we can embed each vertex of the first $3d$ levels of $P$ into the $L$-slices of the clusters from the walk $Z_1Z_2\ldots Z_{3d}$ without a problem. This proves~\eqref{ncondi_Lfits}.

\end{proof}	

\begin{remark}
	If $d=1$ we can actually embed trees with maximum degree bounded by $\rho k$, where $\rho$ is a sufficiently small constant, without modifying our proof significantly, because we can reach both $V_i$ and $V_j$ in one step from the image of the latest embedded seed.
\end{remark}

\begin{remark}\label{rem:nbipartite2}
	Similar as in the bipartite case, we can add an extra hypothesis as in Remark~\ref{rem:bipartite2}. Consider an arbitrary set $U\subset V(G)$ such that $|U|+|T|\leq k+1$ and such that $U$ is reasonably balanced in $M$, that is, $\big||U\cap C|-|U\cap D|\big|<\varepsilon |C|$ for all $CD\in M$. Then $T$ can be embedded into $G$ avoiding $U$.  
\end{remark}

Repeatedly applying Proposition~\ref{prop:non bipartite}, together with Remark~\ref{rem:nbipartite2}, we can embed a forest instead of a tree.

\begin{corollary}\label{emb:forest2}
	Let $\varepsilon\in (0,10^{-8})$ and let $d,M_0\in\NN$. There exists $k_0\in \NN$ such that for all $n,k\geq k_0$ 
 the following holds. Let $G$ be a $n$-vertex graph with an $(\varepsilon,5\sqrt{\varepsilon})$-reduced graph $R$ that satisfies $|R|\le M_0$. Suppose that
	\begin{enumerate}[(i)]
	 \item $R$ is connected and nonbipartite;
	 \item $\diam(R)\le d$;
	 \item $R$ has a matching $M$ with $|V(M)|\ge (1+100\sqrt{\varepsilon})k\cdot \frac{|R|}n$;
	 \end{enumerate}
	 then any forest ${F}$ on at most $k+1$ vertices that satisfies $\Delta({{F}})\le k^{\frac{1}{3d+1}}$ is a subgraph of $G$. 
	Moreover, if ${F}$ has at most $\tfrac{\eps n}{|R|}$ roots, then the images of the roots can be mapped into any prescribed set of
	size at least $2\epsilon n$.
\end{corollary}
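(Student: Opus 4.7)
The plan is to embed the components of $F$ one by one, repeatedly applying Proposition~\ref{prop:non bipartite} together with Remark~\ref{rem:nbipartite2}, and treating the images of the already-embedded components as the avoided set.

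First I would write $F = T_1 \cup T_2 \cup \cdots \cup T_s$ as a disjoint union of component trees, and then iteratively for $j = 1, 2, \ldots, s$ embed $T_j$ via Proposition~\ref{prop:non bipartite} (with Remark~\ref{rem:nbipartite2}) while avoiding $U_j := \bigcup_{i<j}\phi(T_i)$. Remark~\ref{rem:nbipartite2} requires $|U_j| + |T_j| \le k+1$, which follows from $|F| \le k+1$, and that $U_j$ be balanced on the matching $M$ in the sense that $\big||U_j \cap C| - |U_j \cap D|\big| < \varepsilon|C|$ for every $CD \in M$. To maintain the latter as an invariant, I would rely on condition (E) in the proof of Proposition~\ref{prop:non bipartite}, which enforces $\big||\phi^{-1}(V_{i,C})| - |\phi^{-1}(V_{j,C})|\big| \le \varepsilon m$ by choosing the endpoint $Z_{3d+1} \in \{V_i, V_j\}$ so that the larger bipartition class of each piece (after stripping its top $3d$ levels) is sent to the currently less-occupied slice. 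This rule depends only on the current total occupation, so applying the proposition starting from a balanced $U_j$ produces a total image $U_{j+1}$ that is again balanced within $\varepsilon m$, and the invariant persists by induction.

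For the moreover statement, let $W \subseteq V(G)$ with $|W| \ge 2\varepsilon n$ be the set prescribed for the roots, and recall the number of roots is at most $\varepsilon n / |R| \le \varepsilon m$. When embedding each $T_j$ that carries a prescribed root, I would choose the image of its root inside $W \setminus U_j$: since at most $\varepsilon m$ vertices of $W$ are ever used as root images, an averaging argument over the $\ell \le M_0$ clusters produces a cluster $V_i$ still containing at least $\varepsilon m$ unused vertices of $W$, and by Fact~\ref{fact:1} all but $\varepsilon m$ of them are typical to the $S$- and $L$-slices of some adjacent cluster, which suffices to launch the embedding as in the root step of the proof of Proposition~\ref{prop:non bipartite}.

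The main obstacle will be verifying that the balance invariant on $M$ genuinely persists across iterations, rather than being merely an artifact of a single application of Proposition~\ref{prop:non bipartite}. As sketched above, this reduces to the observation that the orientation rule in that proof uses only the current total occupation of each slice, so the per-tree guarantee automatically upgrades to a global one as long as each iteration starts from a balanced $U_j$; a careful bookkeeping argument, analogous to the one underlying Corollary~\ref{emb:forest} in the bipartite case, will make this precise.
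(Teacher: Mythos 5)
Your proposal follows exactly the strategy the paper has in mind: the paper states Corollary~\ref{emb:forest2} as an immediate consequence of repeatedly applying Proposition~\ref{prop:non bipartite} together with Remark~\ref{rem:nbipartite2}, with no written proof, and your iterative scheme, the identification of condition~(E) as the mechanism that propagates the balance hypothesis of Remark~\ref{rem:nbipartite2} across iterations, and the averaging argument for the prescribed root set are the natural details one would fill in. One small point worth making explicit if you were to write this up fully: condition~(E) only controls the $C$-slices, so to deduce the full-cluster balance that Remark~\ref{rem:nbipartite2} requires for the next iteration you also need to note that the total occupation of $S$- and $L$-slices over all iterations stays well below $\varepsilon m$ (which it does, since in the context where the corollary is applied the number of roots is of order $k^{1/c}\ll \varepsilon m$), and you should absorb the forbidden set $U_j$ into the "less occupied slice" bookkeeping from the start of each application, as you already indicate.
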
	


\section{Improving the maximum degree bound}\label{improve}

In the previous section we proved that  in graphs of minimum degree at least $(1+\delta)\tfrac{k}{2}$ which have a large connected component  after applying regularity and performing the usual cleaning-up, all trees of maximum degree  $k^{O(\delta)}$ appear as subgraphs (see Theorem~\ref{thm:Erdos}). The aim of the present section is to prove a similar statement as there, but with a significant
weakening in the bound on the maximum degree of the tree. More precisely, the exponent in this bound will no longer depend on the diameter of the reduced graph. 

The exact statement is given in Proposition~\ref{prop:connected-cte}, and its proof can be found  in Subsection~\ref{subsec:improve}. Then, in Subsection~\ref{sec:ES}, we give a quick application of  Proposition~\ref{prop:connected-cte} to the Erd\H os-S\'os conjecture.

\subsection{Improving the exponent from the maximum degree bound}\label{subsec:improve}

We need a theorem from~\cite{ERDOS1989}, which says that one can bound the diameter of any connected graph in terms of its number of vertices and its minimum degree. 
\begin{theorem}[Erd\H{o}s, Pach, Pollach and Tuza~\cite{ERDOS1989}]\label{thm:Erdos} Let $G$ be a connected graph on $n$ vertices with minimum degree at least $2$. Then $$\diam(G)\leq \left\lfloor\tfrac{3n}{\delta(G)+1}\right\rfloor - 1.$$
\end{theorem}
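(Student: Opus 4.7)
The plan is to use a breadth-first search (BFS) argument from a pair of diametrically opposite vertices. First, I would fix vertices $u,v\in V(G)$ with $\mathrm{dist}(u,v)=\mathrm{diam}(G)=:d$ and let $L_i:=\{x\in V(G):\mathrm{dist}(u,x)=i\}$ be the BFS-layers from $u$, so that $L_0=\{u\}$, every $L_i$ with $0\le i\le d$ is nonempty (it contains the $i$-th vertex $v_i$ of any shortest $u$--$v$ path), and $\sum_{i=0}^{d}|L_i|\le n$.

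The key local observation is that for every vertex $x\in L_i$, all its neighbours lie in $L_{i-1}\cup L_i\cup L_{i+1}$, so the closed neighbourhood satisfies $|N[x]|\ge\delta(G)+1$ and $N[x]\subseteq L_{i-1}\cup L_i\cup L_{i+1}$. Applying this to the specific vertex $v_{3j+1}\in L_{3j+1}$ on the shortest $u$--$v$ path, the ``bands'' $B_j:=L_{3j}\cup L_{3j+1}\cup L_{3j+2}$ are pairwise disjoint and each satisfies $|B_j|\ge \delta(G)+1$ provided $3j+1\le d$.

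Next I would count. Summing $|B_j|\ge \delta(G)+1$ over all indices $j$ with $3j+1\le d$, and using that the bands are disjoint subsets of $V(G)$, yields
\[
\left(\left\lfloor\tfrac{d-1}{3}\right\rfloor+1\right)(\delta(G)+1)\;\le\;n,
\]
from which the desired inequality $d\le\lfloor 3n/(\delta(G)+1)\rfloor-1$ is obtained by isolating $d$. To squeeze out the additive constant $-1$ cleanly in the boundary cases $d\equiv 0\pmod 3$ where one of the ``leftover'' layers is not yet accounted for, I would either (a) also incorporate the contribution of the leftover tail layers $L_{3J+3},\dots,L_d$, each nonempty, on top of the banded sum, or equivalently (b) average the estimate over the three possible shifts $k\in\{0,1,2\}$ of the band decomposition (using middle vertices $v_{3j+k+1}$), so that each layer is counted in exactly one band per shift. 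The averaged inequality gives $d\cdot(\delta(G)+1)\le 3n$ up to a small correction from the two endpoints, and solving for $d$ yields the claimed bound.

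The main obstacle is the bookkeeping in the endpoint/boundary cases: the simple band argument gives $d\le 3n/(\delta(G)+1)$ immediately, but obtaining the sharp $-1$ requires carefully exploiting the fact that the very first band contains the cutoff $u$ of degree only $|L_1|$ within the band and that the last $\le 2$ layers beyond the last complete band are nonempty. The hypothesis $\delta(G)\ge 2$ is used precisely here to ensure that $L_{d-1}\cup L_d$ already contains at least $\delta(G)+1\ge 3$ vertices, which together with the band sum closes the gap.
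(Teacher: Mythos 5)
The paper does not contain its own proof of this statement: it is quoted directly from Erd\H{o}s, Pach, Pollack and Tuza~\cite{ERDOS1989} and used as a black box. The paper \emph{does}, however, prove the closely related Lemma~\ref{lem:dist} with exactly the kind of BFS--banding argument you sketch, so it still makes sense to compare.

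Your local observation (closed neighbourhoods lie in three consecutive BFS layers) is correct and is the right starting point, but your ``approach~(a)'' --- bands $B_j=L_{3j}\cup L_{3j+1}\cup L_{3j+2}$ plus a count of leftover tail layers --- does not, as stated, recover the sharp additive $-1$. Take $d=3m$: those bands give $m(\delta(G)+1)+1\le n$, from which $3m\le \lfloor 3n/(\delta(G)+1)\rfloor -1$ does \emph{not} follow when $\delta(G)>2$ (for instance with $n=5$, $\delta(G)=3$, $m=1$, the inequality $5\le 5$ permits $d=3$ although the theorem asserts $d\le 2$). Adding $|L_{d-1}\cup L_{d}|\ge\delta(G)+1$ on top, as your closing sentence suggests, double-counts $L_{d-1}$, which already sits inside the last band $B_{m-1}$. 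The clean fix --- which you gesture at but do not spell out --- is to treat $L_0\cup L_1\supseteq N[u]$ as a band by itself (it spans only two layers but already has $\ge\delta(G)+1$ elements, since $N(u)\subseteq L_1$), and then take the shifted bands $L_{3j-1}\cup L_{3j}\cup L_{3j+1}\supseteq N[v_{3j}]$ for $j=1,\dots,\lfloor d/3\rfloor$. These $\lfloor d/3\rfloor+1$ sets are pairwise disjoint and each has at least $\delta(G)+1$ vertices, so $(\lfloor d/3\rfloor+1)(\delta(G)+1)\le n$, hence $\lfloor d/3\rfloor\le\lfloor n/(\delta(G)+1)\rfloor-1$, and then $d\le 3\lfloor d/3\rfloor+2\le 3\lfloor n/(\delta(G)+1)\rfloor-1\le\lfloor 3n/(\delta(G)+1)\rfloor-1$. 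This shifted decomposition, with $L_0\cup L_1$ as the anchor band, is precisely the one appearing in the paper's proof of Lemma~\ref{lem:dist}; I would adopt it rather than the averaging scheme of your option~(b), which is harder to make watertight.
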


We also need the following lemma.  Given a graph $G$ and a vertex $v\in V(G)$, let $N_i(v)$ denote the $i^{\text{th}}$ neighbourhood of $v$ (i.e.~the set of vertices of $G$ at distance $i$ from $v$). 

\begin{lemma}\label{lem:dist} 
	Let $q\in\NN$ and let $G$ be a connected graph, and let $v\in V(G)$. Then
	$$\left|\bigcup_{i=0}^{3q+1}N_i(v)\right| \geq \min\{(q+1)(\delta (G)+1),|V(G)|\}.$$
\end{lemma}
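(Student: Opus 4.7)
The plan is to dichotomize on whether the ball $B := \bigcup_{i=0}^{3q+1} N_i(v)$ already equals $V(G)$. If so, the desired inequality is immediate, so from here on I assume $B \neq V(G)$.

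In this case, since $G$ is connected, some vertex of $G$ is at distance exactly $3q+2$ from $v$. I would fix such a vertex $w$ and a shortest path $P = v_0 v_1 \cdots v_{3q+2}$ with $v_0 = v$ and $v_{3q+2} = w$; then $v_i \in N_i(v)$ for every $i$, so the $q+1$ vertices $v_0, v_3, v_6, \ldots, v_{3q}$ all lie inside $B$.

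The core step is to show that the closed neighborhoods $B_1(v_{3i}) := \{v_{3i}\} \cup N(v_{3i})$, for $i = 0, 1, \ldots, q$, are pairwise disjoint subsets of $B$. Containment in $B$ is direct: any neighbor of $v_{3i}$ must lie in $N_{3i-1}(v) \cup N_{3i}(v) \cup N_{3i+1}(v) \subseteq B$, since $3i+1 \leq 3q+1$. For disjointness, observe that $v_{3j} \in N_{3j}(v)$ yields $d_G(v, v_{3j}) = 3j$, so the triangle inequality forces $d_G(v_{3i}, v_{3j}) \geq 3(j-i) \geq 3$ whenever $i<j$; hence $B_1(v_{3i})$ and $B_1(v_{3j})$ cannot share a vertex, as sharing a vertex would imply $d_G(v_{3i}, v_{3j}) \leq 2$. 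Since each $B_1(v_{3i})$ has at least $\delta(G)+1$ vertices, summing over the $q+1$ values of $i$ gives $|B| \geq (q+1)(\delta(G)+1)$, which is exactly what is needed.

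The argument is essentially routine; the only place requiring any care is the disjointness of the closed neighborhoods, and this reduces to a one-line triangle inequality using the BFS level structure around $v$. I do not anticipate needing Theorem~\ref{thm:Erdos} itself, but the argument is in the same spirit — a packing of disjoint unit balls along a long geodesic.
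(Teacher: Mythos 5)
Your proof is correct and uses the same underlying idea as the paper: pick representatives at distances $0,3,6,\dots,3q$ from $v$, observe that each closed neighbourhood lies in the ball $B$, and sum $q+1$ disjoint contributions of size at least $\delta(G)+1$. The only cosmetic difference is that the paper takes arbitrary $v_{3j}\in N_{3j}(v)$ and gets disjointness for free from the fact that the level-triples $N_{3j-1}(v)\cup N_{3j}(v)\cup N_{3j+1}(v)$ partition $\{N_0(v),\dots,N_{3q+1}(v)\}$, whereas you pin the representatives to a geodesic and argue disjointness of the closed unit balls via the triangle inequality; both are fine.
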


\begin{proof}
	 If $N_i(v)=\emptyset$ for some $i\in[3q+1]$, then, as $G$ is connected, $V(G)\subseteq \bigcup_{j=0}^{i-1}N_j(v)$ and thus $|\bigcup_{j=0}^{3q+1}N_j(v)| = |V(G)|$. Therefore, we assume that $N_i(v)\neq\emptyset$ for every $i\in[3q+1]$. 
	 
	 Now,  for each $j\in[q]$, pick a vertex $v_{3j}\in N_{3j}(v)$. Observe that $N(v_{3j})\subseteq N_{3j-1}(v)\cup N_{3j}(v) \cup N_{3j+1}(v)$, and hence, $$|N_{3j-1}(v)\cup N_{3j}(v) \cup N_{3j+1}(v)|\geq \delta(G)+1.$$ We also know that $|N_0(v)\cup N_1(v)|=|N(v)|+1\geq \delta(G)+1$. This proves the statement.
\end{proof}

The next result shows that we can make the exponent in Proposition~\ref{prop:bipartite} and Proposition~\ref{prop:non bipartite} only depend on the minimum degree of $G$. In order to prove the result we will  first apply a strategy similar to the one used in Propositions~\ref{prop:bipartite} and~\ref{prop:non bipartite}. If this strategy fails, we will have found a good structure in the host graph and then, forgetting about the earlier attempt at an embedding of $T$, we  make use of the structure  to embed the tree in a different way. 

\begin{proposition}\label{prop:connected-cte}
	For all $\alpha\in[\tfrac{1}{2},1)$,  $\varepsilon\in (0,10^{-8})$ and $M_0\in\mathbb{N}$, there exists $k_0\in\NN$ such that for all $n, k\ge k_0$ the following holds. \\ Let $G$ be a $n$-vertex graph with $\delta(G)\geq (1+100\sqrt{\varepsilon})\alpha k$ that has a connected $(\varepsilon,5\sqrt{\varepsilon})$-reduced graph $R$ with $|R|\le M_0$. If
	\begin{enumerate}[(a)]
		\item $R=(A,B)$ is bipartite and such that $|A|\ge (1+100\sqrt{\eps})k\cdot\tfrac{|R|}{n}$; or
		\item $R$ is non-bipartite and $n\ge (1+100\sqrt{\eps})k$;
		\end{enumerate}
	then $G$ contains every $k$-edge tree of maximum degree at most~$k^{\frac{1}{c}}$, where $c=18\lceil\tfrac{2}{\alpha}\rceil-5$. 
\end{proposition}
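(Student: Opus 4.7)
Set $D := 3\lceil 2/\alpha\rceil$, so that $c = 6D-5 = 18\lceil 2/\alpha\rceil-5$. The proof runs in two phases. First we run the embedding algorithm from the proofs of Propositions~\ref{prop:bipartite} (in case~(a)) and~\ref{prop:non bipartite} (in case~(b)), with the single modification that every connecting path used in the reduced graph $R$ is capped at length~$D$. If the capped algorithm terminates successfully, we are done: each piece of the tree decomposition has its first $D-1$ levels routed through $L$-slices along a path of length at most~$D$, and $\Delta(T)\le k^{1/c}\le k^{1/D}$ makes the usual $L$-slice counting of Propositions~\ref{prop:bipartite} and~\ref{prop:non bipartite} go through.

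Otherwise, the algorithm aborts while trying to extend from some cluster~$C$: no good pair can be reached from $C$ within $D$ steps in $R$. We discard the partial embedding and re-embed $T$ from scratch inside the induced subgraph $G[\bigcup B_{D-1}(C)]$, where $B_{D-1}(C)$ is the ball in $R$ of radius $D-1$ about $C$. To see that $B_{D-1}(C)$ has enough capacity, we apply Lemma~\ref{lem:dist} with $q:=\lceil 2/\alpha\rceil-1$ (so $3q+1=D-2\le D-1$), together with Fact~\ref{fact:2}, which yields $\delta(R)\ge((1+100\sqrt{\varepsilon})\alpha k/n-10\sqrt{\varepsilon})|R|$:
\[
|B_{D-1}(C)|\;\ge\;(q+1)(\delta(R)+1)\;\ge\;\lceil 2/\alpha\rceil\cdot\alpha(1-O(\sqrt{\varepsilon}))k\cdot\frac{|R|}{n}\;\ge\;(2-O(\sqrt{\varepsilon}))k\cdot\frac{|R|}{n},
\]
using $\lceil 2/\alpha\rceil\alpha\ge 2$. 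Hence $|\bigcup B_{D-1}(C)|\ge(2-O(\sqrt{\varepsilon}))k$, which is ample space for $T$.

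We then apply Proposition~\ref{prop:bipartite} (in case~(a)) or Proposition~\ref{prop:non bipartite} (in case~(b)) to $G[\bigcup B_{D-1}(C)]$ with its reduced graph $R[B_{D-1}(C)]$. This reduced graph is connected and has diameter at most $2(D-1)$, so Proposition~\ref{prop:non bipartite} permits $\Delta(T)\le k^{1/(3\cdot 2(D-1)+1)}=k^{1/c}$ (and Proposition~\ref{prop:bipartite} imposes the strictly weaker bound $k^{1/(2(D-1))}$). In case~(a), the size bound on $B_{D-1}(C)$ together with a parity count of BFS levels from~$C$ gives at least $(1+100\sqrt{\varepsilon})k\cdot|R|/n$ clusters on the required side of the bipartition; in case~(b), Lemma~\ref{lem:matching2} applied to $R[B_{D-1}(C)]$ extracts a cluster matching of the required size.

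The main technical obstacle is transferring the minimum-degree hypothesis to the subgraph $G[\bigcup B_{D-1}(C)]$: clusters at the boundary $N_{D-1}(C)$ may have lost $R$-neighbours lying just outside the ball, so the reduced graph of the subgraph need not inherit the full minimum-degree condition on every cluster. This is handled by invoking Remark~\ref{rem:bipartite} (and its analogue for Proposition~\ref{prop:non bipartite}) and restricting the ``good'' cluster set to the smaller ball $B_{D-2}(C)$, whose clusters have all their $R$-neighbours inside $B_{D-1}(C)$; their degrees in the reduced graph of the subgraph therefore match their degrees in $R$, preserving the hypothesis for the subset of clusters that actually carries the embedding.
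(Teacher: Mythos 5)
Your overall two-phase strategy --- run the path-capped embedding algorithm, and on failure re-embed inside a metric ball --- is exactly the paper's strategy, and your choice of radius and the arithmetic $c=6D-5=18\lceil 2/\alpha\rceil-5$ are essentially correct. But the re-embedding phase has two genuine gaps.

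First, you split the re-embedding according to cases (a) and (b) of the proposition, i.e.\ according to whether the \emph{original} reduced graph $R$ is bipartite. What matters, however, is whether the \emph{ball} $R[B_{D-1}(C)]$ is bipartite. If $R$ is non-bipartite but every odd cycle of $R$ lies outside $B_{D-1}(C)$, the ball is bipartite and your ``case (b)'' plan (Lemma~\ref{lem:matching2} plus Proposition~\ref{prop:non bipartite}) simply cannot be run: Proposition~\ref{prop:non bipartite} requires its reduced graph to be non-bipartite. The paper distinguishes cases on the bipartiteness of the ball, and in particular applies Proposition~\ref{prop:bipartite} even in case (b) of the hypothesis when the ball is bipartite.

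Second, and more seriously, the claim that ``Lemma~\ref{lem:matching2} applied to $R[B_{D-1}(C)]$ extracts a cluster matching of the required size'' does not follow. Lemma~\ref{lem:matching2} produces a matching covering at least $2\,\delta(G')$ vertices, where $G'$ is the graph to which it is applied; but $\delta\bigl(G[\bigcup B_{D-1}(C)]\bigr)$ is \emph{not} inherited from $\delta(G)$ --- vertices in clusters of the boundary shell $B_{D-1}(C)\setminus B_{D-2}(C)$ can lose almost all of their neighbours. Your proposed fix, restricting the ``good'' clusters to $B_{D-2}(C)$, preserves the $R$-degree of those inner clusters, but Lemma~\ref{lem:matching2} is stated in terms of a global minimum degree of the host graph, not of a subset of clusters, so this restriction does not rescue the invocation. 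Even granting full degree, a maximal cluster matching in the ball would a priori only be guaranteed to have $\ge \frac{k}{2}\cdot\frac{|R|}{n}$ edges, not $\frac{k}{2}\cdot\frac{|R|}{n}$ edges covering $\ge (1+100\sqrt{\eps})k\cdot\frac{|R|}{n}$ clusters. The missing ingredient --- which is the crux of the paper's argument and does not appear in your write-up --- is that the failure of Phase 1 implies the collection $\mathcal{S}$ of \emph{clusters with remaining free space} inside the ball forms an independent set in $R$. Picking a matching in the radius-$(D-2)$ ball that covers a maximum number of clusters of $\mathcal{S}$, and using this independence, one shows that any uncovered free cluster sees at most one endpoint of each matching edge and nothing outside the matching, which doubles the trivial bound and yields $|V(M)|\ge (1+100\sqrt\eps)k\cdot\frac{|R|}{n}$. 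Without this independence argument the required matching size cannot be obtained in the non-bipartite sub-case.
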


\begin{proof}
	Given $\alpha$, we define 
	\begin{equation*}
	\text{$d_1:=3\lceil\tfrac{2}{\alpha}\rceil-2$ and $d_2:=2(d_1+1)$,}
	\end{equation*}
	and observe that 
	$$c=3d_2+1.$$
	Given  $\eps$ and $M_0$,  let $k_0$ be the maximum of the outputs of Proposition~\ref{prop:bipartite} and Proposition~\ref{prop:non bipartite}, for input $\eps, d_2$ and $2M_0$.
	
	Let $G$ be as in Proposition~\ref{prop:connected-cte}.	Note that if $|V(G)|< (1+100\sqrt{\varepsilon})2k$, then Theorem~\ref{thm:Erdos} implies that $\diam(R)\le\lfloor\frac{6}{\alpha}\rfloor-1\le d_2$. Therefore (and because of Fact~\ref{fact:2}), we may apply either Proposition~\ref{prop:bipartite} or Proposition~\ref{prop:non bipartite}, together with Lemma~\ref{lem:matching2}, to conclude. Thus, from now on we will assume that
	\begin{equation}\label{bound_order}
	|V(G)|\ge(1+100\sqrt{\eps})2k.
	\end{equation} 
	
	Let $T$ be a tree on $k$ edges with $\Delta(T)\le k^{\frac{1}{c}}$ and root $T$ at any vertex. We partition $T$ using Proposition \ref{prop:cut3}, with $\beta:=\tfrac{\varepsilon n}{|R|}$, obtaining a set $S$ of seeds  and a family  $\mathcal{P}$ of pieces. We first try to emulate the embedding scheme used in the proof of Proposition \ref{prop:bipartite}.
	
	Consider the regular partition associated to the reduced graph $R$ of~$G$, and  divide each cluster $X$ into three sets $X_C,X_S,X_L$, with $|X_S|=|X_L|=\lceil10\sqrt{\varepsilon}|X|\rceil$. We are going to  embed $T$ in $|S|$ steps, letting $\phi$  denote the partial embedding defined so far. 
	
	At step $j$ we consider a vertex $s_j\in S$ not embedded yet, but whose parent $u_j$ is already embedded (except in the step $j=1$, in which case we embed the root of $T$ into any cluster of our choice). 
	We know that $\phi(u_j)$ is typical towards the $S$-slice of some adjacent cluster~$Q$. Embed $s_j$ in $Q_S$, choosing $\phi(s_j)$ typical to $U_L$ and to $U_S$, where $U$ is any neighbour of $Q$.
	
	 Now, suppose there is a good pair $(W,Z)$, that is, an edge $WZ$ such that both clusters $W$ and $Z$ have free space of size at least $5\sqrt{\varepsilon}|W|$, and additionally, $\text{dist}(U,W)\leq d_1$. Find a shortest path from $U$ to $W$, say $X_0 X_1\dots X_{t-1}X_t$, where $X_0=R$ and $X_t=W$ and, further, $t\le d_1$. 
	 
	 Consider a piece $P$ adjacent to $s_j$ that is not yet embedded. We map the root of $P$  into the neighbourhood of $\phi(s_j)$ in $(X_0)_L$. We then embed the first~$t$ levels of $P$ into the path $X_0 X_1\dots X_{t-1}X_t$, mapping the vertices from the $i^{\text{th}}$ level of $P$ into unoccupied vertices from $(X_{i})_L$ that are typical towards $(X_{i+1})_L$ and to $(X_{i+1})_S$, for each $i=0,\dots,t-1$ respectively. Finish the embedding of $P$, by mapping the remaining levels into the unoccupied vertices of $(W_C,Z_C)$. For this, we use Lemma~\ref{lem:T1}, mapping the vertices from the $t^{\text{th}}$ level of $P$ into~$W_C$ and picking all the images typical towards the $L$-slice and the $S$-slice of some adjacent cluster. We repeat this procedure for every not yet embedded piece adjacent to $s_j$ and then move on to the next seed.
	
	\smallskip
	
	If every step of this process is successful, then $T$ is satisfactorily embedded into $G$. However, it might  happen that the embedding cannot be completed, because at some step we could not find a good pair $(W,Z)$ at close distance. In that case, consider the seed $s^{*}$ where the process stopped and let $C^{*}$ be the cluster to which $s^{*}$ was assigned. Let us define $H$ as the subgraph of $R$ induced by all those clusters that lie at distance at most $d_1$ from~$C^*$. Further, let $\mathcal{S}$ be the set of all those clusters $C\in V(H)$ that have free space of size at least $5\sqrt{\varepsilon}|C|$. 
	
	 Note that, since the embedding could not be finished, then 
	 \begin{equation}\label{Sindep}
	 \text{$\mathcal{S}$ is an independent set. }
	 \end{equation}
	 
	 By applying Lemma~\ref{lem:dist}, with $q=\lceil\frac{2}{\alpha}\rceil-1$, and since $\delta(R)\ge (1+100\sqrt{\varepsilon})\alpha k\cdot \frac{|R|}{n}$ and by~\eqref{bound_order},  we deduce that $$|V(H)|>(1+100\sqrt{\varepsilon})2k\cdot \frac{|R|}{n}.$$ This is more than twice the space needed for embedding $T$. So, since we have embedded at most $k$ vertices before we declared the embedding to have failed, we conclude that
	 	 \begin{equation}\label{Sbig}
		 |V(\mathcal{S})|\geq (1+200\sqrt{\varepsilon})k\cdot \frac{|R|}{n}.
	 \end{equation}
	 
	Let us define $H'$ as the subgraph of $R$ induced by all  clusters at distance at most $d_1+1$ from $C^*$. So, $V(H')$ consists of~$V(H)$ together with the neighbours of $H$ in $R$.
 
 Forgetting about our previous attempt to embed $T$, we are now going to embed $T$ with the help of our earlier  propositions. We distinguish two cases, depending on whether  $H'$ is bipartite or not.

	\begin{flushleft}\textbf{Case 1:} $H'$ is nonbipartite. \end{flushleft}
	
	 Let $M$ be a matching in $H$ covering a maximal number of clusters from~$\mathcal S$. 
	 We claim that $|V(M)|\ge (1+100\sqrt{\varepsilon})k \cdot \frac{|R|}{n}$. Indeed, otherwise~\eqref{Sbig} implies that  there is a cluster $X\in V(\mathcal{S})\setminus{V(M)}$. By our choice of $M$, and because of~\eqref{Sindep}, we know that $x$ sees at most one endvertex of each edge from $M$, and no cluster outside $V(M)$. This contradicts the fact that $\deg_{H'}(X)\geq (1+100\sqrt{\varepsilon})\frac{k}{2}\cdot \frac{|R|}{n}$. 
	 
	 Hence, as $\diam(H')\leq 2(d_1+1) = d_2$, we can apply Proposition \ref{prop:non bipartite} to $H'$ and the subgraph of $G$ induced by the clusters of $H'$, and we are done.
	
	\begin{flushleft}\textbf{Case 2:} $H'$ is bipartite.\end{flushleft}
	
	Since $|V(H)|\geq(1+100\sqrt{\varepsilon})2k\cdot \frac{|R|}{n}$, one of the bipartition classes of $H'$, say~$A$, satisfies $|A\cap H|\geq (1+100\sqrt{\varepsilon})k\cdot \frac{|R|}{n}$. Since $\deg_{H'}(X)\geq (1+100\sqrt{\varepsilon})\frac{k}{2}\cdot \frac{|R|}{n}$ for each $X\in V(H)$, we can apply Proposition~\ref{prop:bipartite}, together with Remark \ref{rem:bipartite}, to obtain the embedding of $T$.
\end{proof}

\subsection{An approximate  Erd\H os--S\'os result:\\ The proof of Theorem~\ref{thm:ESap}}		\label{sec:ES}

As a quick application of the result from the previous subsection, we now prove our approximate version of the Erd\H os--S\'os conjecture for trees of bounded degree and dense host graphs.

\begin{proof}[Proof of Theorem~\ref{thm:ESap}]
We choose $\eps$ and $\eta$ such that  $0<\varepsilon\ll \eta\ll \delta$. Let $N_0,M_0$ be given by Lemma~\ref{reg:deg} for inputs $\eps$, $\eta$ and $m_0=\tfrac{1}{\eps}$. Set $n_0$ as the maximum between $N_0$ and the output of Proposition~\ref{prop:connected-cte}, with input $\varepsilon, \alpha:=\tfrac{1}{2}$ and $M_0$.

Now, let $G$ be a $n$-vertex graph with $d(G)\ge (1+\delta)k$. It is well-known that~$G$ has a subgraph $H$ with $d(H)\ge (1+\delta)k$ and minimum degree at least $(1+\delta)\frac{k}{2}$. By Lemma~\ref{reg:deg}, we can find a subgraph $H'\subseteq H$, which admits an $(\varepsilon,\eta)$-regular partition and satisfies $\delta(H')\ge (1+\tfrac{\delta}{2})\frac{k}{2}$ and $d(H')\ge (1+\tfrac{\delta}{2})k$. Let $R$ be the $(\varepsilon,\eta)$-reduced graph of $H'$. By averaging, there is a connected component $C$ of $R$ such that $d(C)\ge (1+\frac{\delta}{2})k\cdot \tfrac{|R|}{|H'|}$. In particular, $|\bigcup C|\ge (1+\frac{\delta}{2})k$. Clearly,  $C$  preserves the minimum degree in $H'$, that is,~$\delta(H'[C])\ge (1+\tfrac{\delta}{2})\tfrac{k}{2}$. 
Since $c=67=18\cdot 4-5$, we can apply Proposition~\ref{prop:connected-cte} to $C$ in order  to obtain the desired embedding of $T$.
\end{proof}

Clearly, this proof can easily be modified to give the following corollary (which will not be used anywhere in this paper). For $\eps>0$, we say that a subgraph $C$ of an $n$-vertex graph $G$ is {\em essentially $\eps n^2$-edge-connected} if, after deleting any set of at most $\eps n^2$ edges of $G$, there is a component containing~$C$.

\begin{corollary}\label{coro}
For every $\delta >0$ there is $n_0\in\NN$ such that for each $n$-vertex graph $G$  with $n\ge  n_0$ and  for each $k$ with  $\delta n\le k\le n$  the following holds. If~$G$ has an essentially $\delta n^2$-edge connected subgraph $C$ of size at least $(1+\delta )k$ with $\delta(C)>(1+\delta) \frac k2$, then $C$ contains every $k$-edge tree  of maximum degree at most $k^{\frac{1}{67}}$.	
\end{corollary}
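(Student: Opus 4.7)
The strategy is to follow the proof of Theorem~\ref{thm:ESap} very closely, but with the averaging step (which there extracts a dense connected component of the reduced graph) replaced by an appeal to the essential edge-connectedness of $C$.

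Concretely, I would fix parameters $\varepsilon\ll\eta\ll\delta$ (small enough that $\eta+2\varepsilon<\delta$ and that the slack absorbs the error terms below) and apply Lemma~\ref{reg:deg} to $G$, obtaining a subgraph $G'\subseteq G$ with an $(\varepsilon,\eta)$-regular partition and reduced graph $R$. Standard bookkeeping gives $|E(G)\setminus E(G')|\le(\eta+2\varepsilon)n^2/2<\delta n^2$. The essential $\delta n^2$-edge-connectedness of $C$ then forces $V(C)$ to lie in a single connected component of $G$ minus these discarded edges; since any vertex of $V(C)\setminus V(G')$ would be isolated there and $|C|\ge 2$, this in fact gives $V(C)\subseteq V(G')$ and further $V(C)\subseteq\bigcup K$ for a single connected component $K$ of $R$.

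I would then apply Proposition~\ref{prop:connected-cte} with $\alpha=\tfrac12$ to the subgraph $H:=G'[\bigcup K]$. The reduced graph of $H$ is the connected graph $R[K]$, and for every $v\in V(C)$ we have $\deg_H(v)\ge\delta(C)-(\eta+\varepsilon)n\ge(1+3\delta/4)\tfrac k2$, using $k\ge\delta n$ and the choice of parameters. The size $|\bigcup K|\ge|V(C)|\ge(1+\delta)k$ supplies condition (b) of the Proposition when $R[K]$ is non-bipartite; when $R[K]$ is bipartite, the minimum-degree bound in each side (combined with the large total size of $\bigcup K$ when $|V(C)|$ is comfortably bigger than $k$) makes one side large enough to give condition (a), possibly after swapping labels. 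Since $c=18\cdot 4-5=67$ in the Proposition for $\alpha=\tfrac12$, every tree with $k$ edges and maximum degree at most $k^{1/67}$ embeds into $H$, and hence into $G$.

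The main obstacle is that clusters of $K$ may contain vertices outside $V(C)$ for which no minimum-degree bound is guaranteed, so the uniform hypothesis of Proposition~\ref{prop:connected-cte} must be re-established. I would handle this by restricting attention to the subfamily of clusters $V_i\in K$ with $|V_i\cap V(C)|\ge(1-\sqrt\varepsilon)m$; the leftover ``bad'' clusters carry a total vertex mass of at most $n-|V(C)|$ distributed across at most $\ell/\sqrt\varepsilon$ clusters, which is easily absorbed into the slack in both the minimum-degree and size conditions. A parallel clean-up takes care of the edge case in condition (a) when $R[K]$ is bipartite and close in size to $(1+\delta)k$, mirroring the small-graph case treated in the proof of Proposition~\ref{prop:connected-cte} itself.
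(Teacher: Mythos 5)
Your proposal has a genuine gap: the conclusion it produces is not the conclusion the corollary asks for. You apply the regularity lemma to $G$, identify a component $K$ of $R$ with $V(C)\subseteq \bigcup K$, and invoke Proposition~\ref{prop:connected-cte} on $H:=G'[\bigcup K]$. This embeds $T$ into $H\subseteq G$. But the corollary asserts that $C$ itself contains $T$: the image of $T$ must lie in $V(C)$ and use only edges of $C$. Nothing in your construction confines the embedding to $C$ --- the clusters of $K$, the $L$-slice and $S$-slice material used in the propositions, and the regular pairs in which the bulk of $T$ is placed all live in $G'$ and are in general disjoint from $C$ outside $V(C)$, and the regular-pair edges are edges of $G'$, not necessarily of $C$. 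Your ``main obstacle'' paragraph frames the difficulty only as a minimum-degree issue for vertices of $V(G')\setminus V(C)$; that is not the heart of the problem. Moreover, your clean-up is quantitatively off: if a cluster is ``bad'' when $|V_i\cap V(C)|<(1-\sqrt\varepsilon)m$, then each bad cluster has more than $\sqrt\varepsilon m$ vertices outside $V(C)$, so the number of bad clusters is at most $\bigl(n-|V(C)|\bigr)/(\sqrt\varepsilon\,m)$, and their \emph{total} vertex mass is up to $\bigl(n-|V(C)|\bigr)/\sqrt\varepsilon$ rather than $n-|V(C)|$; when $|V(C)|$ is not close to $n$ this can exceed $n$, so it is not ``easily absorbed''. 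Finally, even after discarding such clusters one would still have to argue that the remaining reduced graph stays connected and still satisfies the size/minimum-degree hypotheses of Proposition~\ref{prop:connected-cte}, which you do not do.

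The route the paper intends (``this proof can easily be modified'') is to keep the host graph equal to $C$ throughout: replace the opening step of the proof of Theorem~\ref{thm:ESap} (the extraction of a minimum-degree subgraph $H$ and the averaging over components of its reduced graph) by directly regularizing $C$ with parameters $\varepsilon\ll\eta\ll\delta$. The regularization discards at most $(\eta+2\varepsilon)|C|^2/2\le\delta n^2$ edges, so the essential $\delta n^2$-edge-connectedness gives that the reduced graph of $C$ is connected; the hypothesis $\delta(C)>(1+\delta)\tfrac k2$ together with $k\ge\delta n\ge\delta|C|$ gives $\delta(C')\ge(1+\tfrac\delta2)\tfrac k2$ after cleaning; and $|C|\ge(1+\delta)k$ supplies the size requirement. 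Then Proposition~\ref{prop:connected-cte} with $\alpha=\tfrac12$ and $c=67$ applies to $C$ (in place of the averaged component), and since everything was carried out inside $C$ the embedding lands in $C$, as required.
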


\section{The key embedding lemma}\label{sec:general_lemma}

In the current section, we present and prove our key embedding lemma, namely Lemma~\ref{lem:superlemma}. This lemma describes a series of configurations which, if they appear in a graph $G$, allow us to embed any bounded degree tree of the right size into $G$. So, in a way the lemma can be seen as a compendium of favourable scenarios. 

Before stating the lemma we need two simple  definitions.

	\begin{definition}[$\theta$-see]
		Let $\theta\in(0,1)$. A vertex $x$ of a graph~$H$ $\theta$-sees a set $U\subseteq V(H)$ if it has at least $\theta |U|$ neighbours in~$U$.\\
		Furthermore,  if $ C$ is a component of some reduced graph of $H-x$, we say that $x$ $\theta$-sees $ C$ if $x$ has at least $\theta|\bigcup V(C)|$ neighbours in $\bigcup V(C)$.
	\end{definition}
	
	\begin{definition}[$(k,\theta)$-small, $(k,\theta)$-large]
		Let $k\in\NN$ and let $\theta\in(0,1)$. A nonbipartite  graph $G$ is said to be $(k,\theta)$-small if $|V(G)|<(1+\theta)k$. A bipartite graph $H=(A,B)$ is said to be $(k,\theta)$-small if $\max\{|A|,|B|\}<(1+\theta)k$.\\
		If a graph is not $(k,\theta)$-small, we will say that it is $(k,\theta)$-large.
	\end{definition}
	
	We are now ready for the key lemma.

	\begin{lemma}[Key embedding lemma]\label{lem:superlemma}
		For each $\alpha\in[\frac{1}{2},1)$,  for each $\eps \in(0,10^{-10})$ and for each $M_0\in\mathbb{N}$ there is $n_0\in\NN$ such that for all $n,k\ge n_0$ the following holds.\\
		Let $G$ be an $n$-vertex graph of minimum degree at least $(1+\sqrt[4]\eps)\alpha k$ and let $T$ be a $k$-edge tree whose maximum degree is bounded by $k^{\frac{1}{c}}$, where  $c=18\lceil\frac{2}{\alpha}\rceil-5$. Let $x \in V(G)$, and let $R$ be an $(\eps,5\sqrt\eps)$-reduced graph of $G-x$, with $|R|\le M_0$, such that at least one of the following conditions (I)--(IV) holds:
		\begin{enumerate}[(I)]
			\item\label{cond:1} $R$ has a $(k\cdot\tfrac{|R|}{n},\sqrt[4]\eps)$-large nonbipartite component; or
			\item\label{cond:2} $R$ has a $(k_1\cdot\tfrac{|R|}{n},\sqrt[4]\eps)$-large bipartite component, where $k_1$ is the size of the larger bipartition class of $T$; or
			\item\label{cond:3} $R$ has a $(\tfrac{2k}{3}\cdot \tfrac{|R|}{n},\sqrt[4]\eps)$-large bipartite component such that $x$ $\sqrt\eps$-sees both sides of the bipartition; or
			\item\label{cond:4} $x$ $\sqrt\eps$-sees two components $C_1$ and $C_2$ of $R$ in a way that one of the following holds:
			\begin{enumerate}[(a)]
				\item\label{cond:4a} $x$ sends at least one edge to a third component $C_3$ of $R$; 
				\item\label{cond:4b} there is $i\in\{1,2\}$ such that $C_i$ is nonbipartite and $(\tfrac{2k}{3}\cdot \tfrac{|R|}{n},\sqrt[4]\eps)$-large; 
				\item\label{cond:4c} there is $i\in\{1,2\}$ such that $C_i$ is bipartite and $x$ sees both sides of the bipartition; 
				\item\label{cond:4d} there is $i\in\{1,2\}$ such that $C_i$ is bipartite with parts $A$ and $B$, $\min\{|A|,|B|\}\ge(1+\sqrt[4]\eps)\tfrac{2k}{3}\cdot \tfrac{|R|}{n}$ and $x$ sees only one side of the bipartition;
				\item\label{cond:4e} $C_1$ and $C_2$ are bipartite with parts $A_1,B_1$ and $A_2,B_2$, respectively, $\min\{|A_1|,|B_2|\}\ge(1+\sqrt[4]\eps)\tfrac{2k}{3}\cdot \tfrac{|R|}{n}$ and $x$ does not see $B_1\cup B_2$.
			\end{enumerate}
		\end{enumerate}	
		Then $T$ embeds in $G$.
	\end{lemma}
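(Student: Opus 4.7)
The overall plan is to reduce each configuration (I)--(IV) to an application of the tree-embedding machinery from Sections~\ref{sec:proof1} and~\ref{improve}, combined with the tree-cutting tools from Section~\ref{sec:trees}, using the high-degree vertex $x$ as a pivot whenever the component structure of $R$ is too fragmented to fit all of~$T$ inside a single component.

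For (I) and (II), no pivoting is needed: the large component $C$ of $R$ inherits a suitable minimum degree from $G$ via Fact~\ref{fact:2}, and the hypotheses on $|C|$ and on the bipartition class size in (II) are tailored to those of Proposition~\ref{prop:connected-cte} (note that the exponent $c=18\lceil 2/\alpha\rceil-5$ in Lemma~\ref{lem:superlemma} is precisely the one appearing there). A direct application of that proposition to $C$ embeds $T$ and we are done.

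For (III) and (IV), I would use $x$ as a pivot. First I choose a cutvertex $z$ of $T$: Proposition~\ref{prop:cut5} gives $z$ together with a $2$-colouring of $T-z$ whose colour classes have size at most $\tfrac{2k}{3}$ and $\tfrac{k}{2}$, while Lemma~\ref{lem:cut1} combined with Lemma~\ref{lem:num} gives a $\tfrac{k}{2}$-separator together with a partition of the components of $T-z$ into groups of controlled total size. After setting $\phi(z)=x$, I would then distribute the components of $T-z$ among the components of $R$ that $x$ sees, matching sizes (and, where necessary, colour classes) to the space available in each $C_i$. In (III) I would apply Proposition~\ref{prop:cut5} and send the two colour classes to the two sides of the large bipartite component. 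In (IVa), Lemma~\ref{lem:num}(i) together with Remark~\ref{rem:F_3} splits the components of $T-z$ into three groups of size at most $\lceil\tfrac{k}{2}\rceil$, with $I_3$ reduced to a single small piece that is embedded via the one edge from $x$ to~$C_3$. In (IVb)--(IVe), Lemma~\ref{lem:num}(ii) or Proposition~\ref{prop:cut5} splits the components into two groups of sizes at most $\tfrac{2k}{3}$ and $\tfrac{k}{2}$ that are sent to $C_1$ and $C_2$, respectively. Each resulting forest is then embedded by Corollary~\ref{emb:forest} (bipartite case) or Corollary~\ref{emb:forest2} (nonbipartite case), using the ``moreover'' clauses of those corollaries to force the roots (the children of~$z$ in $T$) into the $\sqrt\eps$-neighbourhood of $x$ inside the appropriate component; since $\Delta(T)\le k^{1/c}$ is much smaller than $\sqrt\eps\,|\bigcup V(C_i)|$, there is ample room for the roots.

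The main obstacle is the parity bookkeeping in (IVc)--(IVe): when $C_i$ is bipartite and $x$ sees only one side, the children of $z$ mapped into $C_i$ are forced to land on that side, which in turn forces a particular colour class of the pieces going into $C_i$ to the adjacent side. Reconciling this with the size bounds requires Proposition~\ref{prop:cut5} (rather than a cruder cut) so that both total and per-colour sizes are controlled, uses critically the explicit hypotheses $\min\{|A|,|B|\}\ge(1+\sqrt[4]\eps)\tfrac{2k}{3}\cdot\tfrac{|R|}{n}$ appearing in (IVd)--(IVe), and exploits the avoidance variants noted in Remarks~\ref{rem:bipartite2F} and~\ref{rem:nbipartite2} so that the forests placed into the two different components do not clash with the already-used neighbourhoods of~$x$. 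Once these ingredients are aligned, each sub-case reduces to a routine verification that the minimum degree of $G$ passes through Fact~\ref{fact:2} to each $C_i$ and that the corresponding forest-embedding corollary applies.
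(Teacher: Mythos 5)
Your overall plan matches the paper's: cases (I)--(II) reduce directly to Proposition~\ref{prop:connected-cte}; for (III)--(IV) you embed a cutvertex $z$ of $T$ into $x$, split the components of $T-z$ using Lemma~\ref{lem:cut1}, Lemma~\ref{lem:num} and/or Proposition~\ref{prop:cut5}, and then embed each group into a component of $R$ that $x$ sees, via Corollaries~\ref{emb:forest} and~\ref{emb:forest2} with their ``moreover'' clauses controlling the roots. Your handling of (III), (IVa), (IVb), (IVd) and (IVe) lines up with what the paper does.

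There is, however, a genuine gap in your treatment of case (IVc). You claim that in (IVb)--(IVe) one can always split the components of $T-z$ into two groups of sizes at most $\tfrac{2k}{3}$ and $\tfrac{k}{2}$ via Lemma~\ref{lem:num}(ii) or Proposition~\ref{prop:cut5}. In (IVd) and (IVe) this is fine because the hypothesis guarantees \emph{both} sides of the target bipartite component have at least $(1+\sqrt[4]\eps)\tfrac{2k}{3}\cdot\tfrac{|R|}{n}$ clusters, so even a maximally unbalanced forest of total size $\le\tfrac{2k}{3}$ fits. But in (IVc) the only lower bound on the sides of $C_i$ is the one coming from $\delta(G)\ge(1+\sqrt[4]\eps)\alpha k$ with $\alpha$ possibly $=\tfrac12$: each side may have only about $\tfrac k2\cdot\tfrac{|R|}{n}$ clusters. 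If the $\tfrac{2k}{3}$-sized group $\mathscr{J}_1$ is wildly unbalanced (e.g.\ a union of star-like trees rooted at centres), one of its colour classes may well exceed $\tfrac k2$ and will not fit. Proposition~\ref{prop:cut5} does not rescue this: it produces a single global $2$-colouring of $T-z_0$ with $|c_0|\le\tfrac{2k}{3}$, $|c_1|\le\tfrac k2$, suited to the \emph{one} large bipartite component of scenario (III), but it gives no partition of the components of $T-z_0$ into two groups going into two \emph{different} targets, and restricting that colouring to an arbitrary subgroup of the components destroys the balance guarantees.

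The correct route for (IVc), which your proposal misses, uses the three-way split of Lemma~\ref{lem:num}(i): send the largest group $\mathcal{F}_1$ (size $\le\lceil\tfrac k2\rceil$) greedily into $C_2$, and send $\mathcal{F}_2$ and $\mathcal{F}_3$ into the bipartite $C_1$ with \emph{opposite} colour orientations, i.e.\ so that the heavier colour class of $\mathcal{F}_2$ and the lighter colour class of $\mathcal{F}_3$ land on the same side $A$, and symmetrically on $B$. By Remark~\ref{rem:F_3}, $\mathcal{F}_3$ is a single tree, so its orientation can be flipped by placing its unique root into $A$ or into $B$ as convenient; this is where scenario (IVc)'s hypothesis that $x$ sees \emph{both} sides of the bipartition is used crucially (a freedom you do not have in (IVd)/(IVe)). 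The inequality $|\bigcup\mathcal{F}_i|+\tfrac{|\bigcup\mathcal{F}_{5-i}|}{2}\le\tfrac k2$ for $i\in\{2,3\}$ then ensures each side of $C_1$ receives at most $\tfrac k2$ vertices, which matches the minimum-degree guarantee.
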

	
	\begin{proof} Let $k'_0$ be the maximum of the outputs $k_0$ of Proposition~\ref{prop:connected-cte}, Corollary~\ref{emb:forest} and Corollary~\ref{emb:forest2}, for inputs  $\eps$, $d=\frac 6\alpha$ and $2M_0$, and choose $n_0:=k'_0+1$ as the numerical  output of Lemma~\ref{lem:superlemma}. 
		
		Now assume we are given  an $n$-vertex graph $G$  with $x\in V(G)$, and let $T$ be a $k$-edge tree as in Lemma~\ref{lem:superlemma}. Let $R$ be the $(\eps,5\sqrt\eps)$-reduced graph of $G-x$. An easy computation shows that 
		\begin{equation}\label{minim}
			\delta(R)\ge(1+\tfrac{1}{2}\sqrt[4]\eps)\alpha k\cdot \frac{|R|}{n}\ge(1+100\sqrt\eps)\alpha k\cdot \frac{|R|}{n},
		\end{equation}
		where the last inequality follows since $\eps\leq 10^{-10}$.
		Furthermore, note that $R$ must fulfill one of the conditions (I)--(IV) from Lemma~\ref{lem:superlemma}. 
		If $R$ contains a $(k\cdot \tfrac{|R|}{n},\sqrt[4]\eps)$-large nonbipartite component or a $(k_1\cdot\tfrac{|R|}{n},\sqrt[4]\eps)$-large bipartite component, then we can 
		conclude by Proposition~\ref{prop:connected-cte}. 
		
		So we can discard scenarios~\eqref{cond:1} and~\eqref{cond:2} from Lemma~\ref{lem:superlemma}. Therefore, by Theorem~\ref{thm:Erdos}, and by~\eqref{minim}, we can assume that every connected component $C$ of $R$ satisfies 
		\begin{equation}\label{diami}
			\diam(C) \leq \frac{3|V(C)|}{\delta (C)+1}\leq \frac{3(1+\sqrt[4]\eps)2k\cdot\tfrac{|R|}{n}}{(1+\tfrac{1}{2}\sqrt[4]\eps)\alpha k\cdot \tfrac{|R|}{n}}\leq \frac 6\alpha+1,
		\end{equation}
		and thus
		\begin{equation}\label{c_diami}
			c\ge 3\cdot \diam(C)+1.
		\end{equation}
		So, the maximum degree of $T$ and the diameter of the components are in the right relation to each other, meaning that  we could  apply Corollaries~\ref{emb:forest} and~\ref{emb:forest2} to each connected component of $R$ (if the other conditions of these corollaries hold).  
		
		\smallskip
		
		In order to embed $T$  under scenarios~\eqref{cond:3} and~\eqref{cond:4}, we use the results from Section~\ref{sec:cutT}. \\
		
		\textbf{Case 1 (scenario~\eqref{cond:3}):} $R$ has a $(\frac{2k}{3}\cdot \tfrac{|R|}{n},\sqrt[4]\eps)$-large bipartite component~$C$ such that $x$ $\sqrt\eps$-sees both sides of the bipartition.\\
		
		Applying Proposition~\ref{prop:cut5} to $T$, we obtain a cut-vertex $z_0\in V(T)$ and a proper $2$-colouring $c:V(T-z_0)\rightarrow\{0,1\}$ of $T-z_0$ such that
		\begin{equation*}
			\text{$|c_1|\leq |c_0|$, \ \ $|c_0|\leq\frac{2k}{3}$ \ \ and \ \ $|c_1|\leq \frac{k}{2}$.}
		\end{equation*}
		Let us note that, because of the bound on $k_0$, the number of components of $T-z_0$ is bounded by
		\begin{equation}\label{n:forest}\Delta(T)\le k^\frac{1}{c}\le \frac{\eps k}{M_0}\le \frac{\eps n}{|R|}.\end{equation}
		Now, we map $z_0$ into $x$. Recalling~\eqref{minim},~\eqref{diami},~\eqref{c_diami} and the fact that $T-z_0$ is a $(\frac{2k}{3},\frac{k}{2},\tfrac 1c)$-forest we can apply Corollary~\ref{emb:forest} to embed $T-z_0$ into $C$, and by~\eqref{n:forest} we may choose the images of the roots of $T-z_0$ as neighbours of $x$.\\
		
		\textbf{Case 2 (scenario~\eqref{cond:4}):} $x$ $\sqrt\eps$-sees two components $C_1$ and $C_2$ of $R$.\\
		
		Let $z_1\in V(T)$ be the vertex given by Lemma \ref{lem:cut1} applied to $T$, with any leaf $v$. Let $\mathscr{T}$ be the set of connected components of $T-z_1$. Then $\mathscr{T}$ is a family of at most $\Delta(T)$ rooted trees whose roots are neighbours of $z_1$ in $T$, and $|V(T')|\le \lceil\frac{k}{2}\rceil$ for every $T'\in\mathscr{T}$. 
		
		Apply Lemma~\ref{lem:num}~(i) to $\mathscr{T}$ to obtain a partition of $\mathscr{T}$ into three families of trees $\mathcal{F}_1,\mathcal{F}_2$ and $\mathcal{F}_3$, where $\mathcal{F}_3$ could be empty, such that 
		\begin{equation}\label{FFF}
			|V(\bigcup\mathcal{F}_3)|\le |V(\bigcup\mathcal{F}_2)|\le |V(\bigcup\mathcal{F}_1)|\le\left\lceil\frac{k}{2}\right\rceil.
		\end{equation}
		For later use, let us record here that 
		\begin{equation}\label{few_trees}
			\text{$|\mathcal{F}_1|+|\mathcal{F}_2|+|\mathcal{F}_3|\leq \Delta(T)\leq \frac{\eps n}{|R|}$.}
		\end{equation}	
		Furthermore, due to Remark~\ref{rem:F_3}, we know that
		\begin{equation}\label{oneinF3}
			|\mathcal{F}_3|\le 1.
		\end{equation}
		
		Similarly, applying Lemma~\ref{lem:num}~(ii) to $\mathscr{T}$ we obtain a partition of  $\mathscr T$ into two  families of trees $\mathscr{J}_1$ and $\mathscr{J}_2$ such that 
		\begin{equation}\label{JJJ}
			|V(\bigcup\mathscr{J}_2)|\le \frac k2 \text{\ \ and \ \ } |V(\bigcup\mathscr{J}_2)|\le|V(\bigcup\mathscr{J}_1)|\le\frac{2k}{3},
		\end{equation}
		and again, we know that
		\begin{equation}\label{few_treesJ}
			\text{$|\mathscr{J}_1|+|\mathscr{J}_2|\leq \Delta(T)\leq \frac{\eps n}{|R|}$.}
		\end{equation}	
		
		We split the remainder of the proof into six cases, according to which of the conditions \eqref{cond:4a}, \eqref{cond:4b}, \eqref{cond:4c}, \eqref{cond:4d} or \eqref{cond:4e} holds. Depending on the case we will make use of partition $\{\mathcal{F}_i\}_{i=1,2}$ or $\{\mathscr{J}_i\}_{i=1,2,3}$.\\ 

		\textbf{Case 2a (scenario~\eqref{cond:4a}):} $x$ $\sqrt\eps$-sees two components $C_1$, $C_2$ and sends at least one edge to a third component $C_3$.\\
		
		We embed $z_1$ into $x$, and then proceed to embed the roots of the trees from~$\mathcal{F}_i$  into the neighbourhood of $x$ in $C_i$, for each $i=1,2,3$. This is possible since by~\eqref{oneinF3}, there is at most one root to embed into $C_3$. Furthermore, by~\eqref{few_trees}, there are at most $\Delta (T)\leq\tfrac{\eps n}{|R|}$ roots to be embedded into $C_i$, for $i=1,2$.
		Finally, because of the minimum degree in $G$, and because of~\eqref{FFF}, we can greedily embed the remaining vertices of each forest  $\mathcal{F}_i$ into $C_i$. 
		\\
		
		\textbf{Case 2b (scenario~\eqref{cond:4b}):} $x$ $\sqrt\eps$-sees two components $C_1$ and $C_2$, and one of these components, say $C_1$, is nonbipartite and $(\frac{2k}{3}\cdot\tfrac{|R|}{n},\sqrt[4]\eps)$-large.\\
		
		We map $z_1$ into $x$, and then embed the roots of $\mathscr{J}_2$ into $C_2$ (we know that $x$ has enough neighbours in $C_2$ because of~\eqref{few_trees}).
		We then embed the rest of  $\bigcup\mathscr{J}_2$ greedily into $C_2$.
		
		For the trees from  $\mathscr{J}_1$, we can make use of Corollary~\ref{emb:forest2} and Lemma~\ref{lem:matching2}, whose conditions hold by~\eqref{minim},~\eqref{diami},~\eqref{c_diami} and~\eqref{JJJ}, to map $\bigcup\mathscr{J}_1$ to $C_1$. \\
		
		\textbf{Case 2c (scenario~\eqref{cond:4c}):} $x$ $\sqrt\eps$-sees two components $C_1$ and $C_2$, one of these components, say $C_1$, is bipartite, and $x$ sees both sides $A$, $B$ of the bipartition.\\
		
		First, we map $z_1$ into $x$ and then embed $\bigcup\mathcal{F}_1$ greedily into $C_2$ (embedding the roots into neighbours of $x$, as before). For the remaining forests, $\mathcal{F}_2$ and $\mathcal{F}_3$, observe that for any proper $2$-colouring of $\bigcup\mathcal{F}_2$ and $\bigcup\mathcal{F}_3$, and for any  $i=2,3$, the larger colour class of $\bigcup\mathcal{F}_i$ and the smaller colour class of $\bigcup\mathcal{F}_{5-i}$ add up to at most
		\begin{equation}\label{F1F2fit}
			|\bigcup\mathcal{F}_i| + \frac{|\bigcup\mathcal{F}_{5-i}|}{2} \leq \frac{|\bigcup\mathcal{F}_1|+|\bigcup\mathcal{F}_2|+|\bigcup\mathcal{F}_3|}{2} = \frac{k}{2}.
		\end{equation}			 
		
		Now, our aim is to embed the roots and all the even levels of $\bigcup\mathcal{F}_2$ into $A$, while embedding the odd levels into $B$. Moreover, we plan to embed  $\bigcup\mathcal{F}_3$ in a way that its  larger colour class goes to the same set as the smaller colour class of  $\bigcup\mathcal{F}_2$.
		
		As $x$ $\sqrt\eps$-sees $C_1$, we may assume that $x$ $\tfrac{\sqrt\eps}{2}$-sees $A$. 
		Moreover, since $x$ has at least one neighbour $b\in B$, and since $\bigcup\mathcal{F}_3$ has only one root  because of~\eqref{oneinF3}, we can choose whether we map the single root of $\bigcup\mathcal{F}_3$ into $b$, or into some neighbour of $x$ in $A$. We will make this choice according to our plan above (that is, it will depend on whether the even or the odd levels of $\bigcup\mathcal{F}_2$ contain more vertices).
		
		We then greedily embed the rest of $\bigcup\mathcal{F}_3$ into $C_1$. Now, we can make use of Corollary~\ref{emb:forest} together with Remark~\ref{rem:bipartite2F}, whose conditions hold by~\eqref{c_diami} and by~\eqref{F1F2fit}, to complete the embedding of $\bigcup\mathcal{F}_2$ into $C_1$, while avoiding the image of $\bigcup\mathcal{F}_3$.\\
		
		\textbf{Case 2d (scenario~\eqref{cond:4d}):}  $x$ $\sqrt\eps$-sees two components $C_1$ and $C_2$, one of them is bipartite with parts $A$ and $B$, $\min\{|A|,|B|\}\ge(1+\sqrt[4]\eps)\frac{2k}{3}\cdot\tfrac{|R|}{n}$ and $x$ sees only one side of the bipartition.\\
		
		Let us assume that $C_1$ is the bipartite component with parts $A$ and $B$ containing at least $(1+\sqrt[4]\eps)\frac{2k}{3}\cdot\tfrac{|R|}{n}$ clusters each and that $x$ only sees the set $A$. We map $z_1$ into $x$ and then embed $\bigcup\mathscr{J}_2$ greedily into $C_2$ (embedding the roots into neighbours of $x$, as before). Note that there are few roots of trees in $\mathscr{J}_1\cup\mathscr{J}_2$, because of~\eqref{few_treesJ}. 
		Since $\mathscr{J}_1$ is a $(\tfrac{2k}{3},\tfrac{k}{2},\tfrac{1}{c})$-forest, we may apply Corollary~\ref{emb:forest} so that we can embed $\bigcup\mathscr{J}_1$ into $C_1$ in a way that the images of its roots are neighbours of~$x$. This works because of~\eqref{JJJ}.\\
		
		\textbf{Case 2e (scenario~\eqref{cond:4e}):}  $x$ $\sqrt\eps$-sees two bipartite components $C_1$ and $C_2$, with parts $A_1,B_1$ and $A_2,B_2$ respectively, such that $\min\{|A_1|,|B_2|\}\ge(1+\sqrt[4]\eps)\frac{2k}{3}\cdot\tfrac{|R|}{n}$ and $x$ sees only $A_1$ and $A_2$.\\
		
		 We map $z_1$ into $x$, note  that $x$ $\sqrt\eps$-sees $A_1$ and $A_2$. Consider the colouring $\varphi$ that $T$ induces in $\bigcup\mathscr{J}_1$. If the roots of the trees in $\mathscr{J}_1$ are contained in the heavier colour class of $\varphi$, then we embed $\bigcup\mathscr{J}_1$ into $C_1$, otherwise we embed $\bigcup\mathscr{J}_1$ into $C_2$. In any case, and since $\mathscr{J}_1$ is a $(\tfrac{2k}{3},\tfrac k2,\tfrac1c)$-forest, we may use Corollary~\ref{emb:forest} to embed $\bigcup\mathscr{J}_1$ (taking care of mapping the roots into neighbours of $x$). Finally, we greedily embed $\bigcup\mathscr{J}_2$ into the remaining component.\\
		
		This completes the proof of Lemma~\ref{lem:superlemma}.
	\end{proof}

\section{Embedding  trees with  degree conditions}\label{s:mindegree} 

In this section we finally prove our main results, namely Theorems~\ref{main:1}, ~\ref{main:1-2} and~\ref{main:2}. All of them will be proved using~Lemma~\ref{lem:superlemma}, which, fortunately, makes all  these proofs quite straightforward. 

We begin by proving the principal result of this article,  Theorem~\ref{main:1}, in Section~\ref{principal}.
Then, we show Theorem~\ref{main:2} (the approximate version of $\frac 23$--conjecture) in Section~\ref{sec:2/3}. In Section~\ref{sec:Delta}, we show Theorem~\ref{main:1-2} (our extension of Theorem~\ref{main:1} to constant degree trees).

\subsection{An approximate version of Conjecture~\ref{2k,k/2}:\\ the proof of Theorem~\ref{main:1}}\label{principal}

Given $\delta\in(0,1)$, we set 
	$$ \varepsilon:=\frac{\delta^4}{10^{10}},\mbox{ and } \alpha:=\frac{1}{2}.$$
	Let $N_0,M_0$ be given by~Lemma~\ref{reg:deg}, with input $\varepsilon$ as defined above, $\eta:=5\sqrt\eps$ and $m_0:=\tfrac{1}{\eps}$, and let $n'_0$ be given by Lemma~\ref{lem:superlemma}, with input $\alpha, \varepsilon$ and $M_0$. We choose  $n_0:=(1-\eps)^{-1}\max\{n'_0,N_0\}+1$ as the numerical output of the theorem. 
	
Now, let $G$ be an $n$-vertex graph with minimum degree at least $(1+\delta)\frac{k}{2}$ and maximum degree at least $2(1+\delta)k$, where 
\begin{equation}\label{n_k_deltan}
n\ge k\ge \delta n
\end{equation}
and $n\geq n_0$. Let~$T$ be a $k$-edge tree with maximum degree at most $k^{\frac 1c}$, where $c=67=18\cdot 4-5$.

We apply Lemma~\ref{reg:deg} to $G-x$ so that we get a subgraph $G'\subseteq G-x$, with $|G'|\ge (1-\varepsilon)(n-1)$, that admits an $(\varepsilon,5\sqrt{\varepsilon})$-regular partition. Moreover, the minimum degree in $G'$ is at least
\begin{equation}\label{mindeg}\delta(G')\ge (1+\delta)\frac k2-(\eps+5\sqrt\eps)(n-1)-1\ge (1+\sqrt[4]\eps)\frac k2.\end{equation}
Let $R$ be the corresponding  $(\varepsilon,5\sqrt\eps)$-reduced graph of $G'$.
Our aim is to show that $R$ fulfills at least one of the conditions~\eqref{cond:1}--\eqref{cond:4} from Lemma~\ref{lem:superlemma}, for inputs $\alpha, \eps$ and $M_0$. We will assume that 
\begin{equation}\label{all_small}
\text{all the components of $R$ are $(k\cdot\tfrac{|R|}{|G'|},\sqrt[4]\eps)$-small,}
\end{equation}
as otherwise  either we have ~\eqref{cond:1} or~\eqref{cond:2} from Lemma~\ref{lem:superlemma}, and we are done. 

Since $G'$ misses less than $\varepsilon n$ vertices from $G$, we have that
\begin{equation}\label{maxim}
\deg_{G}(x,G')\ge 2(1+\tfrac{\delta}{2})k\ge 2(1+100\sqrt[4]\eps)k.
\end{equation}
 Suppose that~$x$ does not $\sqrt\eps$-see any component of $R$. By~\eqref{n_k_deltan} and by~\eqref{maxim}, this would mean that
\begin{equation}
2\delta n\le 2(1+\tfrac{\delta}{2})k\le\deg_{G}(x,G')=\sum_{C} \deg_{G}(x,\bigcup V(C))\le\sqrt\eps n,\tag{$\heartsuit$}
\end{equation}
a contradiction.
Therefore, there is some component $C_1$ of $R$ receiving more than $\sqrt\eps|\bigcup V(C_1)|$ edges from $x$. 

By~\eqref{all_small},
 $x$ can have at most $2(1+\sqrt[4]\eps)k$ neighbours in $C_1$. So by~\eqref{maxim}, there are more than $\sqrt[4]\eps k$ neighbours of $x$ outside $C_1$. Following the same reasoning as in $(\heartsuit)$, there must be a second component $C_2$ receiving at least $\sqrt\eps|\bigcup V(C_2)|$ edges from~$x$. We can assume that $x$ has no neighbours outside $C_1\cup C_2$, as otherwise condition~\eqref{cond:4a} from Lemma~\ref{lem:superlemma} holds. 

By~\eqref{maxim} and by symmetry, we can assume that
\begin{equation*}
\deg_{G}(x,\bigcup V( C_1)) \geq (1+\tfrac{\delta}{2})k.
\end{equation*}
In particular, we can again employ~\eqref{all_small} to see that $C_1$ is bipartite, and moreover $x$ has to see both classes of the bipartition.
Therefore, condition~\eqref{cond:4c} from Lemma~\ref{lem:superlemma} holds and the proof is finished. 

\subsection{An approximate version of the  $\frac{2}{3}$-conjecture:\\ the proof of Theorem~\ref{main:2}}\label{sec:2/3}

Given $\delta\in(0,1)$, we set 
	$$ \varepsilon:=\frac{\delta^4}{10^{10}},\mbox{ and } \alpha:=\frac{2}{3}.$$
	Let $N_0,M_0$ be given by~Lemma~\ref{reg:deg}, with input $\varepsilon$, and further input $\eta:=5\sqrt\eps$ and $m_0:=\tfrac{1}{\eps}$, and let $n'_0$ be given by Lemma~\ref{lem:superlemma}, with input $\alpha,\varepsilon$ and $M_0$. We choose  $n_0:=(1-\eps)^{-1}\max\{n'_0,N_0\}+1$ as the numerical output of the theorem. 
	
Let $G$ be an $n$-vertex graph as in Theorem~\ref{main:2}, with $n\ge n_0$, and let $T$ be a $k$-edge tree whose maximum degree is at most $k^{\frac 1c}$, where  $n\ge k\ge \delta n$ and $c=49=18\cdot 3-5$.\\

Let $x\in V(G)$ be a vertex of degree at least $(1+\delta)k$.  Since $n\ge n_0$, we can apply Lemma~\ref{reg:deg} to $G-x$ in order to get a subgraph $G'\subseteq G-x$, with $|G'|\ge (1-\varepsilon)(n-1)$, that admits an $(\varepsilon,5\sqrt{\varepsilon})$-regular partition. Let $R$ be the corresponding  $(\varepsilon,5\sqrt\eps)$-reduced graph of $G'$, we will assume that every component of $R$ is $(k\cdot\tfrac{|R|}{|G'|},\sqrt[4]\eps)$-small. An easy computation shows that
\begin{equation}\label{mindeg:2k3}\delta(G')\ge \left(1+\tfrac{\delta}{2}\right)\frac{2k}{3}\ge (1+100\sqrt[4]\eps)\frac{2k}{3},\end{equation}

because of the minimum degree in $G$. Also, note that $\deg_{G}(x, G')\ge(1+\frac{\delta}{2})k$. Following the same reasoning as in $(\heartsuit)$, and because of the degree of $x$, there is some component $C_1$ of $R$ such that $x$ $\sqrt\eps$-sees $C_1$. 
First, assume that $x$ has more than $(1+2\sqrt[4]\eps)k$ neighbours in $C_1$. Since $C_1$ is small, $C_1$ must be bipartite and $x$ must see at least a $\sqrt\eps$-portion of both sides of the bipartition, namely $A$ and $B$. 
Then, by \eqref{mindeg:2k3} we have $\max\{|A|,|B|\}\ge(1+\sqrt[4]\eps)\frac{2k}{3}\cdot\tfrac{|R|}{|G'|}$ and, therefore, $G'$ satisfies condition~\eqref{cond:3} from Lemma~\ref{lem:superlemma}.

\smallskip 

Now, we may assume that $x$ has less than $(1+2\sqrt[4]\eps)k$ neighbours in $C_1$. As in $(\heartsuit)$, we can calculate that there is a second component $C_2$ containing at least $\sqrt\eps|\bigcup V(C_2)|$ neighbours of $x$. We can assume that $x$ does not send edges to any other component, otherwise we are in case~\eqref{cond:4a} from Lemma~\ref{lem:superlemma}, and are done. 

 Also, by symmetry we can assume that $\deg_{G}(x,\bigcup V(C_1))\geq (1+\frac{\delta}{2})\frac{k}{2}$. Following the same reasoning as before we  conclude that  $|C_1|\ge(1+\frac{\delta}{2})\frac{2k}{3}\cdot \tfrac{|R|}{|G'|}$. In particular, if $C_1$ is nonbipartite, then $G'$ satisfies condition~\eqref{cond:4b} from Lemma~\ref{lem:superlemma} and we are done.
 
  So we may suppose that $C_1$ is bipartite.  If $x$  sees both sides of the bipartition,  condition~\eqref{cond:4c} from Lemma~\ref{lem:superlemma} holds, so let us assume this is not the case. The minimum degree tells us that one of the sides of the bipartition of $C_1$ has size at least $(1+\frac{\delta}{2})\frac{2k}{3}\cdot \tfrac{|R|}{|G'|}$ clusters, and we can argue similarly for the other side of the bipartition. This means that $G'$ satisfies condition~\eqref{cond:4d} from Lemma~\ref{lem:superlemma}, which completes the proof.

\subsection{Embedding constant degree trees:\\ the proof of Theorem~\ref{main:1-2}}\label{sec:Delta}
	
Given $\delta\in(0,1)$ and $\Delta\ge 2$, we set 
	$$ \varepsilon:=\frac{\delta^4{}}{ 10^{10}}\mbox{ and }\alpha:=\frac{1}{2}.$$
	Let $N_0,M_0$ be given by~Lemma~\ref{reg:deg}, with input $\varepsilon,\eta:=5\sqrt\eps$ and $m_0:=\tfrac{1}{\eps}$, and let $n'_0$ be given by Lemma~\ref{lem:superlemma}, with input $\alpha,\varepsilon$ and $M_0$. We choose  $n_0:=(1-\eps)^{-1}\max\{n'_0,N_0\}+1$ as the numerical output of the theorem. 
	
	Let $G$ be an $n$-vertex graph as in Theorem~\ref{main:1-2}, where $n\geq n_0$, and let $T$ be a $k$-edge tree whose maximum degree is bounded by~$\Delta$, where $k$ is such that $n\ge k\ge \delta n$. Let $x\in V(G)$ be a vertex of degree at least $2(\frac{\Delta-1}{\Delta}+\delta)k$. We apply Lemma~\ref{reg:deg} to $G-x$ and we obtain a subgraph $G'$, with $|G'|\ge (1-\varepsilon)(n-1)$, that admits an $(\varepsilon,5\sqrt{\eps})$-regular partition. Let $R$ be the corresponding $(\eps,5\sqrt\eps)$-reduced graph.

Observe that each $k$-edge tree $T$ with maximum degree at most $\Delta$ will satisfy
\begin{equation}\label{bip_bound}
k_1\leq \frac{\Delta-1}{\Delta}k,
\end{equation}
where $k_1$ is the size of the larger bipartition class of $T$. We can discard scenarios~\eqref{cond:1} and~\eqref{cond:2} and therefore assume that
\begin{equation}\label{all_nbip_small}
\text{all nonbipartite components of $R$ are $(k\cdot \tfrac{|R|}{|G'|},\sqrt[4]\eps)$-small,}
\end{equation} 
and, by~\eqref{bip_bound},
\begin{equation}\label{all_bip_small}
\text{all bipartite components of $R$ are $(\tfrac{\Delta-1}{\Delta}k\cdot\tfrac{|R|}{|G'|},\sqrt[4]\eps)$-small.}
\end{equation} 
As we removed only few vertices from $G$, it is clear that $x$ has at least $2(\tfrac{\Delta-1}{\Delta}+\tfrac{\delta}{2})k$ neighbours in $G'$. This, together with~\eqref{all_nbip_small} and~\eqref{all_bip_small}, implies that there are components $C_1$ and $C_2$ of $R$ such that
\begin{equation*}
\text{$\deg_{G}(x,\bigcup V(C_i))\ge \sqrt\eps|\bigcup V(C_i)|$,\ \ for $i=1,2$.}
\end{equation*}
Moreover, we may assume that $x$ does not see any other components, otherwise $G'$ satisfies condition~\eqref{cond:4a} from Lemma~\ref{lem:superlemma} and we are done. First, suppose that $\Delta=2$, that is, $T$ is a path of length $k$. In this case, we  choose a $\tfrac{k}{2}$-separator $z$ of $T$ and embed $z$ into $x$. After that, we can greedily embed each component of $T-z$ into $C_1$ and $C_2$, respectively. 

Now, suppose that $\Delta\ge 3$. By symmetry, we may assume that
\begin{equation}\label{deg_c1}
\deg_{G}(x,\bigcup V(C_1))\geq (\tfrac{\Delta-1}{\Delta}+\tfrac{\delta}{2})k.
\end{equation}
If $C_1$ is nonbipartite, $G'$ satisfies condition~\eqref{cond:4b} from Lemma~\ref{lem:superlemma} as $\Delta\ge 3$. If $C_1$ is bipartite with parts $A$ and $B$, we can employ~\eqref{all_bip_small} together with~\eqref{deg_c1} to conclude that $G'$ satisfies condition~\eqref{cond:4c} from Lemma~\ref{lem:superlemma}. This  concludes the proof.

\section{Conclusion}\label{sec:conj}

\subsection{Constant degree trees}\label{sec:const}

For  trees whose maximum degree is bounded by an absolute constant $\Delta$, we believe that  the bound on the maximum degree of the host graph from Conjecture~\ref{2k,k/2} can be weakened.

\begin{conjecture}\label{conj:const}
Let $k, \Delta\in\mathbb N$, and let $G$ be a graph with $\delta(G)\geq \frac k2$ and $\Delta (G)\geq 2(1-\frac 1\Delta)k$. Then $G$ contains all trees $T$ with $k$ edges with $\Delta (T)\leq \Delta$. 
\end{conjecture}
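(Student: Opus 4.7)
The plan is to follow the framework underlying Theorem~\ref{main:1-2} and Lemma~\ref{lem:superlemma}: choose a vertex $x$ of maximum degree in $G$, pick a cutvertex $z$ of $T$ so that the components of $T-z$ can be grouped conveniently (via Lemma~\ref{lem:num}), embed $z$ into $x$, and then distribute the components of $T-z$ among components of $G-x$ seen by $x$. The main new difficulties compared with Theorem~\ref{main:1-2} are the absence of any $\delta$-slack in either degree condition and the fact that $n$ is not assumed to be large compared to $k$.

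A useful preliminary observation is that any tree $T$ with $k$ edges and maximum degree at most $\Delta$ has its larger bipartition class of size at most $(1-\tfrac{1}{\Delta})k+1$, since the smaller class has at least $k/\Delta$ vertices. Thus the condition $\Delta(G)\geq 2(1-\tfrac{1}{\Delta})k$ says exactly that $x$ has just enough room to accommodate two copies of the larger bipartition class of $T-z$. This suggests the following case split. If $x$ sends edges into at least three distinct components of $G-x$, then Lemma~\ref{lem:num}(i) combined with the minimum degree of $G-x$ should allow us to partition the components of $T-z$ into three groups of total sizes at most $\lceil k/2\rceil$ and embed each greedily into a separate component of $G-x$. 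If $x$ sees only two components $C_1,C_2$, then these components must together contain nearly all of $x$'s neighbourhood, so at least one, say $C_1$, satisfies $\deg(x,C_1)\geq (1-\tfrac{1}{\Delta})k$; one then analyses the bipartite structure of $C_1$ and $C_2$ as in the proof of Lemma~\ref{lem:superlemma}.

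Carrying this out, I would first handle the regime where $n$ is significantly larger than $k$ by applying the regularity lemma to $G-x$ and invoking Lemma~\ref{lem:superlemma}; this step essentially reproduces the proof of Theorem~\ref{main:1-2}. Next I would treat the almost-spanning regime, where $n$ is comparable to $k$: here the inherited minimum degree $\delta(G-x)\geq k/2-1$ is close to half the number of vertices, and a direct greedy embedding guided by a $\tfrac{k}{2}$-separator of $T$ (Lemma~\ref{lem:cut1}), together with Hall-type matching arguments between the bipartition classes of $T-z$ and the components of $G-x$ seen by $x$, should suffice. The two regimes would be linked by a stability argument: either $G$ is structurally close to the tight construction of Example~\ref{example1} (handled by a direct extremal analysis) or $G$ has additional slack so that the approximate version of Theorem~\ref{main:1-2} can be sharpened.

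The main obstacle is removing the $(1+\delta)$-slack that every regularity-based step introduces. I expect the hardest case to be when the two components $C_1,C_2$ seen by $x$ are both bipartite with part sizes close to the extremal ones and the bipartition imbalance of the forest $T-z$ must be matched precisely against these parts. This is exactly the situation realised in Example~\ref{example1}, so no quantitative slack is available, and a refinement of Proposition~\ref{prop:cut5} appears necessary in order to allocate the heavier colour class of $T-z$ correctly across $C_1$ and $C_2$. Handling the residual error terms in the counting arguments without any $\delta$-cushion is likely to require a careful stability- or absorption-style finishing argument, and is the step where I would expect the proof to be most delicate.
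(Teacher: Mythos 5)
This statement is labeled as a \emph{conjecture} in the paper (Conjecture~\ref{conj:const}); the paper does not prove it, and explicitly says only that ``evidence for this conjecture is given by Theorem~\ref{main:1-2}.'' Theorem~\ref{main:1-2} is an approximate version: it requires $(1+\delta)$-slack in both the minimum- and maximum-degree hypotheses, a maximum degree $\Delta$ that is constant, $n\ge n_0(\delta,\Delta)$ large, and $k\ge \delta n$ (a dense host graph). No exact version is claimed or proved anywhere in the paper, and Example~\ref{exampleDelta} only shows that the bounds are ``close to'' best possible, not that they are sharp.

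Your proposal does not close that gap either, and you are candid about this. What you write is a \emph{strategy} that accurately mirrors the machinery the paper uses for Theorem~\ref{main:1-2} (choose a max-degree vertex~$x$, invoke Lemma~\ref{lem:cut1} and Lemma~\ref{lem:num} to cut~$T$ at~$z$ and group the components of $T-z$, embed~$z$ into~$x$, distribute the groups into the components of $G-x$ according to the scenarios in Lemma~\ref{lem:superlemma}), plus a correct preliminary observation about the size of the larger bipartition class of a bounded-degree tree. But every concrete step you list still carries the $(1+\delta)$-slack that the regularity method introduces, and you explicitly flag the key case --- two bipartite components with parts matching the extremal sizes of Example~\ref{example1} --- as the one where ``no quantitative slack is available'' and where you ``would expect the proof to be most delicate.'' That is not a proof, it is a restatement of why the problem is hard. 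In particular, the proposed ``stability- or absorption-style finishing argument'' and the ``refinement of Proposition~\ref{prop:cut5}'' are not supplied; the almost-spanning regime ($n$ comparable to $k$) is also handled only by a vague appeal to ``Hall-type matching arguments'' with no details given, and the paper itself has no such argument you could fall back on.

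So the correct verdict is: the statement is open, the paper only proves the $\delta$-approximate surrogate under additional density and boundedness hypotheses, and your proposal is an outline of the known approximate approach along with an honest description of the obstructions rather than a proof.
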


Evidence for this conjecture is given by Theorem~\ref{main:1-2}.
Note that the bounds on the maximum and minimum degree of $G$ are close to best possible, which can be seen by considering the following example.

\begin{example}\label{exampleDelta}
Let $\Delta\ge 2$ and let $T$ be a tree on an odd number of levels, where the root has degree  $\Delta +1$, also every vertex in an odd level  has degree  $\Delta +1$, and every vertex in an even level, except for the leaves and except for the root, has degree 2. \\
Setting $k:=|T|-1$, we can calculate that there are 
$\frac{\Delta}{\Delta +1}k+1$ vertices in even levels, that is, there are $\frac{\Delta}{\Delta +1}k$ vertices in positive  even levels. (For this, note that we can cover all but one vertex of $T$ by disjoint stars $K_{1,\Delta}$ centered at the vertices from  odd levels.) \\
Consider the complete bipartite graph $H$ with bipartition classes  $A$, $B$ of sizes $|A|=(\frac{\Delta}{\Delta +1})^2k$ and $|B|=\frac{\Delta}{2(\Delta +1)}k$. Take two copies of $H$, and join a new vertex $x$ to each vertex in the larger bipartition classes. The obtained graph~$G$ misses the degree conditions from Conjecture~\ref{conj:const} only by a factor of $\tfrac{ \Delta}{\Delta+1}$. \\
Observe that
$G$ does not contain $T$, because either one of the sets $B$ would have to receive at least half of all vertices in positive even levels of $T$, or one of the sets $A$ would have to receive the root of $T$, plus at least a  $\frac{\Delta}{\Delta +1}$-fraction of all vertices in positive even levels of $T$.
\end{example}

It is easy to see that the tree from Example~\ref{exampleDelta} is the most unbalanced  tree whose maximum degree is bounded by the constant $\Delta$.

\subsection{A variation of the $2k$--$\frac k2$ conjecture}\label{ell}

We believe that a more general statement than the one given in Conjecture~\ref{2k,k/2} should be true. More precisely, if we relax our bound on the maximum degree of the host graph, while at the same time asking for a stronger bound on the minimum degree of the host graph, the conclusion of Conjecture~\ref{2k,k/2} should still hold true. 
Quantitatively speaking, we conjecture the following (see also~\cite{BPS3}).

\begin{conjecture}\label{conj:ell}
Let $k\in\mathbb N$, let $0\le\alpha\le \frac 12$ and let $G$ be a graph with $\delta(G)\geq  (1+\alpha)\frac k2$ and $\Delta (G)\geq 2(1-\alpha)k$. Then $G$ contains all trees with $k$ edges. 
\end{conjecture}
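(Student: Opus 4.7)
The plan is to aim for an approximate version of Conjecture~\ref{conj:ell} first (with $(1+\delta)$-slack in both degree bounds, a density assumption $k\geq\delta'n$, and $\Delta(T)\leq k^{1/c(\alpha)}$), by building a parametrised analogue of the key embedding lemma Lemma~\ref{lem:superlemma}. The scheme of Section~\ref{principal} should carry over cleanly: pick $x\in V(G)$ of near-maximum degree, regularise $G-x$, note that $\delta(R)\gtrsim(1+\alpha)\frac{k}{2}\cdot\frac{|R|}{n}$ by Fact~\ref{fact:2}, and split on whether $R$ has a $(k,\sqrt[4]\eps)$-large component. In the large-component case Proposition~\ref{prop:connected-cte} (whose proof is already written for a generic $\alpha\in[\frac12,1)$) furnishes the embedding directly. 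In the small-component case, the degree accounting $(\heartsuit)$ from Section~\ref{principal}, now using only $\deg(x)\geq 2(1-\alpha)k$ in place of $2k$, still forces $x$ to $\sqrt\eps$-see at least two components, and one reproduces items~\eqref{cond:4a}--\eqref{cond:4e} of Lemma~\ref{lem:superlemma} with all occurrences of ``$\frac23$'' replaced by ``$(1-\alpha)$'' and ``$\frac12$'' by ``$\frac{1+\alpha}{2}$''.

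The technical heart of the reduction would be a strengthened version of Proposition~\ref{prop:cut5}: for every tree $T$ with $k$ edges, find a cutvertex $z$ and a proper $2$-colouring $c:V(T-z)\to\{0,1\}$ with $|c_0|\geq|c_1|$ such that
\begin{equation*}
|c_0|\leq (1-\alpha)k \qquad\text{and}\qquad |c_1|\leq (1+\alpha)\tfrac{k}{2}.
\end{equation*}
These are exactly the sizes required so that the heavier colour class fits into the large side of a bipartite component of $R$ whose larger part has at least $(1-\alpha+\delta)k\cdot\frac{|R|}{n}$ clusters, and the lighter colour class is accommodated by the minimum degree $(1+\alpha)\tfrac{k}{2}\cdot\frac{|R|}{n}$ on those clusters. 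For $\alpha\leq\frac13$ the bound $|c_0|\leq\frac{2k}{3}$ already proved in Proposition~\ref{prop:cut5} is at least as strong as $|c_0|\leq(1-\alpha)k$, so the whole generalisation is essentially cosmetic in this range, and I expect the approximate form of Conjecture~\ref{conj:ell} for $\alpha\in[0,\frac13]$ to be deducible almost verbatim from the present paper by tracking constants.

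For $\alpha\in(\frac13,\frac12]$ the existing cutting proposition is genuinely insufficient: one would need $|c_0|$ comparable to $k/2$, which is impossible by simple parity for very unbalanced trees (e.g.\ caterpillars whose leaves all sit in one colour class), so the single-cutvertex paradigm must be abandoned. Two honest avenues present themselves: either cut $T$ at two carefully chosen cutvertices $z_1,z_2$ and distribute the components of $T-\{z_1,z_2\}$ across the two bipartite components that $x$ touches (invoking a strengthened form of scenario~\eqref{cond:4e} together with a two-parameter version of Lemma~\ref{lem:num} so that colour classes of each cluster group align with the correct bipartition sides), or prove a stability statement reducing to host graphs close to Example~\ref{example1} and tackle those extremal configurations by ad hoc means. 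This is where I expect the main obstacle to lie.

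Finally, even the programme above would only yield the approximate version; the exact statement of Conjecture~\ref{conj:ell}, for every tree and without slack, looks out of reach of a purely regularity-based attack. Removing the bound $\Delta(T)\leq k^{1/c(\alpha)}$ is particularly hard because $c(\alpha)\to\infty$ as $\alpha\to 0$, reflecting a real loss inside Proposition~\ref{prop:connected-cte}; eliminating the density assumption $k\geq\delta' n$ would presumably require an absorbing argument in the spirit of~\cite{RS18a,RS18b}; and handling trees of maximum degree linear in~$k$ appears to need a genuinely new idea, perhaps a more refined analysis of how a single high-degree vertex of $G$ interacts with several small ``almost-extremal'' cluster components simultaneously.
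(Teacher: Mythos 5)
This statement is a conjecture, not a theorem: the paper offers no proof of it, explicitly defers even the approximate version (for bounded degree trees in dense host graphs) to the forthcoming manuscript~\cite{BPS3}, and only records that the conjecture is essentially best possible (Proposition~\ref{prop:ell'}). Your proposal, accordingly, is not a proof and does not claim to be one; as a research programme it correctly identifies the relevant machinery (a parametrised Lemma~\ref{lem:superlemma}, Proposition~\ref{prop:connected-cte}, and a cutting lemma in the spirit of Proposition~\ref{prop:cut5}), and your closing paragraph is a fair assessment of why the exact statement is out of reach of the regularity method.

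There is, however, a concrete misjudgment in where the difficulty lies. The paper itself observes that for $\alpha\in[\tfrac13,\tfrac12]$ Conjecture~\ref{conj:ell} follows from the $\tfrac23$-conjecture, since then $\delta(G)\geq\tfrac{2k}{3}$ and $\Delta(G)\geq k$; correspondingly, the approximate version in that range is already delivered by Theorem~\ref{main:2}. Your claim that $\alpha\in(\tfrac13,\tfrac12]$ is the hard range, requiring two cutvertices or a stability analysis, stems from parametrising the target of the cutting lemma as $|c_0|\leq(1-\alpha)k$, i.e.\ tying the heavier colour class exclusively to the side of a bipartite component whose size is governed by $\Delta(G)$. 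But as $\alpha$ grows the minimum degree $(1+\alpha)\tfrac k2$ takes over: once it exceeds $\tfrac{2k}{3}$, \emph{both} sides of any bipartite component of the reduced graph have at least $(1+\alpha)\tfrac k2\cdot\tfrac{|R|}{n}$ clusters, and the existing bounds $|c_0|\leq\tfrac{2k}{3}$, $|c_1|\leq\tfrac k2$ of Proposition~\ref{prop:cut5} suffice exactly as in scenario~\eqref{cond:3} of Lemma~\ref{lem:superlemma}. No near-balanced colouring is needed, so the parity obstruction you raise does not arise. The genuinely new content of the conjecture is the interpolation for $\alpha\in(0,\tfrac13)$, where neither Theorem~\ref{main:1} nor Theorem~\ref{main:2} applies directly and where the degree accounting in the small-component case (your analogue of $(\heartsuit)$ and of the step forcing scenario~\eqref{cond:4c}) needs the most care: with $\deg(x)\geq 2(1-\alpha)(1+\delta)k$ a single $(k\cdot\tfrac{|R|}{n},\sqrt[4]\eps)$-small bipartite component can in principle absorb all of $N(x)$, so the dichotomy ``large component or $x$ sees two components'' must be reworked, not merely relabelled. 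This is precisely the analysis the authors carry out in~\cite{BPS3} rather than here.
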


Note that for $\alpha= 0$,  the bounds  from Conjecture~\ref{conj:ell} coincide with the bounds from Conjecture~\ref{2k,k/2}. 
For all $\alpha\in [\frac 13, \frac 12]$,  Conjecture~\ref{conj:ell}  follows from the $\frac 23$-conjecture. 
Also,  it is easy to see the conjecture  holds for stars, double-stars and paths (see~\cite{BPS3}).

We will discuss Conjecture~\ref{conj:ell} in more detail in the forthcoming manuscript~\cite{BPS3}. In particular,  we show that the conjecture is asymptotically true for bounded degree trees in dense host graphs, and we show that Conjecture~\ref{conj:ell} is asymptotically best possible in the following sense.

\begin{proposition}\label{prop:ell'}$\!\!${\rm\bf\cite{BPS3}}
For each odd $\ell\in\mathbb N$ with $\ell\ge 3$, and for each $\gamma>0$ there are $k\in \mathbb N$, a $k$-edge tree~$T$, and a graph $G$ with $\delta(G)\geq  (1+\frac 1\ell -\gamma)\frac k2$ and $\Delta (G)\geq 2(1-\frac 1\ell-\gamma)k$ such that $T$ is not a subgraph of $ G$. 
\end{proposition}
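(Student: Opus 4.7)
The plan is to prove Proposition~\ref{prop:ell'} by an explicit construction, generalizing Example~\ref{example1}. The guiding idea is that in Example~\ref{example1}, two disjoint bipartite blocks and one central vertex~$v$ already suffice to rule out the spider tree~$T_k$. For general odd~$\ell\ge 3$ I would replace the two blocks by several vertex-disjoint dense blocks $H_i=K_{a_i,b_i}$ with sides $A_i,B_i$, together with a central vertex~$v$ joined to a prescribed subset of these sides, choosing $a_i,b_i$ so that $\delta(G)$ sits just below $(1+\tfrac1\ell)\tfrac k2$ and $\deg_G(v)$ sits just below $2(1-\tfrac1\ell)k$. As the candidate tree I would take $T=T_k$ from Example~\ref{example1}: a root with $\sqrt k$ star-centre children, each having $\sqrt k-1$ further leaves, so that the bipartition classes of $T$ have sizes $\sqrt k$ and $k-\sqrt k+1$.

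Second, I would verify the degree conditions by direct arithmetic: for vertices in~$A_i$ the degree equals $b_i+1$, for vertices in~$B_i$ it equals $a_i$, and for the central vertex it equals $\sum_i a_i$, so the three inequalities $b_i+1\ge(1+\tfrac1\ell-\gamma)\tfrac k2$, $a_i\ge(1+\tfrac1\ell-\gamma)\tfrac k2$, $\sum_i a_i\ge 2(1-\tfrac1\ell-\gamma)k$ will together fix the parameters (up to the chosen number of blocks), with $k=k(\ell,\gamma)$ taken sufficiently large for rounding to be absorbed.

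Third, I would establish $T\not\subseteq G$ by case analysis on the image of the root~$r$ of~$T$. The possible images are $v$, a vertex in some $A_i$, or a vertex in some $B_i$. In each case the bipartite structure of the $H_i$ forces the $\sqrt k$ star centres into specific sides of specific blocks, and each star centre at a vertex $u\in A_i$ (resp.\ $u\in B_i$) can only place its $\sqrt k-1$ leaves inside $B_i$ (resp.\ inside $A_i$). Since $T$ has roughly~$k$ leaves in total, I then need to reach a counting contradiction: the total capacity of the sides into which the leaves are forced is strictly less than the number of leaves. For parameters chosen right at the boundary of the degree conditions and $k$ large, such an inequality should be arrangeable in every case.

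The hard part is the parameter choice in the first step. The key tension is that forcing $\delta(G)\ge(1+\tfrac1\ell)\tfrac k2>\tfrac k2$ automatically forces each side~$B_i$ to be rather large, which tends to leave too much total capacity $\sum|B_i|$ for the leaves of~$T$ when one tries to place~$r$ at~$v$. In particular, a naive bipartite $\ell$-block construction appears to allow $T_k$ to embed when $\ell$ is small (most visibly at $\ell=3$), so some genuine use of the parity of~$\ell$ is needed. I expect the construction actually used in~\cite{BPS3} to be non-bipartite — most plausibly, some blocks are replaced by a blow-up of the odd cycle~$C_\ell$ with carefully varying part sizes (exploiting that in a $C_\ell$-blow-up the neighbourhood of any vertex is bipartite, constraining how leaves attach), or else $T$ is refined so that its leaves are concentrated in a part of $G$ with smaller total capacity. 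Identifying and verifying the correct such construction for every odd~$\ell$ is where the main work lies; the degree check and the counting contradiction are then routine.
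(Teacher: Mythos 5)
The present paper does not prove Proposition~\ref{prop:ell'}. The statement is imported verbatim from the forthcoming manuscript~\cite{BPS3}, and no construction or argument appears here; so there is no ``paper's own proof'' against which your proposal can be checked. The only hint the paper gives is the footnote accompanying Example~\ref{example1}, which says that already in the limiting case $\alpha=0$ the relevant example is ``very similar to Example~\ref{example1}, but we take a little more care when choosing the number and size of the stars that make up the bulk of the tree $T_k$'' --- that is, the authors adjust the \emph{tree}, whereas most of the effort in your proposal goes into adjusting the host graph while keeping $T=T_k$ fixed.

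Evaluated on its own terms, your proposal is a plan rather than a proof, and you concede as much. The diagnosis you give of the naive approach is correct, and worth making sharper: with the minimum-degree hypothesis $\delta(G)\ge(1+\tfrac1\ell-\gamma)\tfrac k2$, any bipartite block $K_{a_i,b_i}$ attached to $x$ via its $A_i$-side must have $|B_i|\ge(1+\tfrac1\ell-\gamma)\tfrac k2-1$, so two such blocks already provide $\sum_i|B_i|\ge(1+\tfrac1\ell)k-O(\gamma k)$ slots for the pendant leaves. Since $T_k$ has only $k-\sqrt k$ leaves and the embedder may distribute the $\sqrt k$ star centres evenly between the two $A_i$'s, the spider embeds. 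Contrary to your remark that this is ``most visible at $\ell=3$'', it happens uniformly for every odd $\ell\ge 3$ once $k$ is large --- in fact the margin is \emph{largest} at $\ell=3$. One can also check that the obvious nonbipartite variant (two disjoint cliques $K_a$ plus a vertex $x$ adjacent to everything) satisfies the hypotheses of the proposition, but then $\delta(G)=\Delta(G)/2$, so the $\Delta$-constraint forces $a\ge(1-\tfrac1\ell-\gamma)k$, which is so large that the spider again fits for every $\ell>3$ and small $\gamma$. So neither of the two shapes you sketch comes close to working, and the actual construction is evidently more delicate.

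The genuine gap is therefore twofold: you do not exhibit a host graph that simultaneously pins both degree bounds near equality without the blocks becoming absorbent, and you do not exhibit a tree tailored to that host --- and the latter omission is hard to forgive given that the paper itself flags (for $\alpha=0$) that $T_k$ as written does not give the tight bound. Your reaching for $C_\ell$-blow-ups is a reasonable instinct, and you correctly point out that the parity of $\ell$ must play a role somewhere, but neither idea is made precise, let alone verified. Without a concrete $(G,T)$ pair, the ``counting contradiction'' step of your proof cannot be set up, so there is currently no proof, only a correct statement of why the first obvious guess is wrong.
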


In particular, this disproves a conjecture from~\cite{rohzon}.

\subsection{Maximum degree $\frac{4k}{3}$}	\label{4/3}

Host graphs of maximum degree close to $\frac 43k$ have already appeared in the examples given in~\cite{2k3:2016} for the sharpness of the $\frac 23$-conjecture. In~\cite{2k3:2016}, two examples are given: a balanced complete bipartite graph of order $\frac 43k-4$ to which a universal vertex is added, and the union of two disjoint copies of $K_{\frac 23k-1}$, to which a universal vertex is added. These graphs have maximum degree $\frac 43k-4$ and $\frac 43k-2$, respectively, and minimum degree $\frac 23 k-1$. The tree consisting of three stars of order~$\frac k3$, whose centres are joined to a new vertex $v$, does not fit into either of the graphs described above. Let us note that also the tree obtained from taking three paths of order~$\frac k3$ and joining their first vertices to a new central vertex does not fit into the second host graph described above.

It follows from Proposition~\ref{prop:ell'} that requiring a maximum degree of at least $ck$, for any $c<2$ (in particular for $c=\frac{4}{3}$), and a minimum degree of at least  $\frac k2$ is not enough to guarantee all trees with $k$ edges as subgraphs. Nevertheless, graphs that look very much like the graph  from Example~\ref{example1} (or the similar example underlying Proposition~\ref{prop:ell'}) might be the only obstructions to embed a tree with~$k$ edges into a graph of maximum degree greater than $\frac{4k}{3}$ and minimum degree greater than~$\frac{k}{2}$. 
In the forthcoming~\cite{BPS3}, this suspicion is partially confirmed. We show that asymptotically, any bounded degree tree embeds in any host graph of maximum degree greater than $\frac{4k}{3}$ and minimum degree greater than $\frac{k}{2}$, as long as it does not too closely resemble the graph from Example~\ref{example1} or a similar example.

\subsection{Large first and  second neighbourhood}\label{2nd}

A different way of avoiding Example \ref{example1} would be to impose a condition on the size of the second neighbourhood of a large degree vertex. We explore this approach in the present section. 

Let $N_2(x)$ denote the second neighbourhood of vertex $x\in V(G)$.

\begin{conjecture}
Let $k\in\mathbb{N}$, and let $G$ be a graph with $\delta(G)\geq \frac k2$, which has a vertex $x$ such that  $\min\{|N(x)|, |N_2 (x)|\}\geq \frac{4k}3$. Then $G$ contains all trees with $k$ edges. 
\end{conjecture}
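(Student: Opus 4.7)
The plan is to follow the regularity-based framework of Theorems~\ref{main:1} and~\ref{main:2} and, as a first step, to prove an approximate bounded-degree version: for every $\delta>0$ there is $n_0$ so that, for every $n$-vertex $G$ with $n\geq n_0$ and $n\geq k\geq \delta n$, if $\delta(G)\geq (1+\delta)\tfrac{k}{2}$ and some vertex $x$ satisfies $\min\{|N(x)|,|N_2(x)|\}\geq (1+\delta)\tfrac{4k}{3}$, then every $k$-edge tree $T$ with $\Delta(T)\leq k^{1/c}$ embeds in $G$. As in the main theorems, everything reduces to verifying one of the scenarios~\eqref{cond:1}--\eqref{cond:4} of the Key Embedding Lemma~\ref{lem:superlemma} (with $\alpha=\tfrac12$) after regularising $G-x$.

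First, apply the degree form of the regularity lemma to $G-x$, obtaining $G'\subseteq G-x$ with reduced graph $R$. If $R$ has a large component then one of~\eqref{cond:1},~\eqref{cond:2} applies, so assume every component of $R$ is small: each nonbipartite component has at most $(1+\sqrt[4]{\eps})k$ vertices in $G$, and each side of a bipartite component has at most $(1+\sqrt[4]{\eps})k$ vertices in $G$. Because $|N_{G'}(x)|\geq (1+\tfrac{\delta}{2})\tfrac{4k}{3}$, vertex $x$ must $\sqrt{\eps}$-see at least one component of $R$, and if it reaches three or more we invoke~\eqref{cond:4a}. So we analyse the two remaining sub-cases.

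If $x$ $\sqrt{\eps}$-sees only one component $C$, then $|N(x)\cap V(C)|\geq \tfrac{4k}{3}$ rules out $C$ being nonbipartite by smallness, so $C$ is bipartite with parts $A,B$. Were $x$ to miss one side, essentially all of $N_2(x)$ inside $C$ would be confined to the other side; accounting also for the $O(\eta n+\eps n)$ slack coming from vertices removed during regularisation and from low-density edges within the unseen side, we would obtain $|N_2(x)|<\tfrac{4k}{3}$, contradicting the hypothesis. Hence $x$ reaches both $A$ and $B$, and since $|A|,|B|\geq \tfrac{2k}{3}$ we are in scenario~\eqref{cond:3}. If instead $x$ $\sqrt{\eps}$-sees exactly two components $C_1,C_2$, the subcases are handled analogously: a nonbipartite, large $C_i$ gives~\eqref{cond:4b}, $x$ reaching both sides of a bipartite $C_i$ gives~\eqref{cond:4c}, and in the remaining subcase where $x$ sees only one side of one or both bipartite components, the second-neighborhood hypothesis again forces the unseen sides to contain at least $\tfrac{2k}{3}$ vertices (after possibly relabelling $A_i\leftrightarrow B_i$), exactly matching~\eqref{cond:4d} or~\eqref{cond:4e}.

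The principal obstacle is removing the bounded-degree restriction on $T$, since Lemma~\ref{lem:superlemma} is tailored to $\Delta(T)\leq k^{1/c}$: trees with a vertex of very high degree would have to be embedded by first placing such vertices directly into $N(x)\cup\{x\}$ and then recursing on the residue, which is a genuinely different challenge. Secondary hurdles are sharpening the approximate constant to the conjectured $\tfrac{4k}{3}$ (our regularity approach inherits a multiplicative $(1+\delta)$ slack, removable only by a stability analysis around Example~\ref{example1}) and controlling the error terms in the $N_2(x)$ bound above, which requires $\eta,\eps\ll\delta$; extending to sparse host graphs beyond the dense regime $k\geq\delta n$ appears substantially more delicate still.
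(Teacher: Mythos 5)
The statement itself is a conjecture; what the paper actually proves is the approximate bounded-degree version, Theorem~\ref{main:3}, and your proposal reconstructs exactly that argument: regularise $G-x$, case-split on how many components of $R$ are $\sqrt\eps$-seen by $x$, and match scenarios~\eqref{cond:3}, \eqref{cond:4a}--\eqref{cond:4e} of Lemma~\ref{lem:superlemma}. This is essentially the paper's proof. One small slip: when $x$ $\sqrt\eps$-sees only one component $C$ and misses a side of its bipartition, your inference that $|N_2(x)|<\tfrac{4k}{3}$ does not follow, since $N_2(x)$ can spill into further components that receive only a tiny fraction of $N(x)$ yet lie almost entirely in $N_2(x)$. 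The paper closes that sub-case using $|N(x)|$ alone: $\deg_{G'}(x,\bigcup V(C))\geq (1+\tfrac\delta2)\tfrac{4k}{3}-\sqrt\eps n$ exceeds the size of either part of a small bipartite $C$, forcing $x$ to $\sqrt\eps$-see both sides and putting us in scenario~\eqref{cond:3}; the second-neighbourhood hypothesis is genuinely needed only in your remaining two-component case, which you handle as the paper does.
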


We can prove an approximate version of this for bounded degree trees, and dense host graphs.

\begin{theorem}\label{main:3} Let $\delta \in(0,1)$. There exist $k_0\in \NN$ such that for all $n,k\ge k_0$ with $n\ge k\ge \delta n$ the following holds for every $n$-vertex graph $G$ with $\delta(G)\geq(1+\delta)\frac{k}{2}$ and every  $k$-edge tree $T$ with $\Delta(T)\le k^{\frac{1}{67}}$.\\ If there is a vertex $x\in V(G)$ with $\min\{|N(x)|, |N_2(x)|\}\geq(1+\delta)\frac{4k}{3}$, then $T$ embeds in $G$.
\end{theorem}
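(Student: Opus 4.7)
The plan is to follow the strategy of Sections~\ref{principal} and~\ref{sec:2/3}: regularise $G-x$, identify one of the scenarios (I)--(IV) of Lemma~\ref{lem:superlemma} in the reduced graph, and then invoke the lemma. I would set $\varepsilon := \delta^4/10^{10}$ and $\alpha := 1/2$, and apply Lemma~\ref{reg:deg} to $G-x$ to produce a subgraph $G'\subseteq G-x$ with an $(\varepsilon, 5\sqrt\varepsilon)$-regular partition and reduced graph $R$ of bounded order, so that $\delta(G') \ge (1+\sqrt[4]\varepsilon)k/2$ and both $|N(x)\cap V(G')|$ and $|N_2(x)\cap V(G')|$ are at least $(1+\sqrt[4]\varepsilon)4k/3$.

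If some component of $R$ is $(k\cdot|R|/n,\sqrt[4]\varepsilon)$-large nonbipartite, or $(k_1\cdot|R|/n,\sqrt[4]\varepsilon)$-large bipartite (where $k_1$ is the size of the larger bipartition class of $T$), scenarios (I) or (II) apply and we are done. So I may assume every nonbipartite component of $R$ has fewer than $(1+\sqrt[4]\varepsilon)k$ vertices, and every bipartite component has each side of size less than $(1+\sqrt[4]\varepsilon)k$. The crucial step is to show that $x$ must $\sqrt\varepsilon$-see at least two components of $R$. Were $x$ to $\sqrt\varepsilon$-see only one component $C_1$, then $N_2(x)\cap V(G')$ would consist of vertices inside $\bigcup V(C_1)\setminus N(x)$ (reached by $G'$-edges, which are confined to $C_1$ since components of $R$ are mutually non-adjacent in $G'$) together with vertices outside $\bigcup V(C_1)$ reached from $N(x)$ by ``dropped'' edges of $G$ (sitting in cluster pairs of density at most $\eta=5\sqrt\varepsilon$). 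The first set has size at most $2(1+\sqrt[4]\varepsilon)k - (1+\sqrt[4]\varepsilon)4k/3 \le 2k/3 + o(k)$, and the total number of cross-component dropped edges in $G$ is bounded by $\eta n^2/4$. Together with junk of size at most $\varepsilon n$, this should give $|N_2(x)|< (1+\delta)4k/3$, contradicting the hypothesis.

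Once $x$ is known to $\sqrt\varepsilon$-see two components $C_1, C_2$ (and to have no edges to a third, else scenario (IV)(a) applies), I would carry out a case analysis closely mirroring the one in the proof of Theorem~\ref{main:2}. Without loss of generality $\deg(x, \bigcup V(C_1)) \ge (1+\sqrt[4]\varepsilon)2k/3$. If $C_1$ is nonbipartite, then $|\bigcup V(C_1)|\ge(1+\sqrt[4]\varepsilon)2k/3$, giving (IV)(b); if $C_1$ is bipartite with $x$ seeing both sides, (IV)(c) applies. In the remaining case $C_1$ is bipartite with parts $A_1, B_1$ and $x$ sees only $A_1$, so $|A_1| \ge (1+\sqrt[4]\varepsilon)2k/3$. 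A further application of the second-neighbourhood bound, using that $N_2(x)\cap V(G')$ is essentially contained in $\bigcup B_1$ together with whatever portion of $V(C_2)$ is reachable from $N(x)\cap V(C_2)$, forces either $|B_1|$ or (when $C_2$ is bipartite with $x$ seeing only one side $A_2$) $|B_2|$ to be at least $(1+\sqrt[4]\varepsilon)2k/3$, yielding scenario (IV)(d) or (IV)(e). The subcases in which $C_2$ is nonbipartite and large, or $x$ sees both sides of $C_2$, resolve directly via (IV)(b) or (IV)(c).

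The hard part will be the careful accounting of the dropped-edge contribution to the second neighbourhood in the step where one argues that $x$ $\sqrt\varepsilon$-sees at least two components. Since $\eta$ is fixed in terms of $\delta$ while $n$ may be as large as $k/\delta$, the crude bound $\eta n^2/4$ on cross-component dropped edges scales as $k^2$, which can in principle overshoot the slack between $2k/3$ and $(1+\delta)4k/3$ for very large $k$. Closing this gap, either by choosing $\varepsilon$ with a sharper dependence on $\delta$ so that $\eta$ is small enough relative to $\delta^2/k$ in the regime considered, or by exploiting the fact that a genuine abundance of such dropped edges itself produces enough structure (for instance, forcing $x$ to effectively see a further component and thereby triggering scenario (IV)(a)), is the main obstacle to turning the above sketch into a complete proof.
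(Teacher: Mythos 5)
Your overall strategy --- regularise $G-x$, argue that the reduced graph $R$ must realise one of scenarios (I)--(IV) of Lemma~\ref{lem:superlemma}, and then apply the lemma --- is exactly the paper's, and your case analysis once $x$ is known to $\sqrt\eps$-see exactly two components $C_1,C_2$ (working through (IVb), (IVc), (IVd), (IVe)) matches the paper's as well.

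The obstacle you single out as the main difficulty, however, does not arise in the paper. You attempt to deduce that $x$ must $\sqrt\eps$-see two components from a bound on $|N_2(x)|$, which forces you to control vertices reached via dropped edges and junk --- a count that, as you observe, can scale quadratically in $n$. The paper instead deduces this from the \emph{first}-neighbourhood bound $|N(x)|\ge(1+\delta)\frac{4k}{3}$ alone, exactly as in the proofs of Theorems~\ref{main:1} and~\ref{main:2}: if $x$ $\sqrt\eps$-sees only one component $C_1$, then $\deg_G(x,\bigcup V(C_1))\ge(1+\delta/2)\frac{4k}{3}$, which (after ruling out (I) and (II)) forces $C_1$ to be bipartite and $(\frac{2k}{3}\cdot\frac{|R|}{n},\sqrt[4]\eps)$-large, with $x$ $\sqrt\eps$-seeing both sides --- precisely scenario (III), which you omitted from your preliminary list of discarded scenarios. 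The first-neighbourhood count is insensitive to dropped edges; only the junk set of size $\eps n\ll\delta k$ must be subtracted, so the quadratic error term never appears. The hypothesis on $N_2(x)$ is used only in the subsequent step, once $x$ is known to see exactly two components, to force one of $B_1,B_2$ to be large and thereby reach scenario (IVd) or (IVe). There the paper, like your sketch, quietly sets aside the dropped-edge corrections by asserting $N_2(x)\subseteq\bigcup V(C_1\cup C_2)$, so your instinct that some bookkeeping is required is not misplaced --- it is just relevant at a different, and less delicate, point of the argument than the one you were contemplating.
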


\begin{proof}
Given $\delta\in(0,1)$, set $\eps:=\frac{\delta^4}{10^{10}} \  \mbox{ and } \ \alpha:=\frac{1}{2}.$
	Let $N_0,M_0$ be given by~Lemma~\ref{reg:deg}, with input $\varepsilon$, $\eta:=5\sqrt{\eps}$ and $m_0:=\frac{1}{\eps}$. Let $n'_0$ be given by Lemma~\ref{lem:superlemma}, with input $\alpha$, $\varepsilon$ and $M_0$. Choose $n_0:=(1-\eps)^{-1}\max\{n'_0,N_0\}+1$ as the numerical output of the theorem.
	
	Now, let $G$, $x\in V(G)$ and $T$ be given as in Theorem~\ref{main:3}.
	We apply Lemma~\ref{reg:deg} to $G-x$, obtaining subgraph $G'$ which admits an $(\varepsilon,5\sqrt{\varepsilon})$-regular partition, with corresponding  $(\varepsilon,5\sqrt\eps)$-reduced graph $R$. Note that  
	$$\delta(R)\geq (1+\frac{\delta}2)\frac{k}{2}\cdot\frac{|R|}{|G'|}\ge(1+100\sqrt[4]\eps)\frac{k}{2}\cdot\frac{|R|}{|G'|}.$$
 
We quickly deduce that $x$  $\sqrt\eps$-sees two components $C_1$ and $C_2$ of $R$, and does not see any other component, as otherwise we are in  one of scenarios~\eqref{cond:1}, ~\eqref{cond:2}, ~\eqref{cond:3}, or~\eqref{cond:4a} from Lemma~\ref{lem:superlemma}, and can thus  embed $T$. In particular, we have $N_2(x)\subseteq \bigcup V(C_1\cup C_2)$.

Symmetry allows us to assume that $\deg_{G}(x,\bigcup V(C_1))\ge \deg_{G}(x,\bigcup V(C_2))$. In particular, $\deg_{G}(x,\bigcup V(C_1))\ge (1+\frac{\delta}{2})\frac{2k}{3}$. If we are in neither of scenarios~\eqref{cond:4b} or~\eqref{cond:4c} from Lemma~\ref{lem:superlemma}, then
 $C_1$ is bipartite, say with parts $A_1$ and $B_1$, and  $x$ sees only one side of $C_1$, say $A_1$. 
 
Now, if $|B_1|\ge (1+\sqrt[4]\eps)\frac{2k}{3}\cdot\frac{|R|}{|G'|}$, we are in scenario~\eqref{cond:4d} from Lemma~\ref{lem:superlemma}, and can  embed $T$. 
So assume otherwise. Then,  $C_2$ contains at least $(1+\sqrt[4]\eps)\frac{2k}{3}$ vertices from  $N_2(x)$.
If we are in neither of scenarios~\eqref{cond:4b} or~\eqref{cond:4c} from Lemma~\ref{lem:superlemma}, then
 $C_2$ is bipartite, say with parts $A_2$ and $B_2$. Furthermore, $x$ does not see $B_2$ and $|B_2|\ge (1+\sqrt[4]\eps)\frac{2k}{3}\cdot\tfrac{|R|}{|G'|}$.
If $|A_2|\ge (1+\sqrt[4]\eps)\frac{2k}{3}\cdot\tfrac{|R|}{|G'|}$, we are in scenario~\eqref{cond:4d} from Lemma~\ref{lem:superlemma}. Otherwise, we are scenario~\eqref{cond:4e} from Lemma~\ref{lem:superlemma}, which concludes our proof.	 	
\end{proof}
\medskip

\subsection{Supersaturation}\label{supersat}
Given a graph $F$, the function $\ex(n,F)$ is defined as the maximum number an $n$-vertex graph can have without containing a copy of $F$.
 The supersaturation phenomenon is that once a graph on $n$ vertices has substantially more than $\ex(n,F)$ edges, a large number of copies of $F$ appear. In our context,  the Erd\H os--S\'os conjecture would imply that 
$$\ex(n,{T}_k)= \tfrac{1}{2}(k-1)n,$$
 where ${T}_k$ is any fixed $k$-edge tree. In view of our Theorem~\ref{thm:ESap}, the only question we can answer here is how many copies of $T_k$ the theorem guarantees, where $T$ is of linear size and bounded degree. We can see from Lemma~\ref{lem:T1}, that if we are embedding $t$ vertices into an $(\varepsilon,5\sqrt{\varepsilon})$-regular pair, then the number of ways one can embed those vertices is at least
 
 \begin{equation*}\label{sup:regpair}2^{t}\varepsilon^{t}\left(\frac{n}{\ell}\right)^t\ge \left(\frac{2\varepsilon}{M_0}\right)^{t}n^{t},\end{equation*}
 where $\ell$ and $M_0$ are as in Lemma~\ref{reg:deg}.
So, given $\delta>0$, given a graph $G$ with $e(G)\ge (1+\delta)\tfrac{k}{2}n$ and $n\ge n_0$, and given any $k$-edge tree $T$ with bounded maximum degree,  one can easily deduce from the proof of Theorem~\ref{thm:ESap}
 that~$G$ contains at least
 $c^{k+1}n^{k+1}$
 copies of $T$, where $c$ is a very small constant that only depends on $\delta$. 
 
 Similarly, given $\delta>0$ and an $n$-vertex graph $G$ as in Theorem~\ref{main:1}, one can deduce that the number of copies of any $k$-edge tree $T$ with bounded maximum degree is at least $C^kn^k,$ where $C$ is a very small constant depending on $\delta$. (The exponent drops to $k$ because we  map a cutvertex $z$ of $T$ into some maximum degree vertex of $G$. So, if the number of vertices of degree at least $(1+\delta)2k$ in $G$ is not linear in $n$,  we cannot expect as many copies of $T$ as in the situation of Theorem~\ref{thm:ESap}.)

\subsection{Extremal graphs}\label{resil}

What happens to our results if the minimum/average/maximum degree of a graph is slightly below the thresholds given in the theorems? Can we still embed any $k$-edge tree $T$ of bounded maximum degree, and if not, can we describe the structure of $G$?

It is possible to deduce\footnote{See the forthcoming~\cite{BPS2} for a more precise argument.}
 from the proofs of Propositions~\ref{prop:bipartite}, \ref{prop:non bipartite} and \ref{prop:connected-cte} that if the minimum degree of $G$ is close to the bound $\frac k2$, then we are still able to embed the tree $T$, unless our graph $G$ has the following structure: each component of $G$ has size less than $k$, or every large component of $G$ is the disjoint union of sets $A$ and $B$ such that $A$ is almost independent and almost all edges are present between $A$ and $B$. In the latter case, we know that furthermore, either $B$ is almost independent, and both $A$ and $B$ are smaller than $k$, which means we are not able to embed  trees that are very unbalanced, or  $B$ has size less than $\tfrac k2$, in which case we cannot embed  trees that are almost perfectly balanced.  
 
\smallskip

\paragraph{Extremal graphs for Theorem~\ref{thm:ESap}.}
For Theorem~\ref{thm:ESap} (our approximate version of the Erd\H os--S\'os conjecture), one can prove that the components of~$G$ are either almost complete and of size roughly $k$, or bipartite and as  described above. 

The two situations correspond to the classical examples for the sharpness of the Erd\H os--S\'os conjecture: the union of~$\frac nk$ complete graphs on~$k$ vertices (if~$k$ does not divide $n$, add in a smaller complete graph); a complete bipartite graph with bipartition classes of size almost $k$; and a complete graph on~$n$ vertices from which all edges inside a set~$A$ have been deleted, where $|A|=n-\lfloor\frac k2\rfloor +1$ (note that the second example has slightly less edges).

\smallskip

\paragraph{Extremal graphs for Theorems~\ref{main:1} and~\ref{main:1-2}.}
For Theorem~\ref{main:1} (our approximate version of Conjecture~\ref{2k,k/2}), we arrive at the following situation if we cannot embed $T$ as planned. Any maximum degree vertex $x$ of $G$  sees exactly two components $C_1, C_2$ of $G'-x$. Both $C_1, C_2$ are bipartite and $x$ sees only one side of each  $C_i$. This is precisely the situation from Example~\ref{example1} (although $G$ might have more components that are simply too small).

For Theorem~\ref{main:1-2} (the variant of Theorem~\ref{main:1} for constant degree trees), we arrive at an example similar to the one  from Example~\ref{example1}, with the size of the sets slightly adjusted. 

\smallskip

\paragraph{Extremal graphs for Theorem~\ref{main:2}.}
For Theorem~\ref{main:2} (our approximate version of the $\frac 23$-conjecture), it might again happen that $G$ consists of components that are each to small to accommodate~$T$. 

It might also happen that any maximum degree vertex $x$ sees both sides $A$ and~$B$ of a bipartite component $C$ of $G'-x$, and both $A$ and $B$ are smaller than~$\frac 23k$. This is exactly the situation described in one of the examples from~\cite{2k3:2016} for the sharpness of  the $\frac 23$-conjecture (a complete bipartite graph with sides of size $\frac 23k -2$ plus a universal vertex).

If that is not the case, then any maximum degree vertex $x$ sees at least two components $C_1, C_2$ of $G'-x$, and if it sees a third component, the embedding succeeds. It might now happen that $x$ sees two almost complete components of size almost $\frac 23k$, and we cannot embed $T$. This corresponds to the second example from~\cite{2k3:2016} for the sharpness of  the $\frac 23$-conjecture (two complete  graphs  of order $\frac 23k -1$ plus a universal vertex).

However, it might also happen that $C_1$ is bipartite, with bipartition classes $A_1$ and $B_1$, such that one of these classes is slightly smaller than $\frac 23k$, and such that $x$ sees only one side of the bipartition, say $A_1$. In this case, it might again be impossible to embed $T$. 

The latter situation is described in the following example. Note that this example is  quite similar to Example~\ref{example1}, but  allows for two different shapes of the second component. 

\begin{example}\label{example:2/3}
Assume that $k$ is divisible by $3$. For $i=1,2$, let $C_i=(A_i, B_i)$ be a complete bipartite graph with $|A_i|=\frac 23k-2$ and $|B_i|=\frac 23k-3$. Let $C'_2$ be a complete graph with $|C'_2|=\frac 23k-1$. \\
Let $G$ be obtained from $C_1\cup C_2$ by adding a new vertex $x$ adjacent to all of $A_1\cup A_2$. Let $G'$ be obtained from $C_1\cup C_2'$ by adding a new vertex $x$ adjacent to all of $A_1\cup V(C'_2)$. \\
Then the tree $T$ obtained from taking three stars of order $\frac k3$ and adding a new vertex adjacent to their centres, does not embed in either of $G$, $G'$.
\end{example}

Note that if we take the sizes of the sets $A_i$ in Example~\ref{example:2/3} to be $\frac 23k-3$ instead of $\frac 23k-2$, then also the tree
 $T'$, obtained from taking three stars of order $\frac k3$ and adding a new vertex adjacent to one leaf from each of the stars,  does not embed in either of $G$, $G'$.

The reader might object that Example~\ref{example:2/3} does not really apply to the situation from Theorem~\ref{main:2}, as the tree $T$ from the example has maximum degree $\frac k3$. However, we can modify the tree a  bit in order to comply with the maximum degree requirement, keeping it as unbalanced as possible, and maintaining the property that it consists of three subtrees of the same order plus a vertex that joins them. Then Example~\ref{example:2/3} still works, after adjusting the sizes of the sets $A_i$ and $B_i$.

\bibliographystyle{acm}
\bibliography{trees}

\end{document}